\RequirePackage{rotating}
\documentclass{amsart}
\usepackage{amscd}
\usepackage{amssymb}
\usepackage{rotating,times}

\usepackage{color}
\usepackage{booktabs}
\usepackage{bm}

\usepackage[colorlinks=true, pdfstartview=FitV, linkcolor=blue, pagebackref, citecolor=blue, urlcolor=blue]{hyperref}

\makeatletter
\@namedef{subjclassname@2020}{\textup{2020} Mathematics Subject Classification}
\makeatother

%

\newtheorem{thm}[equation]{Theorem}
\newtheorem{lem}[equation]{Lemma}

\newtheorem{cor}[equation]{Corollary}
\newtheorem{prop}[equation]{Proposition}

\newtheorem*{thm*}{Theorem}
\newtheorem*{prop*}{Proposition}
\newtheorem*{cor*}{Corollary}
\newtheorem*{lem*}{Lemma}
\newtheorem*{MT*}{Main Theorem}

\newtheorem*{ques*}{Question}
\newtheorem{conjecture}[equation]{Conjecture}

\theoremstyle{definition} %
\newtheorem{defn}[equation]{Definition}
\newtheorem*{defn*}{Definition}

\newtheorem{eg}[equation]{Example}

\theoremstyle{remark} %
\newtheorem{rmk}[equation]{Remark}

\newtheorem*{rmk*}{Remark}
\newtheorem*{rmks*}{Remarks}

%

\newcommand{\red}{\mathrm{red}}
\newcommand{\lieE}{\mathfrak{e}}
\newcommand{\lieF}{\mathfrak{f}}

\DeclareMathOperator{\rank}{rank}
\DeclareMathOperator{\car}{char}
\DeclareMathOperator{\Lie}{Lie}
\DeclareMathOperator{\Aut}{Aut}
\DeclareMathOperator{\Hom}{Hom}

\newcommand{\galpha}{\bm{\alpha}}

\newcommand{\domega}{\mathrm{d}\omega}

\DeclareMathOperator{\im}{im}
\DeclareMathOperator{\Trans}{Trans}

\newcommand{\Aff}{\mathbb{A}}

\renewcommand{\P}{\mathbb{P}}
\newcommand{\C}{\mathbb{C}}
\newcommand{\Z}{\mathbb{Z}}

\newcommand{\gbar}{\overline{g}}

\newcommand{\lsub}{\mathfrak{h}}

\newcommand{\hst}{\tilde{\alpha}}

\newcommand{\qform}[1]{{\left\langle{#1}\right\rangle}}                   
\DeclareMathOperator{\PGL}{PGL}
\DeclareMathOperator{\SL}{SL}
\DeclareMathOperator{\Sp}{Sp}
\DeclareMathOperator{\HSpin}{HSpin}

\DeclareMathOperator{\SO}{SO}
\DeclareMathOperator{\PSp}{PSp}

\DeclareMathOperator{\GSp}{GSp}
\newcommand{\gsp}{\mathfrak{gsp}}

\DeclareMathOperator{\GL}{GL}
\DeclareMathOperator{\Ad}{Ad}

\DeclareMathOperator{\Spin}{Spin}

\newcommand{\F}{\mathbb{F}}

\newcommand{\g}{\mathfrak{g}}
\newcommand{\gt}{\tilde{\g}}
\newcommand{\gb}{\bar{\g}}

\newcommand{\n}{\mathfrak{n}}
\newcommand{\gl}{\mathfrak{gl}}
\renewcommand{\sl}{\mathfrak{sl}}

\newcommand{\pgl}{\mathfrak{pgl}}

\newcommand{\so}{\mathfrak{so}}

\newcommand{\tor}{\mathfrak{t}}

\DeclareMathOperator{\Spec}{Spec}

\newcommand{\z}{\mathfrak{z}}

\newcommand{\Gm}{\mathbb{G}_m}
\newcommand{\Ga}{\mathbb{G}_a}

\newcommand{\drho}{\mathrm{d}\rho}

\newcommand{\dpi}{\mathrm{d}\pi}

\newcommand{\la}{\lambda}
\newcommand{\ot}{\otimes}

\newcommand{\eps}{\varepsilon}

\newcommand{\stcolvec}[2]{\left( \begin{smallmatrix} #1 \\ #2 \end{smallmatrix} \right) }

\newcommand{\stbtmat}[4]{\left( \begin{smallmatrix} #1&#2 \\ #3&#4 \end{smallmatrix} \right)}

\newcommand{\Gt}{\widetilde{G}}
\newcommand{\Tt}{\widetilde{T}}
\newcommand{\Tb}{\overline{T}}
\newcommand{\Sb}{\overline{S}}

\newcommand{\Gb}{\bar{G}}

\DeclareMathOperator{\Alt}{Alt}

\DeclareMathOperator{\PSU}{PSU}
\DeclareMathOperator{\PSL}{PSL}

\newcommand{\kx}{k^\times}

\newcommand{\sgp}{G_*}

%
\newcommand\attopp[2]{\genfrac{}{}{0pt}{}{#1}{#2}}

\newcommand\esevenrt[7]{\textstyle{\attopp{
\hbox to4pt{$\hfil\scriptstyle{#1}\hfil$}
\hbox to4pt{$\hfil\scriptstyle{#3}\hfil$}
\hbox to4pt{$\hfil\scriptstyle{#4}\hfil$}
\hbox to4pt{$\hfil\scriptstyle{#5}\hfil$}
\hbox to4pt{$\hfil\scriptstyle{#6}\hfil$}
\hbox to4pt{$\hfil\scriptstyle{#7}\hfil$}}
{\raise2pt\hbox to4pt{$\hfil\scriptstyle{#2}\hfil$}
\raise2pt\hbox to4pt{$\hfil\scriptstyle{\phantom{0}}\hfil$}}}}
\newcommand\eeightrt[8]{\textstyle{\attopp{
\hbox to4pt{$\hfil\scriptstyle{#1}\hfil$}
\hbox to4pt{$\hfil\scriptstyle{#3}\hfil$}
\hbox to4pt{$\hfil\scriptstyle{#4}\hfil$}
\hbox to4pt{$\hfil\scriptstyle{#5}\hfil$}
\hbox to4pt{$\hfil\scriptstyle{#6}\hfil$}
\hbox to4pt{$\hfil\scriptstyle{#7}\hfil$}
\hbox to4pt{$\hfil\scriptstyle{#8}\hfil$}}
{\raise2pt\hbox to4pt{$\hfil\scriptstyle{#2}\hfil$}
\raise2pt\hbox to4pt{$\hfil\scriptstyle{\phantom{0}}\hfil$}
\raise2pt\hbox to4pt{$\hfil\scriptstyle{\phantom{0}}\hfil$}}}}



\numberwithin{equation}{section}

\begin{document}

\title{Generic stabilizers for simple algebraic groups}

\author[S. Garibaldi]{Skip Garibaldi}
\author[R.M. Guralnick]{Robert M. Guralnick}

\thanks{The second author (RG) was partially supported the NSF grant DMS-1901595 and a Simons Foundation Fellowship 609771.}

\begin{abstract}
We prove a myriad of results related to the stabilizer in an algebraic group $G$ of a generic vector in a representation $V$ of $G$ over an algebraically closed field $k$.  Our results are on the level of group schemes, which carries more information than considering both the Lie algebra of $G$ and the group $G(k)$ of $k$-points.  For $G$ simple and $V$ faithful and irreducible, we prove the existence of a stabilizer in general position, sometimes called a principal orbit type.  We determine those $G$ and $V$ for which the stabilizer in general position is smooth, or $\dim V/G < \dim G$, or there is a $v \in V$ whose stabilizer in $G$ is trivial.
\end{abstract}
%

\subjclass[2020]{20G05 (primary); 14L24 (secondary)}

\maketitle

\setcounter{tocdepth}{1}
\tableofcontents

\section{Introduction}

The aim of this paper is to prove, for an algebraically closed field $k$ of arbitrary characteristic, analogs of results that are known  in the case $k = \C$ concerning an irreducible representation $V$ of a simple linear algebraic group $G$.

The first such result concerns the existence of a \emph{stabilizer in general position} (s.g.p.) also known as a \emph{principal orbit type} for $G$ acting on $V$.  For $v \in V$, write $G_v$ for the fixer of $v$ in $G$; it is a closed sub-group-scheme of $G$.  One says the action of $G$ on $V$ has an s.g.p.\ if there is a closed sub-group-scheme $G_*$ of $G$ and a dense open subset $U$ of $V$ such that for all $u \in U(k)$, there is a $g \in G(k)$ such that $g G_u g^{-1} = G_*$.  We prove in \S\S \ref{sgp.sec}--\ref{sgp.pf}:

\begin{thm} \label{sgp}
Every irreducible representation $V$ of a simple algebraic group $G$ over an algebraically closed field $k$ has an s.g.p.  If $\dim V > \dim G$, the s.g.p.\ $G_*$ is a finite group scheme.  If $\dim V > \dim G$ and $\ker [G \to \GL(V)]$ is central in $G$, then the identity component of $G_*$ is contained in a torus and $\Lie(G_*)$ is a toral subalgebra of $\Lie(G)$.
\end{thm}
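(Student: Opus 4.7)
My plan is to establish the three claims sequentially, with the finiteness statement (2) being the technical heart.

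For the existence of an s.g.p., I consider the closed subscheme $Z=\{(v,g)\in V\times G:g\cdot v=v\}$ projecting to $V$, whose fiber over $v$ is $G_v$. Upper semi-continuity of $v\mapsto\dim G_v$, interpreted at the scheme level, yields a dense open $V^\circ\subseteq V$ on which $\dim G_v$ is minimal. Over $V^\circ$ the family of stabilizers has bounded complexity (e.g.\ fixed Hilbert polynomial as subschemes of $G$), and maps into a suitable Hilbert scheme of closed subgroup schemes of $G$. Because $G_{g\cdot v}=gG_vg^{-1}$, the family consists of finitely many $G$-conjugacy classes; the one whose union of conjugates is dense in $V^\circ$ serves as $G_*$. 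The delicate point, distinguishing the statement from its analogue for reduced groups, is to track infinitesimal structure and verify conjugacy at the level of group schemes.

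For finiteness of $G_*$ when $\dim V>\dim G$, set $L=G_*^\circ$ and suppose for contradiction $\dim L>0$. Since every $v$ in a dense open $U\subseteq V$ has $G_v$ conjugate to $G_*$, we have $U\subseteq G\cdot V^{G_*}\subseteq G\cdot V^L$, so the morphism $G/N_G(L)\times V^L\to V$ is dominant, yielding
\[
\dim V^L \;\ge\; \dim V-\dim G+\dim N_G(L).
\]
The task is to combine this estimate with the irreducibility of $V$ and an analysis of the action of $N_G(L)/L$ on $V^L$ to force a contradiction with $\dim V>\dim G$. In characteristic zero, Matsushima's theorem (reductivity of stabilizers of closed orbits) together with Luna's \'etale slice theorem does the job cleanly; in positive characteristic these must be replaced, either by a direct group-scheme argument or by reduction to the classification of pairs $(G,V)$ with positive-dimensional generic stabilizer (Andreev--Vinberg--Elashvili). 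I expect this adaptation to be the main obstacle.

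Granted (2), the toral statement in (3) reduces to showing $\Lie(G_*)$ contains no nonzero $p$-nilpotent element. Such an element generates a nontrivial connected infinitesimal unipotent subgroup scheme $U\subseteq G_*^\circ$. The hypothesis that $\ker(G\to\GL(V))$ is central, combined with the fact that the centre of a simple group is of multiplicative type (contained in every maximal torus) and hence has toral Lie algebra, means the kernel contains no nontrivial unipotent subgroup scheme. So it suffices to prove $U$ acts trivially on $V$: one combines (i) the inclusion $V^U\supseteq V^{G_*}$ with the dimension estimate from (2) applied to $U$ in place of $L$, and (ii) the structural observation that a nontrivial infinitesimal unipotent subgroup scheme of the simple $G$ cannot preserve such a large subspace of an irreducible $V$ with $\dim V>\dim G$. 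Once $U$ is trivial, $\Lie(G_*)$ is toral, so $G_*^\circ$ is diagonalizable; any diagonalizable subgroup scheme of the reductive $G$ is contained in some maximal torus (unique up to $G(k)$-conjugacy), completing the proof.
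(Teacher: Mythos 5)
Your proposed existence argument contains a step that is actually false in positive characteristic, not merely delicate. From $G_{g\cdot v} = gG_vg^{-1}$ you conclude that the stabilizers over a dense open subset ``consist of finitely many $G$-conjugacy classes''; this does not follow (conjugacy of stabilizers within one orbit says nothing about stabilizers of points in different orbits), and it genuinely fails: Example \ref{no.sgp} in this paper exhibits a semisimple $G$ and irreducible $V$ in characteristic $2$ where the generic stabilizers all have the same type, hence the same Hilbert polynomial, yet are pairwise non-conjugate, so no s.g.p.\ exists. Any argument along your lines that does not use simplicity of $G$ in an essential way would prove the existence of an s.g.p.\ in that example too, so it cannot be repaired by bookkeeping in a Hilbert scheme. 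Richardson's characteristic-zero proof avoids this via the \'etale slice theorem, which is unavailable here. The paper instead proceeds by classification: by Theorem \ref{summ.thm} (resting on \cite{GurLawther1}, \cite{GG:large}, \cite{GG:irred}, \cite{GG:special}, \cite{luebeck}), the generic stabilizer is trivial outside an explicit finite list, and each remaining case is handled individually --- via Lemma \ref{smooth.sgp} when $\g_v=0$ and $G(k)_v$ has an s.g.p., via the $\theta$-group analysis of \S\ref{vinberg.sec}, via the infinitesimal-stabilizer computations of \S\ref{inf.sec}, and via Lemmas \ref{adj.eg} and \ref{F4} and the results on $\P(V)$ and spin representations for the small cases.

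The later parts of your proposal defer to exactly the results whose proof is the substance of the theorem. Finiteness of $G_*$ when $\dim V > \dim G$ is Theorem \ref{summ.thm}\eqref{summ.big}; your dimension count $\dim V^L \ge \dim V - \dim G + \dim N_G(L)$ is a correct starting point but, as you concede, closing it in positive characteristic requires either the classification or estimates of the form $\dim V^x + \dim x^G < \dim V$ for all relevant $x \in \g$, which is precisely the content of the cited papers (and, as noted after Lemma \ref{GL.cor4}, the naive Lie-algebra analogue of such estimates is false, e.g.\ for $W \otimes W^{[p]}$). Similarly, your step (ii) for torality --- that a nontrivial infinitesimal unipotent subgroup scheme cannot fix a subspace of codimension $\le \dim G - \dim N_G(L)$ in an irreducible $V$ --- is asserted, not proved, and is again a statement of the same classification-dependent kind. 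In the paper the torality claim is obtained by inspecting the finitely many non-smooth cases, where $G_*^\circ$ is computed explicitly to be $\mu_p^r$ or $\mu_n$ inside a maximal torus, and the non-faithful case is reduced to the faithful one via Lemmas \ref{simple.quo} and \ref{kernel} together with the bijection $T \leftrightarrow T/N$ between maximal tori of $G$ and $G/N$.
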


In case $\car k = 0$, Richardson proved the existence of an s.g.p.\ under weaker hypotheses, e.g., in case $G$ is merely assumed to be reductive \cite[Th.~A]{Richardson:prin}.  However, that claim can fail when $\car k \ne 0$, see Example \ref{no.sgp}.  (There are additional, related results for $\C$ that do not hold for $k$ of prime characteristic, see Example \ref{not.cofree}.)  Because of this, it is no surprise that the arguments used here when $\car k \ne 0$ are of a fundamentally different nature.  We rely on recent results proved in  \cite{GurLawther},  \cite{GG:large}, \cite{GG:irred}, and \cite{GG:special}, see \S\ref{summ.sec} for a summary.

The previous work showed that, apart from an explicit list of cases, the stabilizer $G_v$ of a generic $v \in V$ is the trivial group scheme, in which case the s.g.p.\ trivially exists.  The proof of Theorem \ref{sgp} involves analyzing the many remaining cases.    Along the way, we determine the s.g.p.\ as a group scheme in almost all cases.

We also prove a result about when $G_*$ is commutative, see \S\ref{commutative.sec}.

We mention that when an s.g.p.\ $G_*$ exists, one obtains as a consequence that the natural map in fppf cohomology $H^1_{\mathrm{fppf}}(k, N_G(G_*)) \to H^1_{\mathrm{fppf}}(k, G)$ is surjective, see \cite[Cor.~4.5]{LoetscherMacD}.  This provides in turn an upper bound on the essential dimension of $G$.   We do not pursue this avenue here.

\subsection*{Smoothness} Another feature that appears when $\car k$ is prime is that the group scheme $G_v$ need not be smooth.  We call out those cases where it happens in the following result, proved in \S\ref{smooth.sec}.  In the statement, the expression ``for generic $v \in V$" means that there is a dense open subset $U$ of $V$ such that the statement holds for all $v \in U(k)$ (in this case, that $G_v$ is smooth).
 
\begin{thm} \label{stab.smooth}
Let $V$ be a faithful and irreducible representation of a simple algebraic group $G$ over an algebraically closed field $k$.  If it is not the case that $G_v$ is smooth for generic $v \in V$, then up to graph automorphism $(G, \car k, V)$ appears in Table \ref{meta.big} or $(G, \car k, V) = (G_2, 2, L(\omega_2))$.
\end{thm}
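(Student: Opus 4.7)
The plan is to reduce the theorem to a finite case analysis using the classification from \cite{GurLawther1,GG:large,GG:irred,GG:special} summarized in \S\ref{summ.sec}. That classification enumerates all triples $(G,\car k,V)$ with $V$ a faithful irreducible representation of a simple algebraic group $G$ for which the generic stabilizer is non-trivial; outside that list the generic stabilizer is the trivial group scheme, which is smooth, so the theorem is vacuous there. Because Theorem \ref{sgp} already determines the s.g.p.\ $G_*$ as a group scheme in each remaining case, the problem reduces to a pointwise check of smoothness for finitely many explicit representations.

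The check rests on the criterion that an affine group scheme of finite type over a field is smooth if and only if its dimension equals the dimension of its Lie algebra. Writing $\rho\colon G\to\GL(V)$ for the representation and $\rho_v\colon G\to V$ for the orbit map at $v$, one has $\Lie(G_v)=\ker\mathrm{d}(\rho_v)_e$, so $\dim\Lie(G_v)=\dim G-\dim\mathrm{d}\rho(\Lie G)(v)$, whereas $\dim G_v=\dim G-\dim(G\cdot v)$. Since an orbit is a smooth subvariety of $V$, for generic $v$ smoothness of $G_v$ is equivalent to $\dim\mathrm{d}\rho(\Lie G)(v)=\dim(G\cdot v)$, so failure of smoothness is detected by the presence of elements of $\Lie G$ fixing $v$ beyond the Lie algebra of the reduced stabilizer.

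I would then loop through each triple on the classification list and, using the explicit description of $G_*$ from Theorem \ref{sgp}, either verify directly that $\dim\Lie(G_*)=\dim G_*$, placing the triple among the smooth cases, or exhibit an additional element of $\Lie(G)_v$ for generic $v$, placing it among the exceptions. The main obstacle is the small-characteristic phenomena responsible for non-smoothness, which differ in flavour case by case: degeneration of an invariant bilinear form in characteristic $2$, the presence of a non-trivial centre in $\Lie G$ absent from $G$, Frobenius-twist coincidences between submodules of $V$, and analogous pathologies. The exceptional case $(G_2,2,L(\omega_2))$ arises from characteristic-$2$ coincidences in the representation theory of $G_2$ that produce a generic Lie-algebra stabilizer strictly larger than $\Lie G_*$. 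Handling these requires explicit root-space computations case by case, and verifying that the resulting list of non-smooth triples coincides exactly with Table \ref{meta.big} together with the $G_2$ entry is where the bulk of the work lies.
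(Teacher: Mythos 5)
Your outline matches the paper's proof: reduce via the classification (Theorem \ref{summ.thm}) to the adjoint case $L(\hst)$ plus Tables \ref{meta.small} and \ref{meta.big}, then test smoothness entry by entry using the criterion $\dim \g_v = \dim G(k)_v$, i.e.\ separability of the orbit map. Two caveats. First, your appeal to Theorem \ref{sgp} as ``already determining $G_*$ as a group scheme in each remaining case'' is circular for exactly the cases that matter here: for the entries of Table \ref{meta.small} with $\dim V/G \le 1$ the proof of Theorem \ref{sgp} only establishes \emph{existence} of the s.g.p.\ (via the open orbit in $\P(V)$), and the determination of $\g_v$ versus $G(k)_v$ for those representations ($\wedge^3 k^7$, the minuscule modules of $E_6$ and $E_7$, $\wedge^3 k^6$, $\wedge^3 k^8$, the natural module of $G_2$, etc.) is precisely the content of the smoothness proof, carried out in the paper by root-space computations, structural arguments about $\theta$-groups and extraspecial parabolics, and computer checks in a few bad characteristics. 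Second, since the statement permits any entry of Table \ref{meta.big} as an exception, no smoothness verification is needed there at all; the whole burden falls on $L(\hst)$ (where Lemma \ref{adj.eg} gives identity component a maximal torus, hence smoothness) and on Table \ref{meta.small}, which is where the single genuine exception $(G_2, 2, L(\omega_2))$ appears, with $\dim \g_v = 9$ against $\dim G(k)_v = 8$. Your plan is correct in structure, but all of the substance of the theorem lives in the case analysis you defer.
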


We were surprised to find that there was an example with $V/G = \Spec k$ yet the generic stabilizer is non-smooth, namely the representation of $G_2$ mentioned in theorem, see Lemma \ref{G2} below.

One could weaken the hypothesis ``faithful'' in the theorem.  Let $N$ be the kernel of $G \to \GL(V)$.  The quotient $G/N$ acts faithfully on $V$ (see \S\ref{back.sec}) and is simple (Lemma \ref{simple.quo}), so Theorem \ref{stab.smooth} applies to it.  If $(G/N)_v$ is smooth (as given by Theorem \ref{stab.smooth}) and $N$ is smooth (a hypothesis to replace ``faithful''), then $G_v$ is smooth because $G_v/N = (G/N)_v$ \cite[Prop.~1.62]{Milne:AG}.

\subsection*{Rings of invariants} For a representation $V$ of $G$, the quotient $V/G$ in the sense of Rosenlicht is defined to be $\Spec k[V]^G$, where $k[V]^G$ is the ring of $G$-invariant functions on $V$.  In case $G$ is reductive, $k[V]^G$ is a finitely generated $k$-algebra \cite[Th.~2]{Sesh:GR}, and it has dimension $\dim V - \dim G + \dim G_v$ for generic $v \in V$.  

Combining the determination of $G_v(k)$ from \cite{GurLawther} with information about the possibilities for $\dim V$ from \cite{luebeck}, we can determine all cases where $\dim V/G$ is ``small''.  In the following result, $L(\hst)$ is the irreducible representation of $G$ with highest weight the highest root $\hst$.  This result is proved in \S\ref{few.sec}.

\begin{thm} \label{kVG}  
Suppose $V$ is a faithful and irreducible representation of a simple algebraic group $G$ over an algebraically closed field $k$.  If $\dim k[V]^G < \dim G$, then $V = L(\hst)$ or $(G, \car k, V)$ belongs to Table \ref{meta.small}, \ref{meta.big}, or \ref{meta.few}, up to graph automorphism.
\end{thm}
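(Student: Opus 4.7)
The plan is to turn the hypothesis into an explicit dimension bound on $V$ and then enumerate. Because $G$ is simple (hence reductive), the formula recalled in the paragraph immediately preceding the theorem gives $\dim k[V]^G = \dim V - \dim G + \dim G_v$ for generic $v \in V$. Thus the hypothesis $\dim k[V]^G < \dim G$ is equivalent to the strict inequality
\[
\dim V + \dim G_v < 2 \dim G,
\]
which in particular forces $\dim V < 2 \dim G$.

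With this explicit dimension bound in hand, I would invoke L\"ubeck's tables \cite{luebeck}, which list every irreducible representation $V$ of every simple $G$ whose dimension is at most a certain fixed multiple of $\dim G$; the bound $\dim V < 2 \dim G$ cuts L\"ubeck's list down to a finite, explicit collection of triples $(G, \car k, V)$ to analyze. The representation $V = L(\hst)$, which up to center is the adjoint representation, satisfies $\dim V \le \dim G$ and therefore automatically meets the hypothesis, so it appears in the conclusion of the theorem and may be set aside.

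For each remaining candidate $(G, \car k, V)$, I would read off $\dim G_v$ for generic $v \in V$ from \cite{GurLawther1} together with the companion papers \cite{GG:large}, \cite{GG:irred}, \cite{GG:special} surveyed in \S\ref{summ.sec}; these determine the generic stabilizer as a group scheme, not merely its $k$-points, which is the notion relevant to the dimension formula above. Substituting the resulting $\dim G_v$ into the inequality $\dim V + \dim G_v < 2 \dim G$ then decides whether the triple belongs in Table \ref{meta.small}, \ref{meta.big}, or \ref{meta.few}, or should be discarded. Triples where $G_v$ is trivial generically contribute to the conclusion exactly when $\dim V < 2\dim G$, and those with positive-dimensional $G_v$ require the sharper comparison.

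The main obstacle I expect is bookkeeping rather than conceptual. Classical groups of small rank in small characteristic contribute the bulk of the entries in L\"ubeck's enumeration, including Frobenius twists, spin and half-spin representations, and representations whose generic stabilizer is non-smooth (recall Theorem \ref{stab.smooth}), so that $\dim G_v$ must be extracted from the scheme-theoretic data rather than from $G_v(k)$ alone. Carrying out the verification type-by-type---linear, symplectic, and orthogonal groups, then the five exceptional types---and consistently using the group-scheme version of $G_v$ is where the effort lies; the conceptual input is entirely supplied by the dimension formula together with the already-assembled tables of \cite{GurLawther1} and \cite{luebeck}.
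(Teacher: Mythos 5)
Your opening reduction is essentially the paper's: the formula $\dim k[V]^G = \dim V - \dim G + \dim G_v$ turns the hypothesis into $\dim V + \dim G_v < 2\dim G$, and since Theorem \ref{summ.thm} already places every faithful irreducible $V$ with $\dim V \le \dim G$ into the conclusion (it is $L(\hst)$ or lies in Table \ref{meta.small}) and shows that $G_v$ is finite whenever $\dim V > \dim G$, the whole theorem comes down to listing the faithful irreducibles with $\dim G < \dim V < 2\dim G$. (Your plan to feed the exact value of $\dim G_v$ back into the inequality is superfluous for the stated conclusion: once $\dim V > \dim G$ the generic stabilizer is finite, so the condition is exactly $\dim V < 2\dim G$, and for $\dim V \le \dim G$ the triple is in the conclusion regardless.)

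The gap is in the enumeration step. L\"ubeck's tables do not list ``every irreducible representation whose dimension is at most a fixed multiple of $\dim G$''; they list those with \emph{restricted} highest weight. When $\dim V \le \dim G$ the highest weight is automatically restricted (this is precisely how \cite{luebeck} is invoked in the proof of Theorem \ref{summ.thm}), but in the window $\dim G < \dim V < 2\dim G$ this fails: Steinberg tensor products such as $L(\omega_1 + p^e\omega_1)$ and $L(\omega_1+p^e\omega_\ell)$ for type $A_\ell$, $L(\omega_1+p^e\omega_1)$ for $\Sp_4$, and $L(p^e\omega_1+\omega_2)$ for $A_3$ all land in this range and occupy rows of Tables \ref{meta.big} and \ref{meta.few}. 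Taken literally, your plan would never produce these entries and, more to the point, could not certify that the resulting list is complete. The paper closes this with a separate argument: write $\la = \sum_e p^e\la_e$ with each $\la_e$ restricted, so that $\dim V = \prod_e \dim L(\la_e)$; letting $m$ denote the minimal dimension of a nontrivial restricted irreducible of $G$, one checks $m^3 > 2\dim G$ for every type and $m^2 > 2\dim G$ except in types $A_\ell$ and $C_2$, which forces at most two nonzero $\la_e$ and pins down the short list of non-restricted possibilities. Some control of Steinberg tensor products along these lines is needed before the bookkeeping you describe can begin.
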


\subsection*{Regular orbits}
We also consider when a simple algebraic group acting on an irreducible
module has a regular orbit, i.e., when there exists a vector whose stabilizer is trivial.  Note that a necessary condition for there to exist a regular
orbit is that the stabilizer of a generic vector is finite, for otherwise the dimension of any stabilizer is positive.  In characteristic zero, it turns out that this is also sufficient.  The following result is proved in \S\ref{sec:regular}.

\begin{thm} \label{t:regorbit}   
Suppose that $V$ is a faithful and irreducible representation of a simple algebraic group $G$ over a field $k$ of characteristic $p \ne 2, 3, 5$.  Then exactly one of the following possibilities occurs:
\begin{enumerate}
\renewcommand{\theenumi}{\alph{enumi}}
\item \label{reg.reg} there is some $v \in V$ such that the stabilizer $G_v$ is the trivial group scheme;  
\item \label{reg.dim} $\dim V \le \dim G$;
\item \label{reg.SL4} $p$ is odd, $G = \SL_4$, and up to graph automorphism $V = L(\omega_1 + p^e \omega_2)$ for some $e \ge 1$; or
\item  \label{reg.usual} $p \ne 0$,  $G$ is a quotient of $\SL_{\ell +1}$, and up to graph automorphism $V=L(\omega_1 + p^e \omega_1)$ or $L(\omega_1 + p^e \omega_\ell)$ for some $e \ge 1$.
\end{enumerate} 
\end{thm}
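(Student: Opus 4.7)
The strategy is to combine Theorem \ref{sgp} with the explicit classifications summarized in \S\ref{summ.sec} to cut the problem down to a short list of pairs $(G, V)$, and then argue case by case. Throughout one works over $\kalg$ and descends $k$-rational vectors at the end using Zariski density.

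First, the reduction. If $\dim V \le \dim G$ we are in case (b). Otherwise $\dim V > \dim G$, and since $V$ is faithful the kernel of $G\to\GL(V)$ is trivial, hence central; Theorem \ref{sgp} therefore produces a s.g.p.\ $G_*$ that is a finite group scheme with $\Lie(G_*)$ toral. If $G_* = 1$, a generic vector has trivial stabilizer and we are in case (a). So we may assume $G_*$ is a nontrivial finite group scheme.

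Second, invoking the classifications of \cite{GurLawther1}, \cite{GG:large}, \cite{GG:irred}, and \cite{GG:special} summarized in \S\ref{summ.sec}, combined with the hypothesis $\car k \ne 2,3,5$, confines the triples $(G, \car k, V)$ with $G_*$ nontrivial finite to a short explicit list. The Steinberg-type tensor products $L(\omega_1 + p^e \omega_j)$ singled out in (c) and (d)---concretely $V\otimes V^{[e]}$, $V\otimes V^{*,[e]}$, and (for $\SL_4$) $V\otimes (\wedge^2 V)^{[e]}$---form a distinguished subfamily that must be treated separately.

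Third, for each case on this list outside (c) and (d), we exhibit $v \in V$ with $G_v = 1$, placing it in (a). Typically $v$ is chosen as a sum of weight vectors, or is read off from the concrete realization of $V$ (as an alternating or symmetric power, as the adjoint module, or as a small tensor product with a natural module). Since each candidate $G_*$ is a specific small group scheme of known structure, verifying that some cleverly chosen $v$ escapes every nontrivial conjugate is a concrete finite check; the restriction $\car k \ne 2,3,5$ keeps the number of exceptional rows manageable.

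Fourth, and the main obstacle: for cases (c) and (d) we must show no $v$ has trivial stabilizer. Each of these representations has the form $V_1 \otimes V_2^{[e]}$ for (near-)natural modules of $\SL_{\ell+1}$, and identifying $V_1\otimes V_2^{[e]}$ with a space of matrices, forms, or wedges recasts the stabilizer equation as a twisted conjugacy condition of the shape $g X (g^{(e)})^* = X$. A Lang--Steinberg-style solvability argument over $\kalg$ (the insertion of the $e$-th Frobenius twist making the equation always solvable by a nontrivial element) produces a nontrivial $g \in G_X$ for every $X$, so no regular orbit exists in these cases. The delicate point is the uniformity of this argument across \emph{all} $X \in V$, including the non-invertible ones, which is what makes the exceptions in (c) and (d) genuinely disjoint from (a) rather than merely generically so.
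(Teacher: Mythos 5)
Your skeleton (reduce to the case $\dim V > \dim G$, invoke Theorem \ref{summ.thm} to confine the non-generically-free cases to Table \ref{meta.big}, then argue row by row) matches the paper's, but both substantive steps are missing or wrong. For the rows where a regular orbit must be produced, ``a concrete finite check'' that a sum of weight vectors ``escapes every nontrivial conjugate'' is not an argument: triviality of $G_v$ as a group scheme requires both $G_v(k)=1$ and $\g_v=0$, and neither is a finite verification. The paper supplies actual mechanisms: for the symmetric-square representations of $P\Omega_n$ it takes $v$ to be a symmetric nilpotent matrix $A$ with minimal polynomial of degree $n$, whose centralizer in $M_n(k)$ is $k[A]$ and hence meets the skew-symmetric matrices and the orthogonal group only in scalars (Lemma \ref{l:regorbitO}); for the five remaining rows it uses the $\theta$-group structure, taking $v$ a regular nilpotent element of the overgroup $H$ lying in a nontrivial eigenspace of $\theta$, so that in good characteristic (this is exactly where $p \ne 2,3,5$ enters) $C_H(v)$ is abelian of dimension $\rank H$ and $\theta$ acts on it without fixed points, forcing $\g_v=0$, while $G_v(k)=1$ is quoted from \cite[3.1.1, 3.1.2]{GurLawther1}. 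Nothing in your outline produces these facts.

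Your treatment of the exceptional cases is also partly incorrect. In case \eqref{reg.SL4} the generic stabilizer is the infinitesimal group scheme $\mu_{p^e}$ (Proposition \ref{SL4.cases}), so $G_v(k)=1$ for generic $v$; a Lang--Steinberg argument manufacturing a nontrivial $k$-point of $G_X$ for \emph{every} $X$ therefore cannot succeed, and the correct obstruction is that $\g_v \ne 0$ generically, hence $\g_{v'} \ne 0$ for every $v'$ by upper semicontinuity as in \eqref{usc.ineq}. In case \eqref{reg.usual} your twisted-conjugacy heuristic is plausible for invertible $X$, but you concede the non-invertible case is open; the paper closes it simply by citing \cite[Prop.~3.1.8]{GurLawther1}. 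Finally, you never check that \eqref{reg.dim} excludes \eqref{reg.reg} (immediate from $\dim G_v > 0$ generically plus semicontinuity), which is needed for the ``exactly one'' in the statement.
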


Note that $\dim V \ge \dim G$ is an obvious necessary condition for the existence of a regular orbit.  The theorem says that the condition is also sufficient, apart from a few exceptions, namely the cases $\dim V = \dim G$, \eqref{reg.SL4}, and \eqref{reg.usual}.

\subsection*{Other results}
In addition to the results described so far, we also provide some other applications, such as a result relating the generic stabilizer for $G$ on $V$ with the generic stabilizer for $G$ on a section of $V$ (see \S\ref{section.sec}) and a shorter proof of the classification of groups with the same invariants from \cite{GG:simple} (see Theorem \ref{same.thm}).

\subsection*{Acknowledgements}
The results in this paper weave together and rest on several recent papers, including \cite{GurLawther}.  Although Ross Lawther is not listed here as a co-author, this work would not have been possible without his contributions.  We also thank David Stewart and a referee for their valuable comments on an earlier version of this article.

\section{Notation and background}  \label{back.sec}
Throughout this paper, we assume that $k$ is an algebraically closed field.  We consider algebraic groups over $k$ in the sense of \cite{Milne:AG}, i.e., as affine group schemes of finite type over $k$.  Sometimes we write group scheme when it seems important to do so for clarity of exposition.  

For an algebraic group $G$ and any commutative $k$-algebra $R$, we put $G(R) := \Hom_k(k[G], R)$, the set of $R$-points of $G$; it is an ``abstract'' or ``ordinary'' group.  The algebraic group $G$ is \emph{finite} if it is finite as a scheme over $k$, which holds if and only if $G(k)$ is a finite group.  We say that $G$ is \emph{commutative} if $G(R)$ is abelian for every $k$-algebra $R$.

We also consider the Lie algebra of $G$, which we denote by $\Lie(G)$ or $\g$.  Note that $G$ is \emph{smooth} if and only if $\dim G = \dim \g$, that $G$ is \emph{\'etale} if and only if $\g = 0$, and $G$ is the trivial group scheme $\Spec k$ if and only if $\g = 0$ and $G(k) = 1$.  We put $G^\circ$ for the connected component of the identity in $G$.  It is itself an algebraic group, and $\Lie(G^\circ) = \Lie(G)$.

The algebraic group $G$ is \emph{semisimple} if it is smooth and connected and has no smooth connected solvable normal subgroups other than $1$.  It is \emph{simple} (Milne says ``almost-simple'') if it is semisimple and every proper normal algebraic subgroup is finite.

We say that an action $\rho \!: G \to \Aut(V)$ is  \emph{faithful} if $\ker \rho$ is the trivial group scheme.  Our main results are for a faithful and irreducible representation $V$ of a simple algebraic group $G$.  We view the hypothesis ``faithful'' as harmless.  Indeed,
suppose  $N$ is a normal algebraic subgroup of $G$ that is contained in the kernel of $\rho$.  Then there is a quotient map $q \!: G \to G/N$ \cite[Th.~5.14]{Milne:AG} and a unique morphism $\bar{\rho} \!: G/N \to \Aut(V)$ so that $\rho = q \bar{\rho}$ and $\ker \bar{\rho} = (\ker \rho)/N$ \cite[Th.~5.13, Th.~5.39]{Milne:AG}.  In particular, the induced action $G/\ker \rho \to \Aut(V)$ is faithful.  Moreover, when $G$ is simple, so is $G/N$:

\begin{lem} \label{simple.quo}
If $N$ is a proper normal sub-group-scheme of a simple algebraic group $G$, then $G/N$ is also simple.
\end{lem}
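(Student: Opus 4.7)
The plan is to exploit the correspondence between normal sub-group-schemes of $G/N$ and normal sub-group-schemes of $G$ that contain $N$. First observe that, because $G$ is simple and $N$ is a proper normal sub-group-scheme, $N$ must be finite; hence the quotient map $q \!: G \to G/N$ is faithfully flat with finite kernel. Consequently $G/N$ inherits smoothness and connectedness from $G$ and satisfies $\dim(G/N) = \dim G > 0$, so in particular $G/N$ is nontrivial.

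Next I would verify the two conditions in the definition of ``simple.'' For the finiteness of proper normal sub-group-schemes, let $M$ be a proper normal sub-group-scheme of $G/N$. Then the preimage $q^{-1}(M)$ is a normal sub-group-scheme of $G$ containing $N$, and $q^{-1}(M) = G$ would force $M = G/N$; so $q^{-1}(M)$ is proper in $G$. By simplicity of $G$, $q^{-1}(M)$ is finite, and hence $M = q^{-1}(M)/N$ is finite as well.

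For the condition that $G/N$ has no smooth connected solvable normal sub-group-scheme other than $1$, suppose $H$ is such a subgroup. If $H$ is proper, the previous paragraph shows $H$ is finite; a smooth connected finite group scheme is trivial, giving $H = 1$. If instead $H = G/N$, then $G/N$ is solvable; but $G$ is semisimple and so perfect ($G = [G,G]$), and perfectness passes to quotients, so $G/N$ is also perfect. A perfect solvable group must be trivial, contradicting $\dim(G/N) > 0$.

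The argument is fairly direct, so there is not much of an obstacle to overcome. The main subtlety is keeping the group-scheme technicalities straight — in particular, that smoothness and connectedness descend along $q$ even when $N$ itself is not smooth, and that the bijective correspondence between normal sub-group-schemes of $G/N$ and those of $G$ containing $N$ is valid at the group-scheme level. Both of these are standard consequences of the results in \cite{Milne:AG} on quotients of algebraic groups by normal sub-group-schemes.
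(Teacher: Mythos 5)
Your proof is correct and follows essentially the same route as the paper: quotients of smooth connected groups are smooth and connected, and the correspondence theorem shows preimages of proper normal sub-group-schemes are proper normal (hence finite) in $G$. You additionally spell out the verification that $G/N$ is semisimple (via perfectness of semisimple groups), a detail the paper's proof leaves implicit; this is a harmless and correct elaboration.
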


\begin{proof}
The quotient $G/N$ is smooth and connected because $G$ is \cite[Cor.~5.26, Prop.~5.59]{Milne:AG}.  The inverse image $H$ in $G$ of a proper normal sub-group-scheme of $G/N$ is a proper normal subgroup of $G$ containing $N$, so $H$ is finite and its image $H/N$ in $G/N$ is finite.
\end{proof}

In the setting of group schemes, $N$ can be ``large''.  For example,
suppose a representation $\rho \!: G \to \GL(V)$ is obtained as a Frobenius twist of another representation.  Then $\rho$ is not faithful, because $\drho \!: \g \to \gl(V)$ is zero;  $\ker \rho$ contains the first Frobenius kernel $G_1$.  In this case, $G/G_1$ is isomorphic to $G$  \cite[I.9.5]{Jantzen}.

Near the end of this section, we provide another example, where $G = \Sp_{2\ell}$, $\car k = 2$, and $\rho$ is the spin representation.

\subsection*{Stabilizers} For a representation $\rho \!: G \to \GL(V)$ and $v \in V$, we write $G_v$ for the closed sub-group-scheme of $G$ with $R$-points
$G_v(R) = \{ g \in G(R) \mid \rho(g)v = v \}$ for every commutative $k$-algebra $R$, the stabilizer of $v$ in the ordinary group $G(R)$.  The Lie algebra $\Lie(G_v)$ is the annihilator of $v$ in $\g$, denoted $\g_v$:
\[
\g_v = \{ x \in \g \mid \drho(x)v = 0 \}.
\]

The following is a well-known example in the special case $k = \C$, see for example  \cite[23.1]{Gurevich} or \cite[\S1]{AndreevPopov}.

\begin{eg}[binary cubics] \label{char3}
Take $G = \SL_2$ and $V$ the space of binary cubics, i.e., homogeneous polynomials of degree 3 in variables $x$, $y$, over a field $k$ of characteristic $\ne 2$.  A generic vector $v \in V$ is one that vanishes on three distinct lines in $k^2$, such as $v = xy(x-y)$.  One computes that $G_v$ is the group scheme $\Z/3$.  (Compare the case $\la = 3$ in Example \ref{A1} below.)

The element $w := x^2 y$ is not in the orbit of $v$.  One can compute directly that $G_w = 1$, i.e., the $G$-orbit of $w$ is regular.  (Compare Theorem \ref{t:regorbit} in the introduction.)

Focus now on the special case where $\car k = 3$.  Then $V$ is reducible with socle $L(3)$, consisting of cubes of linear forms, and head $L(1)$ spanned by the images of $x^2 y$, $xy^2$.  The semisimplification $V'$ of $V$ is isomorphic to $L(1) \oplus L(1)^{[3]}$.  The stabilizer in $G$ of a generic vector in the natural module $L(1)$ is a 1-dimensional unipotent subgroup $U$, so the stabilizer in $G(k)$ of a generic $v' \in V'$ is an intersection $U(k) \cap U(k)^g$ for a generic $g \in G(k)$, i.e., $G_v(k) = 1$.  On the other hand, the Lie algebra $\g$ acts trivially on $L(1)^{[3]}$, so $\g_{v'} = \g_v = \Lie(U)$.  In summary, $G_{v'}$ is infinitesimal whereas $G_v$ is \'etale.
\end{eg}

\begin{eg}[Diagonalizable groups]
If $G$ is a diagonalizable group scheme, then the stabilizer of every generic $v \in V$ is the kernel of $G \to \GL(V)$.  Therefore the s.g.p.\ exists trivially.

Suppose now merely that the identity component of $G$ is a torus.  In this case the s.g.p.\ need not be the kernel of the action.  (This contradicts \cite[\S7.2, Prop.]{PoV}, whose statement is surely a typo.)  Take for example $V$ to be the natural representation of $\GL_2$ and $G$ to be the normalizer of the diagonal matrices, so $G \cong \Gm \wr \Z/2$.  A  vector $\stcolvec{x}{y} \in V$ with $xy \ne 0$ has stabilizer $\Z/2$ in $G$ with non-identity element $\stbtmat{0}{x/y}{y/x}{0}$.  Choosing another vector in $V$ of the same type and not in the span of the first, we find a stabilizer that is distinct from but $G$-conjugate to the stabilizer of $\stcolvec{x}{y}$.  In this case the s.g.p.\ exists, is not the kernel of the action, and is only determined by $(G, V)$ up to conjugacy class.
\end{eg}

\subsection*{Irreducible representations}
Recall that every irreducible representation $V$ of $G$ has a highest weight $\la$.  Write $\la$ as a sum $\la = \sum_\omega c_\omega \omega$ where the sum runs over the fundamental dominant weights $\omega$.  One says that $\la$ is \emph{restricted} when $p := \car k \ne 0$ if $0 \le c_\omega < p$ for all $\omega$. (In case $\car k = 0$, all dominant weights are, by definition, restricted.)  

Suppose now that $p \ne 0$.  Write $\la = \la_0 + p^r \la_1$ for some $r \ge 1$, where $\la_0 = \sum_\omega c_\omega \omega$ and $0 \le c_\omega < p^r$ for all $\omega$.  If $\la_0$ and $p^{r-1} \la_1$ belong to $T^*$ (e.g., if $G$ is simply connected), then $L(\la) \cong L(\la_0) \ot L(p^{r-1} \la_1)^{[p]}$ \cite[II.3.16]{Jantzen}, the tensor product of $L(\la_0)$ and a Frobenius twist of $L(p^{r-1} \la_1)$.  As a representation of $\g$ (forgetting about the action of $G(k)$), this is the direct sum of $\dim L(\la_1)$ copies of $L(\la_0)$.

We label the simple roots of $G$ as in Table \ref{dynks.table}, which agrees with \cite{Bou:g4} as well as \cite{GurLawther}.  Note that our other references, \cite{GG:large}, \cite{GG:irred}, and \cite{GG:special} follow the numbering of \cite{luebeck}, which is different.

\begin{table}[bth]
\begin{center}
\renewcommand{\arraystretch}{1.7}
\begin{tabular}[c]{ccp{1.4in}} \toprule
name&torsion primes&diagram \\ \midrule
$A_\ell$ ($\ell \ge 1$)&none&
\begin{picture}(7,2)(0,0)
\put(0,1){\circle*{3}}
\put(0,1){\line(1,0){20}}
\put(20,1){\circle*{3}}
\put(20,1){\line(1,0){20}}
\put(40,1){\circle*{3}}
\put(45,-1.6){\mbox{$\cdots$}}
\put(62,1){\circle*{3}}
\put(62,1){\line(1,0){20}}
\put(82,1){\circle*{3}}
\put(82,1){\line(1,0){20}}
\put(102,1){\circle*{3}}

\put(-2,-7){\mbox{\tiny $1$}}
\put(18,-7){\mbox{\tiny $2$}}
\put(38,-7){\mbox{\tiny $3$}}
\put(54,-7){\mbox{\tiny $\ell$$-$$2$}}
\put(75,-7){\mbox{\tiny $\ell$$-$$1$}}
\put(100,-7){\mbox{\tiny $\ell$}}
\end{picture}
\\ 
$B_\ell$ ($\ell \ge 3$)&2&
\begin{picture}(7,2)(0,0)
\put(0,1){\circle*{3}}
\put(0,1){\line(1,0){20}}
\put(20,1){\circle*{3}}
\put(20,1){\line(1,0){20}}
\put(40,1){\circle*{3}}
\put(43,-1.6){ \mbox{$\cdots$}}
\put(62,1){\circle*{3}}
\put(62,1){\line(1,0){20}}
\put(82,1){\circle*{3}}
\put(82,2){\line(1,0){20}}
\put(82,0){\line(1,0){20}}
\put(89,-1){{\tiny\mbox{$>$}}}
\put(102,1){\circle*{3}}

\put(-2,-7){\mbox{\tiny $1$}}
\put(18,-7){\mbox{\tiny $2$}}
\put(38,-7){\mbox{\tiny $3$}}
\put(54,-7){\mbox{\tiny $\ell$$-$$2$}}
\put(75,-7){\mbox{\tiny $\ell$$-$$1$}}
\put(100,-7){\mbox{\tiny $\ell$}}
\end{picture}
\\

$C_\ell$ ($\ell \ge 2$)&none&
\begin{picture}(7,2)(0,0)
\put(0,1){\circle*{3}}
\put(0,1){\line(1,0){20}}
\put(20,1){\circle*{3}}
\put(20,1){\line(1,0){20}}
\put(40,1){\circle*{3}}
\put(43,-1.6){ \mbox{$\cdots$}}
\put(62,1){\circle*{3}}
\put(62,1){\line(1,0){20}}
\put(82,1){\circle*{3}}
\put(82,2){\line(1,0){20}}
\put(82,0){\line(1,0){20}}
\put(89,-1){{\tiny\mbox{$<$}}}
\put(102,1){\circle*{3}}

\put(-2,-7){\mbox{\tiny $1$}}
\put(18,-7){\mbox{\tiny $2$}}
\put(38,-7){\mbox{\tiny $3$}}
\put(54,-7){\mbox{\tiny $\ell$$-$$2$}}
\put(75,-7){\mbox{\tiny $\ell$$-$$1$}}
\put(100,-7){\mbox{\tiny $\ell$}}
\end{picture}

\\

$D_\ell$ ($\ell \ge 4$)&2&
\begin{picture}(7,2)(0,0)
\put(0,1){\circle*{3}}
\put(0,1){\line(1,0){20}}
\put(20,1){\circle*{3}}
\put(20,1){\line(1,0){20}}
\put(40,1){\circle*{3}}
\put(43,-1.6){ \mbox{$\cdots$}}
\put(62,1){\circle*{3}}
\put(62,1){\line(1,0){20}}
\put(82,1){\circle*{3}}
\put(82,2){\line(4,3){15}}
\put(82,0){\line(4,-3){15}}
\put(96.5,12.9){\circle*{3}}
\put(96.5,-10.9){\circle*{3}}

\put(-2,-7){\mbox{\tiny $1$}}
\put(18,-7){\mbox{\tiny $2$}}
\put(38,-7){\mbox{\tiny $3$}}
\put(54,-7){\mbox{\tiny $\ell$$-$$3$}}
\put(86,-0.5){\mbox{\tiny $\ell$$-$$2$}}
\put(100,-12){\mbox{\tiny $\ell$}}
\put(100,11.8){\mbox{\tiny $\ell$$-$$1$}}
\end{picture}

\\
$E_6$&2, 3&
\begin{picture}(7,2)(0,0)
\put(0,10){\circle*{3}}
\put(0,10){\line(1,0){15}}
\put(15,10){\circle*{3}}
\put(15,10){\line(1,0){15}}
\put(30,10){\circle*{3}}
\put(30,-5){\circle*{3}}
\put(30,10){\line(0,-1){15}}
\put(30,10){\line(1,0){15}}
\put(45,10){\circle*{3}}
\put(45,10){\line(1,0){15}}
\put(60,10){\circle*{3}}

\put(-2,3){\mbox{\tiny $1$}}
\put(13,3){\mbox{\tiny $3$}}
\put(33,3){\mbox{\tiny $4$}}
\put(43,3){\mbox{\tiny $5$}}
\put(58.5,3){\mbox{\tiny $6$}}
\put(33,-6){\mbox{\tiny $2$}}
\end{picture}

\\

$E_7$&2, 3&
\begin{picture}(7,2)(0,0)
\put(0,10){\circle*{3}}
\put(0,10){\line(1,0){15}}
\put(15,10){\circle*{3}}
\put(15,10){\line(1,0){15}}
\put(30,10){\circle*{3}}
\put(30,-5){\circle*{3}}
\put(30,10){\line(0,-1){15}}
\put(30,10){\line(1,0){15}}
\put(45,10){\circle*{3}}
\put(45,10){\line(1,0){15}}
\put(60,10){\circle*{3}}
\put(60,10){\line(1,0){15}}
\put(75,10){\circle*{3}}

\put(-2,3){\mbox{\tiny $1$}}
\put(13,3){\mbox{\tiny $3$}}
\put(33,3){\mbox{\tiny $4$}}
\put(43,3){\mbox{\tiny $5$}}
\put(58,3){\mbox{\tiny $6$}}
\put(33,-6){\mbox{\tiny $2$}}
\put(73,3){\mbox{\tiny $7$}}
\end{picture}

\\

$E_8$&2, 3, 5&
\begin{picture}(7,2)(0,0)
\put(0,10){\circle*{3}}
\put(0,10){\line(1,0){15}}
\put(15,10){\circle*{3}}
\put(15,10){\line(1,0){15}}
\put(30,10){\circle*{3}}
\put(30,-5){\circle*{3}}
\put(30,10){\line(0,-1){15}}
\put(30,10){\line(1,0){15}}
\put(45,10){\circle*{3}}
\put(45,10){\line(1,0){15}}
\put(60,10){\circle*{3}}
\put(60,10){\line(1,0){30}}
\put(75,10){\circle*{3}}
\put(90,10){\circle*{3}}

\put(-2,3){\mbox{\tiny $1$}}
\put(13,3){\mbox{\tiny $3$}}
\put(33,3){\mbox{\tiny $4$}}
\put(43,3){\mbox{\tiny $5$}}
\put(58,3){\mbox{\tiny $6$}}
\put(33,-6){\mbox{\tiny $2$}}
\put(73,3){\mbox{\tiny $7$}}
\put(88,3){\mbox{\tiny $8$}}

%
\end{picture}
\\

$F_4$&2, 3&
\begin{picture}(7,2)(0,0)
\put(0,1){\circle*{3}}
\put(0,1){\line(1,0){15}}
\put(15,1){\circle*{3}}
\put(15,0){\line(1,0){15}}
\put(15,2){\line(1,0){15}}
\put(19,-1){{\tiny\mbox{$>$}}}
\put(30,1){\circle*{3}}
\put(30,1){\line(1,0){15}}
\put(45,1){\circle*{3}}

\put(-2,-7){\mbox{\tiny $1$}}
\put(13,-7){\mbox{\tiny $2$}}
\put(28,-7){\mbox{\tiny $3$}}
\put(43,-7){\mbox{\tiny $4$}}
\end{picture} 
\\

$G_2$&2& 
\begin{picture}(7,2)(0,0)
\put(0,1){\circle*{3}}
\put(0,0.1){\line(1,0){15}}
\put(0,1.1){\line(1,0){15}}
\put(0,2.1){\line(1,0){15}}
\put(4,-1){{\tiny\mbox{$<$}}}
\put(15,1){\circle*{3}}

\put(-2,-7){\mbox{\tiny $1$}}
\put(13,-7){\mbox{\tiny $2$}}
\end{picture} \\[0.7ex] \bottomrule

\end{tabular}
\caption{Dynkin diagrams of simple root systems, with simple roots numbered as in \cite{Bou:g4}, and their torsion primes from \cite[p.~299]{Dem:inv}.} \label{dynks.table}
\end{center}
\end{table}

Irreducible representations $L(\la)$, $L(\la')$ of $G$ are equivalent \emph{up to graph automorphism} if there is an automorphism $\phi$ of the Dynkin diagram (i.e., an automorphism of the root system that normalizes the set of simple roots) so that $\phi(\la) = \la'$; such representations are equivalent up to an automorphism of $G$.  For example, the representations $\wedge^d k^n$ and $\wedge^{n-d} k^n$ of $\SL_n$ are equivalent up to graph automorphism.  Our results, which are about stabilizers in general position and so on, are transparently the same for representations that are equivalent up to graph automorphism.


\subsection*{Special characteristic}
For the remainder of this section, suppose that $G$ is simple and simply connected.  We say that $\car k$ is \emph{special} for $G$ if $G$ has type $G_2$ and $\car k = 3$ or $G$ has type $B_\ell$ ($\ell \ge 2$), $C_\ell$ ($\ell \ge 2$), or $F_4$ and $\car k = 2$.  (This was written as ``exceptionally bad characteristic'' in the title of \cite{GG:special}.)  That is, $\car k$ is special if the Dynkin diagram of $G$ has an edge with multiplicity $\car k$.  
When that holds, there is a \emph{very special isogeny} $\pi \!: G \to \Gb$ where $\Gb$ is also simple simply connected and the root system of $\Gb$ is inverse to the root system of $G$, see \cite[\S7.1]{CGP2} or \cite[\S10]{St:rep} for a concrete description.

Now suppose that $\car k$ is special for $G$, so in particular $\Delta$ has two root lengths.  Write a dominant weight $\la$ as  $\la = \sum c_\delta \omega_\delta$, where $c_\delta \ge 0$ and $\omega_\delta$ is the fundamental weight dual to $\delta^\vee$ for $\delta \in \Delta$.  We write $\la = \la_s + \la_\ell$ where $\la_s = \sum_{\text{$\delta$ short}} c_\delta \omega_\delta$ and $\la_\ell = \sum_{\text{$\delta$ long}} c_\delta \omega_\delta$, i.e., $\qform{\la_s, \delta^\vee} = 0$ for $\delta$ long and $\qform{\la_\ell, \delta^\vee} = 0$ for $\delta$ short.  Steinberg \cite{St:rep} shows that $L(\la) \cong L(\la_\ell) \ot L(\la_s)$ and that furthermore the action of $G$ on $L(\la_\ell)$ factors through the very special isogeny.  For example, in case $G = \Sp_{2\ell}$ for some $\ell \ge 2$ and $\car k = 2$, $\Gb = \Spin_{2\ell + 1}$ and the non-faithful representation $L(\omega_\ell)$ of $G$ is obtained by composing the very special isogeny $\pi$ with the spin representation of $\Gb$, which is irreducible and faithful. 

In particular, if $L(\la)$ is faithful, then $\la_s \ne 0$.

\section{Adjoint representation} 

We record in this section various results about $G$ acting on $\Lie(G)$ and the irreducible representation $L(\hst)$.

The generic stabilizer for simple $G$ acting on $\Lie(\Ad(G))$ is determined in \cite[Prop.~9.2]{GG:edp}; this representation has an s.g.p.\ whose identity component is a maximal torus in $G$.  It follows that $\dim k[\Lie(\Ad(G))]^G = \rank G$.  However, $\Lie(\Ad(G))$ agrees with $\g = \Lie(G)$ if and only if the center of $G$, i.e., the kernel of $G \to \Ad(G)$, is \'etale.

The general statement is that the Lie algebra $\gt$ of the simply connected cover $\Gt$ of $G$ is the Weyl module $V(\hst)$ with highest weight the highest root $\hst$ and the head of $V(\hst)$ is the irreducible representation $L(\hst)$, see for example \cite[2.5]{G:vanish}.

For $G$ simple, $\g$ is an irreducible representation of $G$ --- i.e., $\Lie(G) = L(\hst)$ --- if and only if the center $Z(\Gt)$  is \'etale and $\car k$ is not special for $G$, see \cite{Hiss}.

\begin{lem} \label{adj.eg}
The irreducible representation $L(\hst)$ of a simple group $G$ is faithful if and only if $G$ is adjoint, $\car k$ is not special for $G$, and $(G, \car k) \ne (\PGL_2, 2)$.  If those equivalent conditions hold, then the s.g.p.\ exists, its identity component is a maximal torus of $G$, and
\[
\dim k[L(\hst)]^G = \rank G - \dim \Lie(Z(\Gt)).
\]
\end{lem}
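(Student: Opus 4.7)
The plan splits into the faithfulness characterization and the determination of the s.g.p.\ and invariant ring dimension.

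\emph{Faithfulness.} Each necessary condition is easy. If $G$ is not adjoint, the central kernel of $\Gt \to G$ acts trivially on the subquotient $L(\hst)$ of $\gt$. If $\car k$ is special for $G$, then $\hst$ is long, so $\hst_s = 0$ and by Steinberg's theorem $L(\hst)$ factors through the very special isogeny. For $(\PGL_2, 2)$, $L(2\omega_1) = L(\omega_1)^{[2]}$ factors through the Frobenius of $\PGL_2$. Conversely, assume $G$ adjoint, $\car k$ not special, $(G, \car k) \ne (\PGL_2, 2)$. The differential of the isogeny $\Gt \to G$ fits into
\[
0 \to \Lie(Z(\Gt)) \to \gt \xrightarrow{\dpi} \g \to W \to 0,
\]
with $W$ a trivial $G$-module of dimension $d := \dim \Lie(Z(\Gt))$ (since $\gt$ and $\g$ have matching $T$-weight multiplicities). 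As $\car k$ is not special, the radical of $V(\hst) = \gt$ equals its socle $\Lie(Z(\Gt))$, so $\dpi(\gt) \cong L(\hst)$ embeds as a $G$-submodule of $\g$ with trivial cokernel $W$. The kernel $K$ of $G \to \GL(L(\hst))$ is finite and normal; its $k$-points are central in the simple adjoint $G(k)$ and hence trivial, while $\Lie(K) = A := \{x \in \g : [x, L(\hst)] = 0\}$ is a $G$-submodule of $\g$. Since $L(\hst)$ contains pairs $e_\alpha, e_{-\alpha}$ whose bracket is the coroot $\alpha^\vee$, nonzero in $\Lie(T)$ except precisely in the $(\PGL_2, 2)$ case, $L(\hst)$ is not abelian; hence $L(\hst) \not\subset A$ and $A \cap L(\hst) = 0$ by irreducibility. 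So $A$ injects into $W$, is a trivial $G$-module, and lies in $\g^G$; a case-by-case check shows $\g^G = 0$ (equivalently, the extension above is non-split) in each remaining case, giving $A = 0$ and faithfulness.

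\emph{S.g.p.\ and dimension.} By \cite[Prop.~9.2]{GG:edp}, the action of $G$ on $\g = \Lie(\Ad(G))$ has an s.g.p.\ with identity component a maximal torus $T$, attained at regular semisimple elements, and $\dim k[\g]^G = \rank G$. To transfer this to $L(\hst) \subset \g$, consider the $T$-weight-zero subspace $L(\hst)_0 = \dpi(\Lie(\Tt)) = L(\hst) \cap \Lie(T)$, of dimension $\rank G - d$. For every root $\alpha$, $d\alpha \!: \Lie(\Tt) \to k$ is nonzero (no root of the relevant simple root systems is $p$-divisible in $X^*(\Tt)$), and vanishes on $\Lie(Z(\Gt))$, so it descends to a nonzero linear form on $L(\hst)_0$. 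Hence a generic $v_0 \in L(\hst)_0$ has $d\alpha(v_0) \ne 0$ for all roots $\alpha$ and is regular semisimple in $\g$ with stabilizer $T$. The orbit map $\phi \!: G \times L(\hst)_0 \to L(\hst)$, $(g, v) \mapsto \Ad(g) v$, has general fiber of dimension $\dim T$, so its image has dimension $\dim G + (\rank G - d) - \rank G = \dim G - d = \dim L(\hst)$, i.e., $\phi$ is dominant. Therefore a generic $v \in L(\hst)$ is $G$-conjugate to such a $v_0$, establishing the s.g.p.\ with identity component $T$, and the dimension formula follows:
\[
\dim k[L(\hst)]^G = \dim L(\hst) - \dim G + \rank G = \rank G - \dim \Lie(Z(\Gt)).
\]

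The main obstacle is the case-by-case verification of $\g^G = 0$ in the converse direction of the faithfulness claim when $Z(\Gt)$ is not \'etale: this has to be handled for $\PGL_n$ with $p \mid n$ and $(n, p) \ne (2, 2)$, the adjoint form of $D_\ell$ in characteristic~$2$, adjoint $E_6$ in characteristic~$3$, and adjoint $E_7$ in characteristic~$2$.
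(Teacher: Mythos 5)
Your overall strategy for the second half (reduce to generic elements of the image of $\Lie(\Tt)$ in the zero weight space, then a transporter/dominance count) is essentially the paper's, and your dimension formula is fine. But there is a genuine gap in your proof that the s.g.p.\ \emph{exists}: you show that a generic $v_0 \in L(\hst)_0$ has stabilizer with identity component $T$, and that all generic vectors are conjugate into $L(\hst)_0$, but you never show that the full group scheme $G_{v_0}$ (not just $G_{v_0}^\circ$) is independent of the choice of generic $v_0$. Since $G_{v_0} \subseteq N_G(T)$, the issue is the image of $G_{v_0}$ in the Weyl group $N_G(T)/T$, which a priori could vary with $v_0$ inside $L(\hst)_0$. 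The paper handles exactly this point: an element $n \in N_G(T)$ with Weyl image $w$ stabilizes $v_0$ if and only if $\alpha \circ w - \alpha$ is divisible by $\car k$ for every root $\alpha$, a condition visibly independent of $v_0$. Without some such argument, you have only produced a conjugacy class of identity components, not an s.g.p.

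Two smaller problems in the faithfulness part. First, your claim that ``$\hst$ is long, so $\hst_s = 0$'' is false for types $B_2$ and $C_\ell$: there $\hst = 2\omega_\delta$ with $\delta$ \emph{short}, so $\hst_s = \hst \ne 0$. (The conclusion survives because $L(2\omega_\delta) = L(\omega_\delta)^{[2]}$ is a Frobenius twist in characteristic $2$, as in your $\PGL_2$ case, but that is a different mechanism; the paper instead argues directly with $\ker\dpi$ for the very special isogeny.) Second, your converse direction bottoms out in an asserted case-by-case verification that $\g^G = 0$ for adjoint $\PGL_n$ with $p \mid n$, adjoint $D_\ell$ in characteristic $2$, etc. This is where the content lies and you do not carry it out; it does follow from the determination of the centers of these Lie algebras in \cite{Hogeweij}, but as written it is a deferral, whereas the paper short-circuits the whole issue by quoting the criterion that the socle $\im[\gt \to \g]$ kills $L(\hst)$ iff $\hst$ is $p$-divisible in the weight lattice \cite[Lemma 3.1(2)]{GG:large}, which happens only for $(\PGL_2,2)$. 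Also note that ``$\g^G = 0$'' is not literally equivalent to non-splitness of your extension when $\dim \Lie(Z(\Gt)) \ge 2$.
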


\begin{proof}
The kernel of the action of $\Gt$ on $V(\hst)$ is $Z(\Gt)$ \cite[Prop.~21.7]{Milne:AG}, so the kernel of the action of $G$ on $V(\hst)$ is $Z(\Gt) / \ker[\Gt \to G] \cong Z(G)$.  Thus,  for $L(\hst)$ to be faithful it is necessary that $G$ is adjoint. 

If $\car k$ is special for $G$, then $\n := \ker \dpi$ is a nonzero proper ideal in $\gt = V(\hst)$, so its image in $L(\hst)$ is zero.  In particular, the image of $\n$ in $\g$ acts trivially on $L(\hst)$.  As $\n \not\subseteq \Lie(Z(\Gt))$ \cite{Hiss}, its image in $\g$ is not zero, so $\g$ and the group scheme $G$ do not act faithfully on $L(\hst)$.

Suppose for the remainder of the proof that $G$ is adjoint and $\car k$ is not special for $G$.  Since $G(k)$ is simple as an ordinary group, it acts faithfully on $L(\hst)$, whence the kernel of the action of $G$ is infinitesimal.  

The socle of $\g$ as a representation of $G$ is irreducible \cite{Hiss}; it is the subalgebra generated by the root subspaces, i.e., the image of the Lie algebra of the simply connected cover of $\Gt$ of $G$, see for example \cite[Lemma 3.1(1)]{GG:large}.  That is, $L(\hst)$ is not faithful if and only if the composition $\gt \to \g  \to \gl(L(\hst))$ is zero if and only if 
$(G, \car k) = (\PGL_2, 2)$ by \cite[Lemma 3.1(2)]{GG:large}.
This concludes the proof of the first sentence of the lemma.  For the remainder of the proof we suppose additionally that $(G, \car k) \ne (\PGL_2, 2)$. 

Pick a maximal torus $\Tt$ in $\Gt$.  The orbit of a generic element in $V(\hst)$ meets $\Lie(\Tt)$ by \cite[XIII.5.1, XIV.3.18]{SGA3.2}, compare Lemma \ref{psi.sgp} below.
A generic element $t \in \Lie(\Tt)$ has image a generic element $v \in L(\hst)$.  The arguments of \cite[Example 3.4]{GG:large} apply and the stabilizer $G_v$ of $v$ has identity component the image $T$ of $\Tt$ and in particular $T \subseteq G_v \subseteq N_G(T)$.

An element $n \in N_G(T)(k)$ belongs to $G_v$ if and only if $\Ad(n)t - t$ is in the kernel of the map $V(\hst) \to L(\hst)$, i.e., is in $\Lie(Z(\Gt))$.  Equivalently, if and only if every root $\alpha$ vanishes on $\Ad(n)t - t$.  Since $t$ is generic, this is equivalent to $\alpha \circ \Ad(n) = \alpha$ for every root $\alpha$, equivalently, the element $\alpha \circ \Ad(n) - \alpha$ in the root lattice is divisible by $\car k$ for every simple root $\alpha$.  This last condition only depends on the image of $n$ in the Weyl group $N_G(T)/T$ and not on the choice of $t$, so $G_v$ depends only on the choice of $\Tt$ (equivalently, $T$), verifying that $G_v$ is an s.g.p.\ for the action of $G$ on $L(\hst)$.
\end{proof}

\begin{eg}
The s.g.p.\ appearing in the statement of Lemma \ref{adj.eg} need not be connected.  Take $G$ of type $E_8$, in which case the irreducible representation $L(\hst)$ is $\g$ itself and $G$ is both simply connected and adjoint.  In the notation of the proof, $G_v$ is connected if and only if $\car k \ne 2$, see \cite[Prop.~9.2]{GG:edp}.   (Note that Steinberg's result \cite[Th.~0.2]{St:tor}, which shows in some cases that semisimple elements have connected centralizers, assumes $\car k$ is not a torsion prime, so it does not apply here when $\car k$ is 2, 3, or 5.) 
  When $\car k  = 2$, $G_v / G^\circ_v \cong \Z/2$, where the nontrivial element acts on the torus $G_v^\circ$ by inversion.
 \end{eg}
 
 \begin{eg}
We apply Lemma \ref{adj.eg} to construct Table \ref{adj.small.table}, where we list the cases where simple $G$ acts faithfully on $L(\hst)$ and $\dim k[L(\hst)]^G \le 2$.  If $\dim \Lie(Z(\Gt)) = 0$, i.e., $Z(\Gt)$ is \'etale, then (type of $G, \car k)$ is one of $(A_1, {\ne 2})$, $(A_2, {\ne 3})$, $(B_2, {\ne 2})$, or $(G_2, {\ne 3})$.  If $\dim \Lie(Z(\Gt)) = 1$, then $\rank G \le 3$ and (type of $G, \car k)$ is $(A_2, 3)$ or $(A_3, 2)$.  Finally, if $\dim \Lie(Z(\Gt)) = 2$, then $G$ has type $D_{2m}$ for some $m \ge 2$, so (type of $G, \car k) = (D_4, 2)$.  
 \end{eg}
 
\begin{table}[ht]
\begin{tabular}{@{}crrc@{}} \toprule
type of $G$&$\car k$&$\dim L(\hst)$&$\dim k[L(\hst)]^G$ \\ \midrule
$A_1$&$\ne 2$&3&1 \\
$A_2$&$\ne 3$&8&2 \\
$A_2$&3&7&1\\
$A_3$&2&14&2\\
$B_2$&$\ne 2$&10&2 \\
$D_4$&2&26&2\\
$G_2$&$\ne 3$&14&2 \\ \bottomrule
\end{tabular}
\caption{Simple $G$ that act faithfully on $L(\hst)$ such that $\dim k[L(\hst)]^G \le 2$.} \label{adj.small.table}
\end{table}

We will use the following in the proof of Theorem \ref{same.thm} at the end of the paper, but we put it here because it only concerns $L(\hst)$ as a representation.

\begin{lem} \label{adj.sub}
Let $G$ be a simple algebraic group.  If there is a proper, connected, and  smooth algebraic subgroup of $G$ that acts irreducibly on $L(\hst)$, then $\car k$ is special for $G$.
\end{lem}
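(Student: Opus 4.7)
The plan is to argue by contradiction. Assume $\car k$ is not special for $G$ and that some proper, connected, smooth subgroup $H$ of $G$ acts irreducibly on $L(\hst)$. Because $\hst$ lies in the root lattice, $Z(G)$ acts trivially on $L(\hst)$, so, passing to $G/Z(G)$ (which is simple by Lemma \ref{simple.quo} and in which the image of $H$ is still proper, connected, smooth, and acts irreducibly on $L(\hst)$ because $Z(G)$ is finite), I may assume that $G$ is adjoint. The case $(G, \car k) = (\PGL_2, 2)$ is handled directly: $L(\hst) = L(\omega_1)^{[2]}$ has dimension $2$, while every proper connected smooth subgroup of $\PGL_2$ is solvable and therefore by Lie--Kolchin fixes a line in any module of positive dimension.

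In the remaining cases, Lemma \ref{adj.eg} and the reasoning in its proof give that $L(\hst)$ is faithful and is isomorphic to the socle of $\g = \Lie(G)$, namely the image of $\gt \to \g$. In particular $L(\hst)$ sits inside $\g$ as an $H$-submodule, and $\g/L(\hst)$ has dimension $d := \dim \Lie(Z(\Gt))$. The key observation is that $\Lie(H)$ is also an $H$-submodule of $\g$ under the adjoint action, so $\Lie(H) \cap L(\hst)$ is an $H$-submodule of $L(\hst)$; by irreducibility it equals $0$ or $L(\hst)$.

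In the first case we get $L(\hst) \subseteq \Lie(H)$, so $\codim_G H \le d$; in the second case $\Lie(H) \hookrightarrow \g/L(\hst)$, so $\dim H \le d$. The hypothesis that $\car k$ is not special forces $d$ to be small: $d = 0$ unless $G$ has type $A_n$ with $\car k \mid n+1$, type $D_n$ with $\car k = 2$, type $E_6$ with $\car k = 3$, or type $E_7$ with $\car k = 2$, and in every such case $d \le 2$. The second case then forces $H$ to be a smooth connected group of dimension $\le 2$, hence solvable, so $H$ fixes a line in $L(\hst)$ by Lie--Kolchin, contradicting irreducibility (since $\dim L(\hst) > 1$).

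I expect the main obstacle to be the first case, where one must verify that for each of the pairs $(G, \car k)$ listed above, the group $G$ has no proper connected smooth subgroup of codimension at most $d$. This follows from the classification of maximal closed connected subgroups of simple algebraic groups: the minimum codimension of a proper connected subgroup of $G$ equals the dimension of the smallest projective homogeneous variety for $G$, which easily exceeds $2$ in each of the required cases ($n$ for $A_n$ with $n \ge 2$; $2n-2$ for $D_n$ with $n \ge 3$; $16$ for $E_6$; and $27$ for $E_7$).
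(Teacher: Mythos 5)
Your argument is correct, and its skeleton matches the paper's: reduce to $G$ adjoint, identify $L(\hst)$ with the socle of $\g$ so that $\g/L(\hst) \cong \Lie(Z(\Gt))$ has dimension $d \le 2$, and intersect the $H$-submodule $\Lie(H)$ of $\g$ with that socle. Where you diverge is the endgame in the main case $L(\hst) \subseteq \Lie(H)$. The paper observes that $\Lie(H)$ then contains every root space of $G$, hence $H$ and $G$ have the same unipotent radicals of Borel subgroups, and these generate $G$, so $H = G$; this is elementary and uniform. You instead conclude $\codim_G H \le d$ and invoke the fact that the minimal codimension of a proper connected subgroup of a simple group equals the dimension of its smallest flag variety $G/P$ --- true, but a heavier input, since it requires knowing that maximal connected \emph{reductive} subgroups also have codimension at least $\dim G/P_{\min}$, i.e., some form of the classification of maximal subgroups. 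One small slip: for $A_2$ the minimal codimension is exactly $2$, so it does not ``exceed $2$''; your argument survives only because $d=1$ in type $A$ (the sole case with $d=2$ is $D_n$, $n$ even, $\car k = 2$, where the quadric has dimension $2n-2 \ge 6$), so the comparison should be against $d$ case by case rather than against $2$. On the plus side, you make explicit two points the paper leaves implicit: the dichotomy $\Lie(H) \cap L(\hst) \in \{0, L(\hst)\}$ together with the disposal of the zero case (the paper instead first shows $H$ is semisimple, so $\dim H \ge 3 > d$), and the separate treatment of $(\PGL_2, 2)$, where $L(\hst)$ is not faithful and the hypotheses of Lemma \ref{adj.eg} fail.
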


\begin{proof}
Let $G'$ be a connected, proper, and smooth subgroup of $G$ that acts irreducibly on $L(\hst)$.
Since the kernel of the action by $G$ on $L(\hst)$ is finite and $G'$ acts irreducibly, we conclude that $G'$ has trivial radical and so is semisimple.

For sake of contradiction, suppose $\car k$ is not special.
We may assume that $G$ is adjoint, so $L(\hst)$ is the socle of $\g$ as a representation of $G$ and $\g/L(\hst)$ is isomorphic to the Lie algebra of the center of the simply connected cover of $G$.
As the Lie algebra $\g'$ is a $G'$-invariant subspace of $\g$, it contains $L(\hst)$.  In particular, $G$ and $G'$ have the same number of roots and the same unipotent radicals for the Borel subgroups.  As these unipotent radicals of a semisimple group generate the group, we find $G = G'$.
\end{proof}

For the sake of completeness, we provide the following converse to Lemma \ref{adj.sub}.  In the statement, the hypotheses ``connected'' and ``smooth'' are redundant because they are included in the definition of simple (see \S\ref{back.sec}).  We have included them here to emphasize that this is a converse to the preceding.

\begin{lem}
Let $G$ be a simple algebraic group.  If $\car k$ is special for $G$, then there is a proper, connected, smooth, and simple subgroup of $G$ that acts irreducibly on $L(\hst)$.
\end{lem}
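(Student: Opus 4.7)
My plan is to proceed by case analysis on the special pair $(G, \car k)$, which by definition is one of $(G_2, 3)$, $(B_\ell, 2)$, $(C_\ell, 2)$, or $(F_4, 2)$ for $\ell \ge 2$. In each case I would exhibit a proper connected simple subgroup $H$ of $G$ and verify that $H$ acts irreducibly on $L(\hst)$.

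The cleanest case is $G = C_\ell$ in characteristic $2$. Here $\hst = 2\omega_1$ is not restricted, and Steinberg's tensor product theorem (as already invoked in the introduction for $\Sp_{2\ell}$) gives $L(\hst) \cong L(\omega_1)^{[2]}$, a Frobenius twist of the natural $2\ell$-dimensional symplectic module. Frobenius twisting preserves irreducibility of a subgroup's action, so it suffices to produce a proper connected simple subgroup of $\Sp_{2\ell}$ acting irreducibly on $k^{2\ell}$. For $\ell \ge 3$ I would take $H = \Spin_{2\ell}$, embedded in $\Sp_{2\ell}$ via the fact that in characteristic $2$ the polarization of a non-degenerate quadratic form is a non-degenerate alternating form; the natural $2\ell$-dimensional module remains irreducible for $\Spin_{2\ell}$. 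For $\ell = 2$ I would instead use $H = \SL_2 \hookrightarrow \Sp_4$ acting via the irreducible $4$-dimensional module $L(1) \otimes L(1)^{[2]}$.

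For the three remaining cases I would exploit the opposite-length root subsystem. For $G = G_2$ with $\car k = 3$, take $H = \SL_3$ generated by the long-root subgroups; here $L(\hst) = L(\omega_2)$ is $7$-dimensional and, as an $H$-module, should identify with $\psl_3 = \sl_3 / k \cdot \Id$, the irreducible adjoint quotient of $\sl_3$ in characteristic $3$. For $G = B_\ell$ in characteristic $2$ with $\ell \ge 3$, take $H = \Spin_{2\ell}$ coming from the $D_\ell$ long-root subsystem inside $B_\ell = \Spin_{2\ell+1}$. For $G = F_4$ in characteristic $2$, take $H = \Spin_8$ from the $D_4$ long-root subsystem, acting on the $26$-dimensional module $L(\hst) = L(\omega_1)$.

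The principal obstacle is verifying irreducibility of $L(\hst)|_H$ in these last three cases, since no uniform reduction is available. The natural tool is the known $H$-module structure of the Weyl module $V(\hst) = \Lie(\Gt)$: once the composition factors of $V(\hst)|_H$ are identified, with dimensions checked against \cite{luebeck}, extracting the irreducible head $L(\hst)$ and checking $H$-irreducibility reduces to bookkeeping on highest weights. Small-rank coincidences such as $B_2 \cong C_2$ will require ad hoc treatment parallel to the $C_2$ case above.
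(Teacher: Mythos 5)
Your proposal is correct and follows essentially the same route as the paper: in both, the key subgroup is the one generated by the long-root subgroups (switching to the short-root $D_\ell$ subgroup for type $C_\ell$), and irreducibility of $L(\hst)$ on restriction is read off from the known submodule structure of $\g$ as in Hiss/Hogeweij, which is exactly the ``bookkeeping'' you defer. One genuine improvement: your separate treatment of $C_2 \cong B_2$ via $\SL_2 \hookrightarrow \Sp_4$ acting as $L(1)\otimes L(1)^{[2]}$ covers a case the paper's argument glosses over, since there both the long- and short-root subgroups are of type $D_2 = A_1 \times A_1$ and hence not simple.
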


\begin{proof}
We may assume that $G$ is simply connected, so $L(\hst)$ is the head of $\g$.  

If $G$ is of type $C_n$ for $n \ge 2$, the root subgroups for the short roots generate a subgroup $G'$ of type $D_n$.  The restriction of $L(\hst)$ to $G'$ is a Frobenius twist of the natural representation of dimension $2n$, so $G'$ acts irreducibly on $L(\hst)$.

If $G$ has type $B_n$ for $n \ge 3$, $F_4$, or $G_2$, then the root subgroups for the long roots generate a subgroup $G'$ of type $D_n$, $D_4$, or $A_2$ respectively.  The description of $\g$ in \cite{Hiss} or \cite{Hogeweij} shows that $\Lie(G')$ maps onto $L(\hst)$ and indeed $L(\hst)$ is also the irreducible part of the adjoint representation of $G'$.
\end{proof}

\subsection*{Invariant polynomial functions}
We now study the rings of invariant polynomial functions $k[\g]^G$ and $k[L(\hst)]^G$, especially in the case where $\g \ne L(\hst)$.

Suppose $G$ is simple and let $T$ be a maximal torus in $G$.  For $W$ the Weyl group $N_G(T)/T$, the natural restriction map
\begin{equation} \label{CRT.map}
k[\g]^G \to k[\tor]^W
\end{equation}
 is injective.  Furthermore, it is an isomorphism if and only if  $(G, \car k) \ne (\Sp_{2\ell}, 2)$ for $\ell \ge 1$, see \cite[II.3.17']{SpSt}, \cite[p.~82, 7.12]{Jantzen:nil}, or \cite{ChaputRomagny}.
This is sometimes called the \emph{Chevalley Restriction Theorem}.  

If $G$ is simply connected, then $k[\tor]$ from the previous paragraph is the symmetric algebra on the weights with coefficients in $k$.  If $\car k$ is not a torsion prime for (the root system of) $G$ as in Table \ref{dynks.table}, then $k[\tor]^W$ is a polynomial algebra \cite[p.~297, Cor.]{Dem:inv}.  If additionally $(G, \car k) \ne (\Sp_{2\ell}, 2)$ for $\ell \ge 1$, then the generators of $k[\tor]^W$ have the same degrees as they do for the analogous group in case $k = \C$ \cite[p.~296, Cor., Th.~3]{Dem:inv}.

\begin{eg} \label{kg2}
Suppose $G$ has type $G_2$.  The only torsion prime for $G$ is 2, so when $\car k \ne 2$, $k[\g]^G$ is a polynomial ring with generators of degree 2 and 6 by the above and \cite[\S{VIII.8.3}]{Bou:g7}.

In case $\car k = 2$, we apply the Chevalley Restriction Theorem by hand.  The element $-1$ in the Weyl group acts trivially on $\tor$, so $k[\tor]^W$ is the ring of invariant functions of the symmetric group on 3 letters on its irreducible 2-dimensional representation.  We conclude that $k[\g]^G$ is a polynomial ring with generators of degrees 2 and 3.  (Alternatively, one can apply the next two examples to an $A_2$ subgroup containing $T$ to see that $k[\g]^G = k[\tor]^W = k[\tor]^{S_3} = k[\sl_3]^{\SL_3}$ to draw the same conclusion.)
\end{eg}

\begin{eg} \label{SL.ad}
For $G = \SL_n$, the ring of $G$-invariant functions on the space of $n$-by-$n$ matrices is polynomial with generators the coefficients of the characteristic polynomial.  
Tracking the proof of \cite[Prop.~4.1]{Nakajima}, one finds that the ring of invariants $k[\g]^G$ is also polynomial, with the same generators except for the trace.  Away from the case $n = 2$ and $\car k = 2$, the Chevalley Restriction Theorem and Demazure's result provides the same conclusion.

In case $n = 2$ and $\car k = 2$, the Weyl group $W$ acts trivially on the Lie algebra $\tor$ of a maximal torus in $G$, so $k[\tor]^W$ is a polynomial ring in one variable, generated by a linear function.  The image of the restriction map $k[\g]^G \to k[\tor]^W$ is $(k[\tor]^W)^{[2]}$.  \end{eg}

We now address $k[L(\hst)]^G$ in case $\g$ is a reducible representation.

\begin{eg} \label{coCRT}
Suppose $G$ is simple, $\car k$ is not special for $G$, and (type of $G, \car k) \ne (A_1, 2)$.  In particular, the Chevalley Restriction Theorem applies.  As the action of $G$ on $L(\hst)$ factors through the adjoint group $\Gb$ of $G$ and our goal is to calculate $k[L(\hst)]^G$, we are free to choose $G$ to be simply connected.  

Under the hypotheses, $L(\hst)$ is the image of $\g$ in $\gb$.   As $Z(G)$ is contained in $T$ \cite[Prop.~21.7]{Milne:AG}, its Lie algebra $\z$ is contained in $\tor$.  The natural maps $k[\tor/\z]^G \hookrightarrow k[\tor]^G$ and $k[\g/\z]^G \hookrightarrow k[\g]^G$ are compatible with the isomorphism \eqref{CRT.map} in the sense that it induces an isomorphism $k[L(\hst)]^G \xrightarrow{\sim} k[\tor_0]^W$, where $\tor_0 := \tor/\z$ is the image of $\tor$ in the Lie algebra of the image $\Tb$ of $T$.  It is the subspace of $\Lie(\Tb)$ spanned by the elements $h_\alpha$ in a Chevalley basis where $\alpha$ is a root.
(See  \cite[Example 8.3]{GG:simple} for a concrete illustration in the case $G$ is isogenous to $\SL_4$ and $\car k = 2$,  where $L(\hst)$ has a polynomial ring of invariants with generators of degrees 2 and 3.)
\end{eg}

\begin{cor} \label{adj.inv}
Suppose $G$ is simple and $L(\hst)$ is faithful.  If $\dim k[L(\hst)]^G \le 2$, then $k[L(\hst)]^G$ is a polynomial ring.
\end{cor}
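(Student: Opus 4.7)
By Lemma~\ref{adj.eg}, the faithfulness of $L(\hst)$ forces $G$ to be adjoint, $\car k$ to be non-special for $G$, $(G,\car k)\ne(\PGL_2,2)$, and
\[
\dim k[L(\hst)]^G \;=\; \rank G - \dim \Lie Z(\Gt).
\]
The plan is a short case analysis: first enumerate all $(G,\car k)$ for which this quantity is at most $2$, then verify the polynomial-ring conclusion in each case using the identification $k[L(\hst)]^G \cong k[\tor_0]^W$ from Example~\ref{coCRT}.

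For the enumeration, I would use that $Z(\Gt)$ is cyclic except in type $D_{2n}$, so $\dim \Lie Z(\Gt) \le 1$ outside that series and equals $2$ only for $D_{2n}$ in characteristic $2$. Combining $\rank G - \dim \Lie Z(\Gt) \le 2$ with the exclusion of special characteristics and $(\PGL_2, 2)$ leaves exactly: $\PGL_2$ ($\car k\ne 2$), $\PGL_3$, $\PSp_4$ ($\car k\ne 2$), $G_2$ ($\car k\ne 3$), $\PGL_4$ in characteristic $2$, and $\PSO_8$ in characteristic $2$.

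When the dimension equals $1$ (namely $\PGL_2$ and $\PGL_3$ in characteristic $3$), $k[\tor_0]^W$ is a graded subring of the polynomial ring $k[\tor_0]\cong k[t]$ fixed by the finite cyclic image of $W$ in $\Gm$, so it is of the form $k[t^m]$, which is polynomial. When the dimension equals $2$ and $\z=0$ (the cases $\PGL_3$ with $\car k\ne 3$, $\PSp_4$ with $\car k\ne 2$, and $G_2$ with $\car k\ne 2,3$), one has $\tor_0=\tor$, and $k[\tor]^W$ is polynomial by Demazure, the characteristic avoiding the relevant torsion primes. The case $(G_2,\car k=2)$ is computed directly in Example~\ref{kg2} (polynomial with generators of degrees $2$ and $3$), and the case $(\PGL_4,\car k=2)$ is Example~8.3 of \cite{GG:simple} cited in Example~\ref{coCRT} (polynomial with generators of degrees $2$ and $3$).

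The hard case is $(\PSO_8,\car k=2)$, where $\tor_0$ is a $2$-dimensional quotient of the $4$-dimensional $\tor$ by the $2$-dimensional Lie algebra $\z$ of the center $\mu_2\times\mu_2$ of $\Spin_8$. I would handle this by fixing coordinates $x_1,\ldots,x_4$ on $\tor$ so that $W(D_4)$ acts by signed permutations with an even number of sign changes, identifying the two $W$-invariant elements of $\tor$ in characteristic $2$ that span $\z$ (for example, $\sum x_i$ and a further relation coming from the non-smoothness of the center), and then exhibiting two algebraically independent $W$-invariants on $\tor_0$ whose degrees multiply to the order of the image of $W$ in $\GL(\tor_0)$. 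The main obstacle is precisely this last computation, which does not follow from Demazure (the torsion prime $2$ coincides with $\car k$) and which is not covered by any example already set up in the paper; once carried out, all six cases give polynomial rings and the corollary follows.
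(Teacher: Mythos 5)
Your reduction is sound: the enumeration of $(G,\car k)$ with $\rank G - \dim\Lie(Z(\Gt)) \le 2$ is correct, and the identifications via the Chevalley Restriction Theorem and Example~\ref{coCRT} are the right tool. The dimension-one cases, the rank-two cases with \'etale center handled by Demazure, and the appeals to Example~\ref{kg2} and to \cite[Example 8.3]{GG:simple} for $(\PGL_4, 2)$ all go through. But the proof is not complete: you explicitly leave the case $(\PSO_8,\car k =2)$ as an unexecuted plan (``once carried out\ldots''), and that case is the entire content of the corollary beyond what is already recorded elsewhere in the paper. Flagging it as ``the main obstacle'' does not discharge it; as written, the proposal proves the corollary for five of the six cases and sketches the sixth.

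The idea you are missing --- and the one the paper uses to dispatch \emph{all} the two-dimensional cases uniformly, with no case analysis and no explicit invariants --- is that the image of $W$ in $\GL(\tor_0)$ is generated by pseudoreflections (each generating reflection of $W$ on $\tor$ fixes a hyperplane, hence its image on the quotient $\tor_0=\tor/\z$ fixes a hyperplane or is trivial), and a finite group generated by pseudoreflections on a $2$-dimensional space has a polynomial ring of invariants in \emph{any} characteristic, by \cite[Th.~5.1]{Nakajima} or \cite[Prop.~7.1]{KemperMalle}. This applies verbatim to $(\PSO_8,2)$, to $(\PGL_4,2)$, and to $(G_2,2)$, with no need for Demazure, for the explicit generators of Example~\ref{kg2}, or for the Jacobian-plus-degrees criterion you propose for $D_4$. (Your plan for $D_4$ would in fact succeed --- the image of $W(D_4)$ in $\GL(\tor_0)\cong\GL_2(\F_2)$ is a subgroup of $S_3$ generated by transvections, so the invariants are Dickson-type polynomials of degrees dividing $2$ and $3$ --- but you need to actually carry it out, or better, cite the general pseudoreflection theorem and delete most of your case analysis.)
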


\begin{proof}  
The type of $G$ and $\car k$ appear as a row in Table \ref{adj.small.table}.
If $\dim k[L(\hst)]^G = 1$, then $k[L(\hst)]^G$ is a polynomial ring for dimension reasons, \cite[Prop.~6.1]{BGL}, so assume $\dim k[L(\hst)]^G = 2$.

If $Z(\Gt)$ is \'etale, we apply the Chevalley Restriction Theorem.  Otherwise (type of $G, \car k) = (A_3, 2)$ or $(D_4, 2)$ and we apply Example \ref{coCRT}.  In either case, $k[L(\hst)]^G$ is isomorphic to the ring of invariant polynomials of a pseudoreflection group (the Weyl group) acting on a 2-dimensional space, so it is a polynomial ring by \cite[Th.~5.1]{Nakajima} or \cite[Prop.~7.1]{KemperMalle}.
\end{proof}

See Proposition \ref{adj.cofree} below for a stronger version of Corollary \ref{adj.inv}.

\begin{prop} \label{pgl.ad}
Let $G = \PGL_n$ for some $n \ge 2$ over a field $k$.  The following are equivalent:
\begin{enumerate}
\item \label{pgl.ad.hst} $k[L(\hst)]^G$ is a polynomial ring.
\item \label{pgl.ad.ad} $k[\g]^G$ is a polynomial ring.
\item \label{pgl.ad.n} $n \le 4$ or $\car k$ does not divide $n$.
\end{enumerate}
\end{prop}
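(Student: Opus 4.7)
The plan is to translate (1) and (2) into statements about invariants of the Weyl group $W = S_n$ acting on related representations, via the Chevalley Restriction Theorem for (2) and Example \ref{coCRT} for (1), and then to analyze separately the cases $p \nmid n$, $p \mid n$ with $n \le 4$, and $p \mid n$ with $n \ge 5$.

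If $p := \car k$ does not divide $n$, the isogeny $\SL_n \to \PGL_n$ is \'etale, so its differential gives an isomorphism $\sl_n \cong \pgl_n \cong L(\hst)$ of $\PGL_n$-modules. By Example \ref{SL.ad}, $k[\sl_n]^{\SL_n}$ is a polynomial ring in the non-trace coefficients $c_2, \ldots, c_n$ of the characteristic polynomial, and all three rings in question coincide with this, so (1), (2), (3) hold simultaneously.

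If $p \mid n$, Chevalley restriction (applicable since $\PGL_n$ is not $\Sp_{2\ell}$ in characteristic $2$) yields $k[\pgl_n]^{\PGL_n} \cong k[\Lie(\Tb)]^{S_n}$, with $\Lie(\Tb) \cong k^n/k(1,\ldots,1)$ of dimension $n-1$; and Example \ref{coCRT} yields $k[L(\hst)]^{\PGL_n} \cong k[\tor_0]^{S_n}$, with $\tor_0 \subset \Lie(\Tb)$ of dimension $n-2$. For $n \le 4$ both rings are polynomial, to be verified by explicit computation. In the case $(n,p) = (2,2)$ the relevant $S_n$-actions are trivial. For $(3,3)$, the transpositions act on $\tor_0 \cong k$ by $-1$ yielding $k[x^2]$, while on $\Lie(\Tb) \cong k^2$ one exhibits $S_3$-invariant generators of degrees $1$ and $6$ (for instance $e_1$ and $e_1^3 e_3 - e_1^2 e_2^2 + e_2^3$). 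For $(4,2)$, the $S_4$-action on $\tor_0$ factors through $S_3 = S_4/V_4$ as the reflection representation of $S_3$ in characteristic $2$ covered by Example \ref{kg2}, and on $\Lie(\Tb) \cong k^3$ one exhibits translation-invariant symmetric generators of degrees $1$, $4$, and $6$, such as $e_1$, $e_2^2 + e_1 e_3$, and $e_3^2 + e_1 e_2 e_3 + e_1^2 e_4$.

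For $p \mid n$ with $n \ge 5$, the plan is to show neither invariant ring is polynomial. The cleanest approach is to apply the modular refinement of the Shephard--Todd--Chevalley theorem: polynomiality of $k[V]^G$ forces the pointwise stabilizer in $G$ of every point of $V$ to act as a pseudoreflection group on $V$. Choosing a point of $\tor_0$ or of $\Lie(\Tb)$ fixed by a Young subgroup containing an $S_p$-factor, one finds that the stabilizer contains order-$p$ unipotent elements that act non-trivially on the ambient space but are not pseudoreflections as soon as $n \ge 5$, obstructing polynomiality. The main obstacle is making this stabilizer argument uniform in $n$ and simultaneously valid for both $\tor_0$ and $\Lie(\Tb)$; a concrete fallback is to carry out a Hilbert-series or minimal-generator computation in the $S_n$-invariant ring and to exhibit an essential invariant in a degree incompatible with any polynomial structure.
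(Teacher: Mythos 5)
Your reduction to Weyl-group invariants and your handling of the cases $\car k \nmid n$ and $\car k \mid n$ with $n \le 4$ follow the same route as the paper (Chevalley restriction plus Example \ref{coCRT}, then explicit generators for $(n,p)=(4,2)$; note the paper certifies its degree-$1,4,6$ generators with the Jacobian criterion of \cite[Th.~3.9.4]{DerksenKemper}, a check your candidate polynomials would also require, and it disposes of $n=2,3$ and of $k[L(\hst)]^G$ for $n\le 4$ by quoting the two-dimensional pseudoreflection classification rather than by hand). The genuine gap is in the case $\car k \mid n$, $n \ge 5$, which is the substance of the proposition. First, the criterion you invoke is misstated: a stabilizer may well contain order-$p$ elements that are not pseudoreflections and still be \emph{generated} by pseudoreflections ($S_n$ itself contains $n$-cycles); the obstruction in Serre's theorem is failure of the pointwise stabilizer of a subspace to be generated by its pseudoreflections (indeed, failure of its invariant ring to be polynomial), not the mere presence of non-pseudoreflections.

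Second, even the corrected criterion provably cannot close this case. For $k[\g]^G \cong A_Y^{S_n}$, the pointwise stabilizer of any point (or subspace) of $Y\otimes \bar{k}$ is the Young subgroup attached to the common refinement of level-set partitions, hence is generated by transpositions; and a transposition fixes a hyperplane of $Y$, so it \emph{is} a pseudoreflection. Thus Serre's necessary condition holds at every point and detects nothing about $A_Y^{S_n}$. For $A_Z^{S_n}$ the same collapse occurs in concrete cases: for $(n,p)=(6,2)$, an element $\sigma$ with $\sigma v - v$ a nonzero multiple of $\sum x_i$ would force the value multiset of $v$ to be a union of three pairs $\{a,a+1\}$, giving $\sum v_i = 3 = 1 \ne 0$, so again every point stabilizer is a Young subgroup generated by pseudoreflections. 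This is why the paper does not argue this way: it quotes the classifications of Nakajima \cite[\S4]{Nakajima} and Kemper--Malle \cite[\S5]{KemperMalle}, which show by other means that $A_Y^{S_n}$ and $A_Z^{S_n}$ are not polynomial for $n>4$ with $p\mid n$. Your fallback of an ad hoc Hilbert-series or generator computation is not uniform in $n$ and so cannot replace that input; as written, the proposal does not prove the implication $(1)\Rightarrow(3)$ or $(2)\Rightarrow(3)$.
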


\begin{proof}
Suppose first that $\car k$ does not divide $n$.  Then the representations $\sl_n$, $\pgl_n$, and $L(\hst)$ are all naturally identified with each other and $k[\sl_n]^{\SL_n}$ is a polynomial ring as in Example \ref{SL.ad}.

Therefore, we restrict our attention to the case where $\car k$ does divide $n$, applying Example \ref{coCRT} for $k[L(\hst)]^G$ and the Chevalley Restriction Theorem for $k[\g]^G$.  As the roots of $G$ all have the same length, we may identify the root system with its dual and so identify the toral subalgebra $\tor$ of $\g$ with the weight lattice tensored with $k$ and $\tor_0$ with the subspace spanned by the roots. 

If we identify the Weyl group with the symmetric group $S_n$ and consider its permutation representation $X$ with basis $x_1, \ldots, x_n$, then the ring $k[\tor]$ of functions on $\tor$ with the action by the Weyl group is identified with the symmetric algebra $A_Y$ on the subspace $Y$ of $X$ of elements whose coordinates sum to zero.  Moreover, the space of characters on $T$ that vanish on $\tor_0$ are a 1-dimensional subspace of $Y$ and contain $\sum x_i$, so the functions on $k[\tor_0]$ are identified with the symmetric algebra $A_Z$ on $Z := Y/ k\sum x_i$.  If $n > 4$, then $A_Y^{S_n}$ and $A_Z^{S_n}$ are not polynomial rings, see \cite[\S4]{Nakajima} and \cite[\S5]{KemperMalle}, proving the claim in that case.

If $n \le 4$, then $\dim k[L(\hst)]^G \le 2$ and
we have already observed that $k[L(\hst)]^G$ is a polynomial ring in Corollary \ref{adj.inv}.  If $n = 2$ or 3, then $k[\g]^G$ is by the Chevalley Restriction Theorem the functions in 1 or 2 variables that are invariant under a finite reflection group, so for the same reason we conclude that $k[\g]^G$ is a polynomial ring. 

Finally, consider $k[\g]^G$ in the case $n = 4$ and $k = \F_2$.  The ring $A_Y$ has generators
\begin{equation} \label{AXY}
y_1 = x_1 - x_2, \quad y_2 = x_2 - x_3, \quad y_3 = x_3 - x_4.
\end{equation}
Set
\begin{gather*}
f_1 = y_1 + y_3,\\
f_2 = y_1^2 y_2^2 + y_2^4 + y_1^2 y_2 y_3 + y_1 y_2^2 y_3 + y_1^2 y_3^2 + y_1 y_2 y_3^2 + 
 y_2^2 y_3^2, \quad \text{and} \\
  f_3 = y_1^4 y_2^2 + y_1^2 y_2^4 + y_1^4 y_2 y_3 + y_1 y_2^4 y_3 + y_1^4 y_3^2 + 
 y_1^2 y_2^2 y_3^2 + y_2^4 y_3^2 + y_1^2 y_3^4 + y_1 y_2 y_3^4 + y_2^2 y_3^4
 \end{gather*}
in $A_Y$.  Rewriting these using \eqref{AXY}, we find that $f_1 = \sum x_i$ is in $A_Y^{S_4}$ and similarly for $f_2$ and $f_3$.  The determinant of the Jacobian matrix with $(i, j)$ entry $\partial f_i / \partial y_j$ is not zero (e.g., the term $y_1^5 y_2^2 y_3$ appears) and $\prod \deg f_i = | S_4 |$, so the $f_i$ are algebraically independent and $k[\g]^G \cong A_Y^{S_4} = k[f_1, f_2, f_3]$ is a polynomial ring by the criterion from \cite[Th.~3.9.4]{DerksenKemper}.
\end{proof}

\begin{eg} \label{not.cofree}
Let $V$ be an irreducible and faithful representation of a simple algebraic group $G$ over $k$.  One can ask whether the property that $G_v \ne 1$ for generic $v \in V$ (denoted by (ST) in \cite[\S8]{PoV}) is equivalent to the property that $k[V]^G$ is a polynomial ring (denoted by (FA) in ibid.).  If $\car k = 0$, then the two properties are equivalent, see \cite[Th.~8.8]{PoV} and \cite{KPV}.

If $G = \PGL_n$ such that $\car k$ divides $n$ and $n \ge 5$ and $V = L(\hst)$, then $(G, V)$ satisfies (ST) (Lemma \ref{adj.eg}) but not (FA) (Proposition \ref{pgl.ad}).

Alternatively, take $G = \PSp_{2n}$ with the same hypotheses on $n$.  The representation $V = L(\omega_2)$ is faithful and irreducible and $G_v \ne 1$ for generic $v \in V$, see Table \ref{meta.small}.  Example 8.5 in \cite{GG:simple} shows that $k[V]^G$ is isomorphic to the ring of invariants in the preceding paragraph, so it too is not a polynomial ring.
\end{eg}

\section{Summary of some recent results} \label{summ.sec}

Recent results of the authors, which rely on L\"ubeck's paper \cite{luebeck}, combine with that paper to give the following results.
\begin{thm} \label{summ.thm}
Let $V$ be an irreducible and faithful representation of a simple linear algebraic group $G$ over an algebraically closed field $k$.
\begin{enumerate}
\item \label{summ.big} Suppose $\dim V > \dim G$.  Then $G_v$ is finite for generic $v \in V$, and $\dim k[V]^G = \dim V - \dim G$.  Moreover, $G_v \ne 1$ for generic $v \in V$  if and only if $(G, \car k, V)$ is listed in Table \ref{meta.big} up to graph automorphism. 
\item \label{summ.eq} If $\dim V = \dim G$, then $G$ is adjoint, $V$ is the adjoint representation $\Lie(G) = L(\hst)$,
$\car k$ is not special for $G$,
there is a s.g.p.\ (whose identity component is a maximal torus),
and $\dim k[V]^G = \rank G$.
\item \label{summ.small} If $\dim V < \dim G$, then $(G, \car k, V)$ is listed in Table \ref{meta.small} up to graph automorphism or $V = L(\hst)$.
\end{enumerate}
In particular, $G_v$ is finite for generic $v \in V$ if and only if $\dim V > \dim G$.
\end{thm}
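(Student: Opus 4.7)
The proof is essentially an assembly argument: L\"ubeck's dimension tables in \cite{luebeck} handle the low-dimensional regime, while the stabilizer classifications of \cite{GurLawther1}, \cite{GG:large}, \cite{GG:irred}, and \cite{GG:special} handle the high-dimensional regime. I would proceed by treating the three dimension cases in turn, and then deduce the final sentence.

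For the case $\dim V \le \dim G$, I would simply read off L\"ubeck's tables of faithful irreducible representations of a simple $G$. Those with $\dim L(\lambda) \le \dim G$ are exactly the entries of Table \ref{meta.small} together with the adjoint representation $L(\hst)$. When $\dim V = \dim G$, inspection forces $V = L(\hst)$; moreover, $G$ must be adjoint and $\car k$ must not be special for $G$, since otherwise the discussion preceding Lemma \ref{adj.eg} shows either that $G \to \GL(L(\hst))$ has non-\'etale kernel (violating faithfulness) or that $\dim L(\hst) < \dim \g$. Once this identification is made, Lemma \ref{adj.eg} supplies the s.g.p.\ claim and computes $\dim k[L(\hst)]^G = \rank G$.

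For the case $\dim V > \dim G$, the starting point is the standard orbit-dimension identity, which gives $\dim k[V]^G = \dim V - \dim G + \dim G_v$ for generic $v \in V$; so the equality $\dim k[V]^G = \dim V - \dim G$ is equivalent to $G_v$ being finite generically. I would establish finiteness by contrapositive: the main results of \cite{GG:large}, sharpened by \cite{GG:irred} and \cite{GG:special} to cover infinitesimal and special-characteristic phenomena, produce an explicit list of $(G, \car k, V)$ for which $\dim G_v > 0$ generically, and each entry on that list already satisfies $\dim V \le \dim G$. For the ``moreover'' refinement, I would combine the determination of $G_v(k)$ in \cite{GurLawther1} with the Lie-algebra information in \cite{GG:irred}, \cite{GG:special} to compile Table \ref{meta.big}.

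The final ``in particular'' then drops out: when $\dim V > \dim G$ the stabilizer is finite by construction, while when $\dim V = \dim G$ Lemma \ref{adj.eg} gives an identity component equal to a maximal torus, and when $\dim V < \dim G$ either $V = L(\hst)$ (handled by the adjoint case) or a case-by-case inspection of Table \ref{meta.small} shows $\dim G_v > 0$ generically. The principal obstacle is not conceptual but bookkeeping: one must reconcile the different fundamental-weight conventions of \cite{luebeck} versus \cite{GurLawther1}, uniformly dispose of graph automorphisms and Frobenius twists, and verify that the union of exceptional cases drawn from the four source papers collapses correctly onto Tables \ref{meta.small} and \ref{meta.big} without duplication or omission.
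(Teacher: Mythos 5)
Your proposal matches the paper's proof in both structure and ingredients: the paper likewise assembles part (1) by taking Table \ref{meta.big} to be the union of the exceptional lists of \cite{GurLawther1} and \cite{GG:special} (minus non-faithful entries), deduces $\dim k[V]^G = \dim V - \dim G$ from finiteness of $G_v(k)$ in each row, and reads parts (2) and (3) off L\"ubeck's tables, handling $\dim V = \dim G$ via the adjoint-representation analysis. The one step you leave implicit is the reduction to restricted highest weights when $\dim V \le \dim G$ (the easy dimension argument of \cite[Lemma 1.1]{GG:irred}), without which invoking \cite{luebeck} is not justified; otherwise the argument is the same.
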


We now justify the columns in Tables \ref{meta.small} and \ref{meta.big} concerning $G_v$.
The $k$-points $G(k)_v = G_v(k)$ for generic $v \in V$ are from \cite{GurLawther}.

 In Table \ref{meta.big}, the Lie algebra $\g_v$ was computed in  \cite{GG:irred} and \cite{GG:special}.  Since $G(k)_v$ is finite in all cases, if additionally $\g_v = 0$ then $G_v$ is \'etale (in particular, smooth) and so completely described by $G(k)_v$.  In particular, $G_v$ is commutative if and only if $G(k)_v$ is an abelian group.  If $\g_v \ne 0$, then $G_v$ is not smooth, and we study $G_v$ in \S\ref{vinberg.sec} and \S\ref{inf.sec}.


\begin{table}[htb]
\begin{tabular}{@{}ccccc@{}}\toprule
$G$&$\car k$&$V$&$G_v(k)$&$\dim k[V]^G$ \\ \midrule\midrule
$A_\ell$ ($\ell \ge 1$)&any&$k^{\ell+1}$&$A_{\ell-1} U_\ell$&0 \\
$A_\ell$ ($\ell \ge 1$)&$\ne 2$&$S^2(k^{\ell+1})$&$D_{(\ell+1)/2}$ or $B_{\ell/2}$&1 \\
$A_\ell$ (odd $\ell \ge 3$)&any&$\wedge^2(k^{\ell+1})$&$C_{(\ell+1)/2}$&1 \\
$A_\ell$ (even $\ell \ge 4$)&\multicolumn{2}{c}{$\cdots$ same as line above $\cdots$}&$C_{\ell/2} U_\ell$&0 \\
$\SL_6/\mu_3$&any&$\wedge^3 k^6$&$A_2^2 . \Z/{(2,p)}$& 1 \\
$\SL_7$&any&$\wedge^3 k^7$&$G_2$&1 \\
$\SL_8$&any&$\wedge^3 k^8$&$A_2.\Z/{(2,p)}$&1 \\ \midrule
$\SO_{2\ell+1}$ ($\ell \ge 2$)&$\ne 2$&$L(\omega_1)$ (natural)&$D_\ell$ &1\\
$\Spin_7$&any&$L(\omega_3)$ (spin)&$G_2$&1 \\
$\Spin_9$&any&$L(\omega_4)$ (spin)&$B_3$&1 \\
$\Spin_{11}$&any&$L(\omega_5)$ (spin)&$A_4.\Z/{(2,p)}$&1 \\
$\Spin_{13}$&any&$L(\omega_6)$ (spin)&$A_2^2.(\Z/{(2,p)})^2$&2 \\ \midrule
$\Sp_{2\ell}$ ($\ell \ge 2$)&any&$L(\omega_1)$ (natural)&$C_{\ell-1} U_{2\ell-1}$&0 \\
$\Sp_6$&$\ne 2$&$L(\omega_3)$ (``spin'')&$\tilde{A}_2$&1 \\   
$\PSp_6$&any&$L(\omega_2)$&$C_1^3 . \Z/(3,p)$&$2-\eps$ \\
$\PSp_8$&\multicolumn{2}{c}{$\cdots$ same as line above $\cdots$}&$C_1^4.(\Z/(2,p))^2$& $3-\eps$ \\ 
$\PSp_{2\ell}$ ($\ell \ge 5$)&\multicolumn{2}{c}{$\cdots$ same as line above $\cdots$}&$C_1^\ell$& $\ell-1-\eps$ \\ \midrule
$\SO_{2\ell}$ ($\ell \ge 4$)&any&$L(\omega_1)$ (natural)&$B_{\ell-1}$& 1 \\
$\Spin_{10}$&any&spin&$B_3U_8$&0 \\
$\HSpin_{12}$&any&half-spin&$A_5.\Z/{(2,p)}$&1 \\
$\Spin_{14}$&any&spin&$G_2^2.\Z/{(2,p)}$&1 \\ \midrule
$G_2$&$\ne 2$&$L(\omega_1)$ (natural)&$A_2$&$1$ \\ 
$G_2$&$2$&same as line above&$A_1U_5$&$0$ \\
%
$F_4$&any&$L(\omega_4)$ (natural)&$D_4$&$2-\eps$ \\  
%
$E_6$&any&$L(\omega_1)$ (minuscule)&$F_4$&1 \\
%
$E_7$&any&$L(\omega_7)$ (minuscule)&$E_6.\Z/{(2,p)}$&1 \\ \bottomrule
%
\end{tabular}
\caption{Irreducible faithful representations $V$ for a simple algebraic group $G$ over an algebraically closed field $k$ of characteristic $p$ such that $\dim V \le \dim G$, except for $L(\hst)$ and up to graph automorphism.  The symbol $\eps$ represents 0 or 1, where the value is determined by $\ell$ and $\car k$.} \label{meta.small}
\end{table}


\begin{sidewaystable}
\begin{centering}
\begin{tabular}{@{}cccccccc@{}} \toprule
$G$&$\car k$&$V$&$\dim V$&$G_v$&$G_v$ smooth?&$G_v$ commutative?&$\dim k[V]^G$ \\ \midrule\midrule
$\SL_2$&$\ne 2, 3$&$S^3 k^2$&4&$\Z/3$ &yes& yes&1\\
$\SL_2/\mu_2$&$\ne 2, 3$&$S^4 k^2$&5&$(\Z/2)^2$ & yes&yes&2\\
$\SL_3/\mu_3$&$\ne 2, 3$&$S^3 k^3$&10 &$(\Z/3)^2$ & yes&yes&2\\
$\SL_4$&3&$L(\omega_1 + \omega_2)$&16&$\Alt_5$&yes&no&1 \\
$\SL_4/\mu_4$&$\ne 2$&$L(2\omega_2)$&$20-\eps$&$(\Z/2)^4$ & yes& yes&$5-\eps$ \\
$\SL_4$&$p$ odd&$L(p^e \omega_1 + \omega_2)$, $e \ge 1$&24&$\mu_{p^e}$&no&yes&9 \\
$\SL_4/\mu_2$&2&$L(2^e \omega_1 + \omega_2)$, $e \ge 2$&24&$\mu_{2^e}$&no&yes&9 \\
$\SL_8/\mu_4$&$\ne 2$&$\wedge^4 k^8$&70&$(\Z/2)^6$&yes&yes&7 \\
$\SL_8/\mu_4$&$2$&\multicolumn{2}{c}{$\cdots$ same as line above $\cdots$}&$(\Z/2)^3 \times (\mu_2)^3$&no&yes&7 \\
$\SL_9/\mu_3$&$\ne 2, 3$&$\wedge^3 k^9$&84&$(\Z/3)^4$&yes&yes&4 \\
$\SL_9/\mu_3$&2&\multicolumn{2}{c}{$\cdots$ same as line above $\cdots$}&$(\Z/3)^4.(\Z/2)$&yes&no&4\\
$\SL_9/\mu_3$&3&\multicolumn{2}{c}{$\cdots$ same as line above $\cdots$}&$(\Z/3)^2 \times (\mu_3)^2$&no&yes&4\\
$A_\ell$&$p \ne 0$&$L(\omega_1 + p^e \omega_\ell)$, $e \ge 1$&$(\ell +1)^2$&$\PSU_{\ell+1}(p^e)$ & yes &no&1 \\
$A_\ell$&$p \ne 0$&$L(\omega_1 + p^e \omega_1)$, $e \ge 1$&$(\ell +1)^2$&$\PSL_{\ell+1}(p^e)$ & yes&no&1 \\ \midrule
$\Spin_5$&5&$L(\omega_1 + \omega_2)$&12&$\mu_5$&no&yes&2 \\
$\SO_{2\ell+1}$ ($\ell \ge 2$) & $\ne 2$&$L(2\omega_1)$ (``$S^2$'') &$2\ell^2+3\ell-\eps$ & $(\Z/2)^{2\ell}$& yes&yes&$2\ell-\eps$ \\ \midrule
$\Sp_8$&3&$L(\omega_3)$ (``$\wedge^3$'')&40&$(\mu_3)^2$&no&yes&4 \\
$\PSp_8$&$\ne 2$&$L(\omega_4)$ (``spin'')&$42-\eps$&$(\Z/2)^6$ & yes&yes&$6-\eps$ \\ \midrule
%
$\SO_{2\ell}/\mu_2$ ($\ell \ge 4$)&$\ne 2$&$L(2\omega_1)$ (``$S^2$'')&$2\ell^2+\ell-1-\eps$&$(\Z/2)^{2\ell - 2}$ &yes&yes&$2\ell-1-\eps$ \\
$\HSpin_{16}$&$\ne 2$&half-spin&128&$(\Z/2)^8$&yes&yes&8 \\
$\HSpin_{16}$&2&\multicolumn{2}{c}{$\cdots$ same as line above $\cdots$}&$(\Z/2)^4 \times (\mu_2)^4$&no&yes&8 \\ \bottomrule
\end{tabular}
\caption{Irreducible faithful representations $V$ for a simple algebraic group $G$ over an algebraically closed field $k$ such that $\dim V > \dim G$ and $G_v \ne 1$, up to graph automorphism.  In the dimensions, $\eps$ represents 0 or 1; the value it takes depends on $\ell$ and $\car k$.} \label{meta.big}
\end{centering}
\end{sidewaystable}


\begin{sidewaystable}
\begin{tabular}{@{}ccccccc@{}} \toprule
$G$&$\car k$&$V$&$\dim V$&$\dim k[V]^G$ \\ \midrule\midrule
$\SL_3$&$\ne 2, 3$&$S^4k^3$&15&7 \\
$\SL_3$&$\ne 2$&$L(\omega_1+2\omega_2)$&15&7 \\
$A_3$&$p \ne 0$&$L(p^e\omega_1 + \omega_2)$ or $L(\omega_1 + p^e\omega_2)$ for $e \ge 1$&24&9 \\
$\SL_5$&3&$L(2\omega_2)$&45&21 \\
$\SL_4$&$\ne 3$&$L(\omega_1 + \omega_2)$&20&5 \\
$\SL_5$&$\ne 3$&same as line above&40&16 \\
$\SL_n/\mu_{(n,3)}$ ($5 \le n \le 9$) & 3 & same as line above & $(n-1)n(n+4)/6$ & $(n^3  - 3 n^2 - 4 n + 6)/6$ \\
$\SL_5$&$\ne 2$&$L(\omega_1 + \omega_3)$&45&21 \\
$\SL_5$&2&same as line above&40 & 16 \\
$\SL_9$ & all & $\wedge^4 k^9$&126&46 \\
$\SL_n/\mu_{(n,3)}$ ($4 \le n \le 8$) & $\ne 2, 3$& $S^3 k^n$ & $\binom{n+2}{3}$ & $\binom{n+2}{3} - n^2 + 1$ \\
$\SL_n/\mu_{(n,3)}$ ($10 \le n \le 14$) & all & $\wedge^3 k^n$ & $\binom{n}{3}$ & $\binom{n}{3} - n^2  + 1$ \\ \midrule  
$\Spin_{5}$&$\ne 5$&$L(\omega_1 + \omega_2)$&16&6 \\
$\Spin_{7}$&$\ne 2$&$L(2\omega_3)$&35&14\\
$\Spin_7$&7&$L(\omega_1 + \omega_3)$&40&19 \\
$\Spin_{15}$&all&$L(\omega_7)$ (spin)&128&23\\
$\Spin_{17}$&all&$L(\omega_8)$ (spin)&256&120\\ \midrule 
$\Sp_{4}$&$p \ne 0$&$L(\omega_1 + p^e \omega_1)$ for $e \ge 1$&16&6\\
$\Sp_8$&$\ne 3$&$L(\omega_3)$ (``$\wedge^3$'')&48&12 \\
$\Sp_{10}$&2&same as line above&100&45 \\ \midrule
$D_4$&2&$L(\omega_1 + \omega_2)$&48&20 \\ \midrule
$G_2$&$\ne 2$&$L(2\omega_2)$&$27-\eps$ & $13-\eps$ \\ \bottomrule
\end{tabular}
\caption{Some irreducible faithful representations $V$ for a simple algebraic group $G$ over an algebraically closed field $k$ such that $\dim G < \dim V < 2 \dim G$.  Combining with Table \ref{meta.big} provides a full list up to graph automorphism.  The minimum value for $\dim k[V]^G$ in this table is 5, occurring for $(\SL_4, {\ne 3}, L(\omega_1 + \omega_2))$ and for $(\SL_4, {\ne 2, 3}, S^3 k^4)$.} \label{meta.few}
\end{sidewaystable}

\begin{proof}
\eqref{summ.big}: Assume $\dim V > \dim G$.  The rows in Table \ref{meta.big} are a union of the rows in Table 1 in each of \cite{GG:special} (those with $\g_v \ne 0$) and \cite{GurLawther} (those with $G_v(k) \ne 1$), although we have omitted those entries corresponding to non-faithful representations, such as spin representations of $\Sp_{2\ell}$ when $\car k = 2$.  Conversely, if $\g_v = 0$ and $G(k)_v = 1$, then $G_v = 1$.  

Note that in each row of Table \ref{meta.big}, $G_v(k)$ is finite.  This gives the claim on $\dim k[V]^G$.

For the remainder of the proof, we assume that $\dim V \le \dim G$, so the highest weight of $V$ is restricted by an easy dimension argument as in \cite[Lemma 1.1]{GG:irred}.  That is, $V$ is among the representations enumerated in \cite{luebeck}.

If $\dim V = \dim G$, checking the tables in \cite{luebeck} verifies that $V$ is the adjoint representation and $\car k$ is not special.  Moreover, as $\dim V = \dim G$, the center of the simply connected cover of $G$ is \'etale and the generic stabilizer is computed in \cite[Prop.~9.2]{GG:edp}.  This verifies \eqref{summ.eq}.

For \eqref{summ.small}, the list in Table \ref{meta.small} is somewhat shorter than in \cite{luebeck}, because we have omitted those representations that factor through the very special isogeny, i.e., those $\la$ that vanish on the short simple roots such as the spin representations of type $C$ when $\car k = 2$.

For the final claim, note that for $V = L(\hst)$, $G_v$ is not finite for generic $v \in V$ by Lemma \ref{adj.eg}.
\end{proof}

\begin{rmk} \label{Spin13}
In Table \ref{meta.big}, the stabilizer $G_v$ in $\Spin_{13}$ is $(\SL_3 \times \SL_3) \rtimes (\Z/2 \times \Z/2)$, as described in \cite[Prop.~5.2.9]{GurLawther}.  The $\Z/2$'s are generated by an element that acts as an outer automorphism on each $\SL_3$ and an element that interchanges the two $\SL_3$'s.  This corrects a mistake in \cite[Prop.~9.2]{GG:spin}, where one of the $\Z/2$ factors was omitted.
\end{rmk}

The following is an analogue for group schemes of a result proved for $G(k)$ in \cite[Cor.~11]{GurLawther}.  Recall that an algebraic group $G$ is said to act \emph{generically freely} on $V$ if $G_v = 1$ for generic $v \in V$.

\begin{cor} \label{thm.ross}
Let $G$ be a simple linear algebraic group acting faithfully and irreducibly on a representation $V$.  If $V$ has a nonzero weight space with multiplicity $> 1$, then $G$ acts generically freely on $V$.
\end{cor}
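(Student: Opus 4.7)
The plan is to bootstrap from the analogous theorem for the abstract group of $k$-points, \cite[Cor.~8]{GurLawther1}, which says that the multiplicity hypothesis forces $G(k)_v = 1$ for generic $v \in V$. Since $G_v$ is the trivial group scheme exactly when $G_v(k) = 1$ and $\g_v = 0$, it suffices to prove the additional assertion that $\g_v = 0$ for generic $v$.

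For this I will use the trichotomy in Theorem \ref{summ.thm}. First consider $\dim V \le \dim G$: by parts \eqref{summ.eq} and \eqref{summ.small}, $V$ is either the adjoint representation $L(\hst)$ or $(G, \car k, V)$ belongs to Table \ref{meta.small}. For $L(\hst)$ the nonzero weights are exactly the roots of $G$, all of multiplicity $1$, so the multiplicity hypothesis fails. For each entry of Table \ref{meta.small}, inspection of the weight data in \cite{luebeck} shows every nonzero weight likewise has multiplicity $1$, again contradicting the hypothesis.

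So one may assume $\dim V > \dim G$. By Theorem \ref{summ.thm}\eqref{summ.big}, $G_v$ is finite generically, and $G_v \ne 1$ generically only if $(G, \car k, V)$ appears in Table \ref{meta.big}. The rows of that table where $G(k)_v \ne 1$ are already excluded by \cite[Cor.~8]{GurLawther1}. What remains is to examine the rows where $G(k)_v = 1$ but $\g_v \ne 0$, and to verify that for each such representation every nonzero weight has multiplicity $1$.

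The main obstacle is precisely this final case-by-case check, which is mechanical but unavoidable: one lists the rows of Table \ref{meta.big} with $\g_v \ne 0$ (drawn from \cite{GG:irred} and \cite{GG:special}), discards those already handled by \cite[Cor.~8]{GurLawther1}, and then reads off weight multiplicities from \cite{luebeck} to confirm that none of the surviving representations realizes a nonzero weight space of multiplicity exceeding $1$. Combined with the reduction above, this closes the argument.
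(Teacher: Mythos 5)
Your proposal is correct and coincides with the paper's own argument — in fact the paper explicitly offers your route as one of three equivalent phrasings: invoke \cite[Cor.~8]{GurLawther1} to force $G(k)_v = 1$, then reduce via Theorem \ref{summ.thm} to checking that the (four) faithful irreducible representations with nonzero but infinitesimal generic stabilizer in Table \ref{meta.big} have all nonzero weight spaces of multiplicity $1$. The only cosmetic difference is that you verify the multiplicity-one property for $L(\hst)$ and Table \ref{meta.small} by inspecting \cite{luebeck}, whereas this also follows at once from \cite[Cor.~8]{GurLawther1} since those stabilizers are positive-dimensional.
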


\begin{proof}
We apply Theorem \ref{summ.thm} and verify that the nonzero eigenvalues have multiplicity 1 in each faithful irreducible representation $V = L(\la)$ that is not generically free.  

The list of $L(\la)$ where the nonzero eigenvalues have multiplicity 1 for $k$ of prime characteristic has been 
obtained in \cite[Prop.~8]{TestermanZalesski}, leveraging \cite[6.1]{seitzmem} and \cite{ZalesskiSuprunenko}.

Alternatively, for each $\la$, if the irreducible representation over $\C$ with the same highest weight has nonzero eigenvalues of multiplicity 1 (the list of such is known from \cite[Th.~4.6.3]{Howe:wmf}), then we are done.  The remaining $\la$ can be treated in an ad hoc manner.

Yet another way to phrase the proof is to suppose that $V$ has a nonzero weight space with multiplicity 1 and apply 
\cite[Cor.~11]{GurLawther} to deduce that $G_v(k) = 1$ for generic $v \in V$.  This reduces the proof to verifying that nonzero eigenvalues have multiplicity 1 in the faithful irreducible representations that have nonzero but infinitesimal generic stabilizer.  There are just four of these, appearing in Table \ref{meta.big}.  (While the generic stabilizers for these cases are determined later in this paper, at this point we only need to know which representations have infinitesimal generic stabilizers, which is known from combining the results of \cite{GurLawther} and \cite{GG:special}.)
\end{proof}

\begin{cor}
Let $V$ be an irreducible representation of a simple algebraic group $G$.  Then either $G$ has an open orbit in $V$ or there is a dense open subset of $V$ consisting of closed $G$-orbits.
\end{cor}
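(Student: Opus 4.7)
The plan is to combine the existence of an s.g.p.\ (Theorem \ref{sgp}), Matsushima's theorem, and the dimension trichotomy of Theorem \ref{summ.thm}. First, by Lemma \ref{simple.quo} we may replace $G$ by $G/\ker[G \to \GL(V)]$, which is simple and has the same orbits on $V$; so we may assume that $V$ is faithful.

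By Theorem \ref{sgp} there is an s.g.p.\ $G_*$ and a dense open $U \subseteq V$ with $G_u$ conjugate to $G_*$ for every $u \in U(k)$; each such orbit $G \cdot u$ has dimension $\dim G - \dim G_*$. If $\dim G - \dim G_* = \dim V$, every $G \cdot u$ is open in $V$, and since orbits are disjoint while any two nonempty open subsets of the irreducible variety $V$ meet, all these orbits coincide with a single open orbit. Otherwise $\dim G - \dim G_* < \dim V$, and we aim to show that each $G \cdot u$ for $u \in U$ is closed in $V$. By Matsushima's theorem (valid in positive characteristic by work of Richardson), for reductive $G$ acting on the affine variety $V$, the orbit $G \cdot u$ is closed in $V$ if and only if $(G_u)_\red$ is a reductive algebraic group; conjugation preserves reductivity, so it suffices to show that $(G_*)_\red$ is reductive.

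This we verify via Theorem \ref{summ.thm}. When $\dim V > \dim G$, \eqref{summ.big} gives that $G_*$ is finite, hence reductive. When $\dim V = \dim G$, \eqref{summ.eq} gives $V = L(\hst)$ and Lemma \ref{adj.eg} shows the identity component of $G_*$ is a maximal torus, hence reductive. When $\dim V < \dim G$ and $V = L(\hst)$, the same argument via Lemma \ref{adj.eg} applies. The remaining possibility is $\dim V < \dim G$ with $(G, \car k, V)$ appearing in Table \ref{meta.small}.

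The main obstacle is then the resulting row-by-row inspection of Table \ref{meta.small}. Using the generic stabilizers recorded there (following \cite{GurLawther1} and \cite{GG:irred}), one verifies that in each row either $\dim G - \dim G_* = \dim V$ (so the first alternative of the corollary holds) or $(G_*)_\red$ is a reductive algebraic group (so the second alternative holds via Matsushima). For example, the natural modules for $\SL_n$, $\Sp_{2\ell}$, and $\Spin_n$ are prehomogeneous, while $\wedge^2 k^{2m}$ for $\SL_{2m}$ has generic stabilizer $\Sp_{2m}$, which is reductive. The other entries of Table \ref{meta.small} are handled similarly, completing the proof.
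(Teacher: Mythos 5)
Your reduction to the faithful case and the dichotomy ``generic orbit open versus generic stabilizer reductive'' match the paper's strategy, but there is a genuine gap at the pivotal step. You assert that, by Matsushima's theorem, an orbit $G\cdot u$ in $V$ is closed \emph{if and only if} $(G_u)_\red$ is reductive. Matsushima's theorem (in Richardson's characteristic-free form \cite{Richardson:Matsushima}) says only that the homogeneous space $G/(G_u)_\red$ is an \emph{affine variety} if and only if $(G_u)_\red$ is reductive. A closed orbit in the affine variety $V$ is affine, so ``closed $\Rightarrow$ reductive stabilizer'' follows; but the converse, which is the direction you actually need, is false for individual orbits. Example \ref{char3} of this very paper gives a counterexample: for $\SL_2$ acting on binary cubics, the vector $w = x^2y$ has trivial (hence reductive) stabilizer, so its orbit is isomorphic to $\SL_2$ as a variety and in particular affine, yet the orbit is not closed --- its closure contains $0$, as one sees by acting with the diagonal torus.

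What rescues the argument for \emph{generic} orbits is Popov's stability theorem \cite{Popov:stability}: for a semisimple group acting on a factorial affine variety with no nonconstant invertible regular functions (e.g.\ a vector space $V$), the generic orbits are closed if and only if the stabilizer of a generic point is reductive. This is precisely the second citation in the paper's proof, invoked after Richardson--Matsushima converts ``$(G_v)_\red$ reductive'' into ``$G/(G_v)_\red$ affine.'' Your verification that $(G_*)_\red$ is reductive whenever there is no open orbit is essentially the same check the paper performs from Theorem \ref{summ.thm} and Tables \ref{meta.small} and \ref{meta.big}, so once you replace the incorrect ``iff'' with the correct appeal to Popov's theorem, the proof closes.
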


\begin{proof}
As the $G$-orbits in $V$ are not changed by replacing $G$ by a quotient, we may assume that $G$ acts faithfully.  Suppose $G$ does not have an open orbit in $V$, i.e., $\dim k[V]^G > 0$.  Then by the theorem and Tables \ref{meta.small} and \ref{meta.big},  for generic $v \in V$, the group $G_v(k)$ is the $k$-points of a reductive group, i.e., $(G_v)_{\red}$ is reductive.  Thus the quotient $G/(G_v)_{\red}$ is affine \cite{Richardson:Matsushima}, whence the claim by \cite{Popov:stability}. 
\end{proof}

\section{Stabilizers in general position} \label{sgp.sec}

For a vector $v$ in a representation $V$ of an algebraic group $G$, one can consider separately (1) $G_v$, the stabilizer of $v$ in $G$ as a closed sub-group-scheme of $G$; (2) $G(k)_v$, the stabilizer of $v$ in the abstract group $G(k)$ of $k$-points of $G$; or (3) $\g_v$, the annihilator of $v$ in the Lie algebra $\g$ of $G$.  The first carries at least as much information as the other two, in the sense that
\[
G_v(k) = G(k)_v \quad \text{and} \quad \Lie(G_v) = \g_v.
\]
So far, we have focused on the notion of stabilizer in general position in the sense of group schemes, i.e., (1).  One can also consider versions for (2) and (3), namely:

\begin{defn}
For the group of $k$-points, $G(k)$, we say that a stabilizer in general position (s.g.p.) exists if there is a subgroup $G(k)_*$ of $G(k)$ and a dense open subset $U$ of $V$ so that for every $u \in U(k)$, there is a $g \in G(k)$ such that $gG(k)_ug^{-1} = G(k)_*$.

An s.g.p.\ for the Lie algebra $\g$ of $G$ is a subalgebra $\g_*$ of $\g$ such that for every $u \in U(k)$, there is a $g \in G(k)$ such that $(\Ad g) \g_u = \g_*$.
\end{defn}

Recall that, if $\car k = 0$ and $G$ is reductive, then a stabilizer in general position $\sgp$ exists by \cite{Richardson:prin}, see \cite[\S7]{PoV} for a survey.   Indeed, an s.g.p.\ exists even for the action of reductive $G$ on a smooth affine variety $X$ when $\car k = 0$.  However, the same claim does not hold in prime characteristic, even on the level of $k$-points, as the following example demonstrates.

\begin{eg} \label{no.sgp}
The generic stabilizer need not exist when $G$ is semisimple.  For example, take $G = \Spin_7 \times \SL_4$, $\car k = 2$, and $V = (\text{spin}) \ot k^4$.  Combining Prop.~6.2.9 and the proof of Lemma 4.6.1 in \cite{GurLawther} shows that for generic $y \in \P(V)$, the stabilizer $G_y(k)$ is semisimple with $k$-points of type $B_1 \times B_1$, but that there is no s.g.p. for the action of $G(k)$ on $\P(V)$.  (That is, there is a dense open subset of $\P(V)$ on which the stabilizers in $G(k)$ are all isomorphic, but there is not one where they are conjugate under $G(k)$.)  As $G_y(k)$ is semisimple, it follows that the stabilizer $G_v$ of a generic $v \in V$ has the same $k$-points.  Therefore, if the action of $G(k)$ on $V$ has an s.g.p., then so does the action of $G(k)$ on $\P(V)$ (and it would be the same s.g.p.), a contradiction.
\end{eg}

The following lemma allows us to leverage the results of Guralnick-Lawther (where the s.g.p.\ was computed for the abstract group $G(k)$) and Garibaldi-Guralnick (where the generic stabilizer was computed for the Lie algebra $\g$).

\begin{lem} \label{smooth.sgp}
Let $G$ be a group scheme over an algebraically closed field $k$ acting on an irreducible variety $X$ so that 
\begin{enumerate}
\item there is an s.g.p.\ $G(k)_*$ for the action by the abstract group of points $G(k)$ and
\item \label{smooth.sgp.2} there is an $x \in X(k)$ such that $\dim \g_x = \dim G(k)_*$.
\end{enumerate}
Then there is an s.g.p.\ for the action of the group scheme $G$ on $X$ and it is smooth.
\end{lem}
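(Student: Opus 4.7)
The plan is to use the two hypotheses to force the scheme-theoretic stabilizer $G_u$ to be smooth for every $u$ in a dense open subset of $X$, and then to take $G_* := G_{u_0}$ for any such fixed $u_0$. The s.g.p.\ property will then follow by comparing $k$-points.

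For smoothness I would invoke the standard inequality $\dim H \le \dim \Lie(H)$ holding for any algebraic group $H$ over $k$, with equality if and only if $H$ is smooth. The point is that $k$ is perfect, so $H_{\red}$ is a smooth closed sub-group-scheme with the same Krull dimension as $H$, and its tangent space at the identity is a subspace of $\Lie(H)$, yielding $\dim H = \dim H_{\red} = \dim \Lie(H_{\red}) \le \dim \Lie(H)$, with equality iff $H_{\red} = H$. Apply this to $H = G_u$ for $u$ in the intersection of $U$ with the dense open set on which $G(k)_u$ is $G(k)$-conjugate to $G(k)_*$. On one hand $\dim \Lie(G_u) = \dim \g_u = \dim G(k)_*$ by hypothesis (2); on the other hand $\dim G_u = \dim (G_u)_{\red}$ coincides with $\dim G(k)_u$, which equals $\dim G(k)_*$ by hypothesis (1) since inner conjugation in $G(k)$ induces a scheme isomorphism on the reduced stabilizers. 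The two sides agree, so $G_u$ is smooth.

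With smoothness in hand, fix $u_0$ in this open subset and set $G_* := G_{u_0}$, a smooth closed sub-group-scheme of $G$. For any other generic $u$, choose $g \in G(k)$ with $gG(k)_u g^{-1} = G(k)_{u_0}$. Then $gG_u g^{-1}$ and $G_*$ are both smooth, hence reduced, closed sub-group-schemes of $G$ with the same set of $k$-points. Over an algebraically closed field, a reduced closed subscheme of a variety is determined by its $k$-points, so $gG_u g^{-1} = G_*$, verifying the s.g.p.\ property.

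The main obstacle is purely a bookkeeping one: interpreting the quantity ``$\dim G(k)_*$'' appearing in hypothesis (2) so that it genuinely equals $\dim (G_u)_{\red}$ for generic $u$. This is the only place where compatibility between the scheme and abstract-group pictures is essential, and it falls out immediately from the conjugacy statement in the s.g.p.\ definition together with the fact that conjugation by a $k$-point of $G$ is a scheme automorphism. Once this identification is made, the argument is a routine dimension count followed by the uniqueness of smooth closed subgroup schemes via their $k$-points.
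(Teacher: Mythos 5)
Your proof is correct and follows essentially the same route as the paper: shrink to the locus where the $k$-point stabilizers are conjugate, deduce smoothness of $G_u$ from the equality $\dim G_u = \dim G(k)_* = \dim \g_u$, and then identify conjugate stabilizer schemes via the fact that smooth closed sub-group-schemes over an algebraically closed field are determined by their $k$-points. If anything, you spell out the smoothness step (which requires both hypotheses, not just (2)) more carefully than the paper does.
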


\begin{proof}
The set $\{ u \in X \mid \dim \g_u \le \dim G(k)_* \}$ is open by upper semicontinuity of dimension and it is nonempty by \eqref{smooth.sgp.2}.  Put $U$ for its intersection with a nonempty open subset of $X$ consisting of $u$ such that $G(k)_u$ is conjugate to $G(k)_*$.  For any $u \in U(k)$, we have
\[
\dim G(k)_u \le \dim \g_u \le \dim G(k)_*
\]
where the first inequality holds because $G_u$ is an algebraic group and the second is by construction of $U$.  As $G(k)_u$ and $G(k)_*$ are conjugate, they have the same dimension, and we have verified that $G_u$ is smooth.

For $u, u' \in U(k)$, there is a $g \in G(k)$ so that $g G_u g^{-1}$ and $G_{u'}$ have the same $k$-points, by construction of $U$.  As both group schemes are smooth, they agree \cite[Prop.~3.16]{Milne:AG}.
\end{proof}

\begin{eg}[Type $A_1$] \label{A1}
Suppose $G$ has type $A_1$ and $V$ is a faithful irreducible representation with highest weight $\la$, a natural number.  The hypothesis that $G$ is faithful says that $\car k$ does not divide $\la$ (for otherwise $V$ is a Frobenius twist of another representation) and that $G = \SL_2$ if and only if $\la$ is odd.

If $\la = 1$, then $V$ is the tautological representation of $\SL_2$, which has an open orbit ($\dim k[V]^G = 0$), and therefore there is an s.g.p.

If $\la = 2$ (so $\car k \ne 2$), then $V$ is the adjoint representation and the stabilizer of a generic element is a maximal torus.  We find $\dim k[V]^G = 1$.

If $\la \ge 3$, then $\g_v = 0$ for generic $v$ by \cite[Examples 1.8 and 3.3]{GG:large}.  As in \cite[Th.~2]{GurLawther}, $G_v(k)$ is finite, and $\ne 1$ only for the cases in Table \ref{meta.big}.  By Lemma \ref{smooth.sgp}, there is an s.g.p.\ for the action of $G$ on $V$.  In this case, $\dim k[V]^G = \dim V - 3$.

In summary, there is an s.g.p.\ for the action of $G$ on $V$.
\end{eg}

Here is another application of Lemma \ref{smooth.sgp}.

\begin{lem} \label{sp.wedge2}
For $G = \PSp_{2\ell}$ with $\ell \ge 3$ and $V$ the Weyl module $V(\omega_2)$ or the irreducible module $L(\omega_2)$, an s.g.p.\ exists and it is smooth.
\end{lem}

\begin{proof}  
Proposition 5.2.5 of \cite{GurLawther} shows that an s.g.p.\ exists for the action of $G(k)$ on $V(\omega_2)$ and on $L(\omega_2)$, and it has dimension $3\ell$.  Therefore, in view of Lemma \ref{smooth.sgp}, it suffices to verify that $\dim \g_v = 3\ell$ for $v$ generic in $V(\omega_2)$ or $L(\omega_2)$.

View the simply connected cover $\Sp_{2\ell}$ of $G$ as the subgroup of $\GL_{2\ell}$ preserving the alternating bilinear form $s(m,m') := m^\top J m'$ where $J = \stbtmat{0}{I_\ell}{-I_{\ell}}{0}$.  The group $\GSp_{2\ell}$ of similarities of $s$ is generated by $\Sp_{2\ell}$ and scalar transformations.  Its Lie algebra consists of those $x \in \gl_{2\ell}$ of the form $\stbtmat{A}{B}{C}{\mu I_\ell - A^\top}$ for $A, B, C \in \gl_\ell$ and $\mu \in k$ such that $B^\top = B$ and $C^\top = C$ as in \cite[Example 8.1]{GG:special}.  There is a natural exact sequence $1 \to \Gm \to \GSp_{2\ell} \to G \to 1$ and the corresponding map $\gsp_{2\ell} \to \g$ is surjective.  Therefore, it suffices to prove that $\dim (\gsp_{2\ell})_v = 3\ell + 1$ for generic $v \in V$.

We first treat the case $V = V(\omega_2)$.
Let $Y$ be the space of self-adjoint operators with respect to $s$, i.e., those transformations $y$ of $k^{2\ell}$ such that $s(ym,m) = 0$ for all $m \in k^{2\ell}$.  It is a representation of $G$ under the action $\rho(g) y = gyg^{-1}$.  The ring of $G$-invariant polynomial functions $k[Y]^G$ has a linear generator that sends $y \in Y$ to half the trace of $Jy$, see \cite[Example 8.5]{GG:simple} and the kernel of this linear map is $V(\omega_2)$.
Take $V_1$ to be the subspace of $V(\omega_2)$ consisting of diagonal matrices.  The map $G \times V_1 \to V$ is dominant \cite[Cor.~2.10]{GoGu}, so we may take for generic $v \in V$ a diagonal matrix $v = \stbtmat{L}{0}{0}{L}$ where $L \in \gl_\ell$ is diagonal with entries $(\lambda_1, \ldots, \lambda_\ell)$ such that $\sum_{i=1}^\ell \la_i = 0$.
Note that for $A \in \gl_\ell$, $[A, L]$ has $(i, j)$-entry $a_{ij}(\la_j - \la_i)$, which is zero when $i = j$ and is a nonzero multiple of $a_{ij}$ when $i \ne j$ (because $\ell \ge 3$).  In particular, $[A, L] = 0$ if and only if $A$ is diagonal.  For $x \in \gsp_{2\ell}$, we have
\[
\drho(x)v = [x,v] = \stbtmat{[A,L]}{[B,L]}{[C,L]}{[-A^\top, L]},
\]
whence $x$ is in $(\gsp_{2\ell})_v$ if and only if $A$, $B$, and $C$ are diagonal, proving the claim for $V = V(\omega_2)$.

If $\car k$ does not divide $\ell$, then $L(\omega_2) = V(\omega_2)$ and the proof is complete.  So assume $\car k$ divides $\ell$, in which case the subspace $Y_2$ of scalar matrices is a $G$-invariant submodule of $V(\omega_2)$ and $L(\omega_2) = V(\omega_2)/Y_2$.  (For this and the previous sentence, compare for example \cite[esp.~Th.~2(iv)]{PremetSup}.)  Re-reading the previous paragraph, we note that we actually proved that $[A, L]$ is scalar if and only if $A$ is diagonal, and therefore we find that the stabilizer in $\gsp_{2\ell}$ of a generic vector $v \in V(\omega_2)$ is the same as the stabilizer of its image in $L(\omega_2)$, completing the proof.
\end{proof}

When aiming to prove the existence of an s.g.p., the following lemma allows us to focus on representations that are faithful.  A shadow of it already appeared in the second paragraph of the proof of Lemma \ref{sp.wedge2}.

\begin{lem} \label{kernel}
Let $G$ be a group scheme acting on an irreducible variety $X$ such that a normal closed sub-group-scheme $N$ of $G$ acts trivially on $X$.   If there is an s.g.p.\ $(G/N)_*$ for the action of $G/N$ on $X$, then the inverse image of $(G/N)_*$ in $G$ is an s.g.p.\ for $G$ acting on $X$.
\end{lem}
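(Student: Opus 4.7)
The plan is to exploit the fact that every stabilizer $G_v$ automatically contains $N$ (because $N$ acts trivially on $X$), so that $G_v$ is entirely determined by its image in $G/N$. More precisely, write $q \!: G \to G/N$ for the quotient map. For each $v \in X(k)$, the sub-group-scheme $G_v$ of $G$ contains $N$, and an easy functor-of-points check shows $G_v = q^{-1}((G/N)_v)$. Consequently, setting $G_* := q^{-1}((G/N)_*)$ defines a closed sub-group-scheme of $G$, and we only need to verify that for a generic $u \in X$, the stabilizer $G_u$ is $G(k)$-conjugate to $G_*$.

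Let $U \subseteq X$ be a dense open subset supplied by the s.g.p.\ property for $G/N$, so that for each $u \in U(k)$ there exists $\bar g \in (G/N)(k)$ with $\bar g\, (G/N)_u\, \bar g^{-1} = (G/N)_*$. The key step is to lift $\bar g$ to an element $g \in G(k)$ mapping to $\bar g$ under $q$. Over the algebraically closed field $k$, the map $q(k) \!: G(k) \to (G/N)(k)$ is surjective: each fiber of $q$ is an $N$-torsor, and any torsor under an affine group scheme of finite type over an algebraically closed field has a $k$-point. (This is the fact one should cite from \cite{Milne:AG}.)

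Given such a lift $g$, the conjugate $g G_u g^{-1}$ is a closed sub-group-scheme of $G$ that contains $N$ (as $N$ is normal) and whose image in $G/N$ is $\bar g\, (G/N)_u\, \bar g^{-1} = (G/N)_*$. Under the bijective correspondence between closed sub-group-schemes of $G$ containing $N$ and closed sub-group-schemes of $G/N$, this forces $g G_u g^{-1} = q^{-1}((G/N)_*) = G_*$, which is exactly what we needed.

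The only mild subtlety, and hence the ``main obstacle,'' is the surjectivity of $q(k)$; once that is in hand, the Galois-style correspondence between sub-group-schemes containing $N$ and sub-group-schemes of $G/N$ finishes the argument without any further input. Notice that this proof works verbatim for both the case where the s.g.p.\ is that of the group scheme and for the analogous statements at the level of the abstract group $G(k)$ or of the Lie algebra $\g$ (using $\Lie(q)$ and the fact that $\Lie(N) \subseteq \g_v$), which is convenient for transferring the Guralnick--Lawther and Garibaldi--Guralnick calculations to group-scheme statements.
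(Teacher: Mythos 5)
Your proof is correct and follows essentially the same route as the paper's: both rest on the correspondence theorem between closed sub-group-schemes of $G$ containing $N$ and those of $G/N$ (so that $G_x = q^{-1}((G/N)_x)$), lift the conjugating element $\bar g$ to $g \in G(k)$, and conclude by matching images under $q$. The only difference is presentational — you make the surjectivity of $G(k) \to (G/N)(k)$ explicit via the torsor argument, where the paper simply picks a lift without comment.
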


\begin{proof}
The usual correspondence theorem between closed sub-group-schemes of $G$ containing $N$ and closed sub-group-schemes of $G/N$ as in \cite[Th.~5.55]{Milne:AG} shows that, for $x \in X(k)$, $(G/N)_x = G_x / N$.
For $u$ in the open subset $U$ of $X$ on which the s.g.p.\ is defined and $\gbar \in (G/N)(k)$ satisfies $\gbar (G/N)_u \gbar^{-1} = (G/N)_*$, pick $g \in G(k)$ mapping to $\gbar$.  Then $g^{-1} G_* g$ is a closed sub-group-scheme of $G$ containing $N$ with image $(G/N)_u$, so it is $G_u$.
\end{proof}

\subsection*{Reducing to a smaller problem} Suppose $V$ is a representation of an algebraic group $G$ and suppose it has an s.g.p.\ $G_*$.  Set $V_1 := V^{G_*}$, the subspace of elements fixed by $G_*$.  Then, because $G_*$ is an s.g.p., the map 
\begin{equation} \label{psi.def}
\psi \!: G \times V_1 \to V \quad \text{defined by $\psi(g,v) := gv$}
\end{equation}
is dominant, i.e., for generic $v \in V$, the orbit $G(k)v$ meets $V_1$.
In case $G_* = 1$ (i.e., $G$ acts generically freely, which is from some points of view the typical case), then $V_1 = V$ and these statements are trivial.  We remark that more sophisticated results are available, see \cite{LunaRichardson} or \cite[\S2.1]{Popov:sec} for the case $\car k = 0$ and \cite{LoetscherMacD} for results in arbitrary characteristic.

Note that the same argument works if we replace the role of $G_*$ by an s.g.p.\ $G(k)_*$ for the group of $k$-points, or $\g_*$ for the Lie algebra of $G$.  The same argument also shows that \eqref{psi.def} is dominant for $V_1 := V^\lsub$, where $\lsub$ is a subspace of $\g$ with the property that there is an open subset $U \subset V$ such that for every $u \in U$ there is a $g \in G(k)$ such that $(\Ad g)\g_u \supseteq \lsub$.

Roughly speaking, one can ``reverse'' the observation in the previous two paragraphs to find a subspace $V_1$ such that a generic $v \in V_1$ is a proxy for a generic element of $V$.  We formalize this observation in a lemma, where we write $\Trans_G(v, V_1)$ for the closed subscheme of $G$ whose $R$-points are those $g \in G(R)$ such that $gv$ is in $V_1 \otimes R$.

\begin{lem} \label{psi.sgp}
Let $V$ be a representation of an algebraic group $G$.  If a subspace $V_1$ of $V$ satisfies 
\[
\dim \Trans_G(v_1, V_1) \le \dim G + \dim V_1 - \dim V
\]
for generic $v_1 \in V_1$, then for generic $v \in V$ the orbit $Gv$ meets $V_1$.  If additionally there is a subgroup $H$ of $G$ such that $G_{v_1} = H$ for generic $v_1 \in V_1$, then $H$ is an s.g.p.\ for the action of $G$ on $V$.
\end{lem}

\begin{proof}
Suppose that $v'$ is also a generic element of $V_1$.  Then $\Trans_G(v', v_1) \subseteq \Trans_G(v', V_1)$.  For the map $\psi$ defined in \eqref{psi.def}, the fiber over $v_1$ has dimension at most $\dim G + \dim V_1 - \dim V$, i.e., the dimension of $\im \psi$ is at least $\dim V$, whence $\psi$ is dominant.
\end{proof}

\section{\texorpdfstring{$\theta$}{theta}-groups} \label{vinberg.sec}

The pairs $(G, V) = (\SL_8/\mu_4, \wedge^4 k^8)$, $(\HSpin_{16}, \text{half-spin})$, or $(\SL_9/\mu_3, \wedge^3 k^9)$ from Table \ref{meta.big} are examples of ``$\theta$-groups'' or ``Vinberg representations''.  They are constructed as follows.  Take $\Gt$ to be a split adjoint group of type $E_7$, $E_8$, or $E_8$ respectively and set $m = 2$, 2, or 3.  Pick a maximal torus $T$ in $\Gt$ and a set of simple roots $\alpha_1, \ldots, \alpha_\ell$ of $\Gt$ relative to $T$.  We define a $\Z/m$-grading on $\gt$ by setting $\gt_0$ to contain $\tor := \Lie(T)$ and $\gt_i$ to contains those root subalgebras $\gt_\alpha$ such that the height of the root $\alpha$ is congruent to $i$ mod $m$.  
We find that $G$ is a subgroup of $\Gt$ such that $\g$ is identified with $\gt_0$ and the adjoint action of $G$ on $\gt_1$ is equivalent to the representation $V$.  See \cite{Vinberg:weyl}, \cite{ABS}, \cite[\S8.5]{PoV}, \cite{Levy:Vinberg}, and \cite{RLYG} for more on this general family of representations.

If $\car k \ne m$, then it was verified in \cite[\S 7]{GG:irred} that $\g_v = 0$ for generic $v \in V$.  As the s.g.p.\ exists for the action by the group of $k$-points $G(k)$ on $V$ and $G_v$ is smooth (because $\dim \g_v = \dim G(k)_v$), the s.g.p.\ exists for the action of the group scheme $G$.

The rest of this section concerns the case $\car k = m$.

\begin{prop} \label{wedge4}
Suppose $\car k = 2$ and $(G, V)$ is either $(\SL_8/ \mu_4, \wedge^4 k^8)$ or $(\HSpin_{16}, \text{half-spin})$.  Then the s.g.p.\ exists for the action of $G$ on $V$ and is $(\Z/2)^r \times \mu_2^r$ for $r = 3$ or $4$, respectively.
\end{prop}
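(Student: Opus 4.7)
Both cases are Vinberg $\theta$-representations: $(G, V)$ arises from a $\Z/2$-grading $\gt = \gt_0 \oplus \gt_1$ of the simple restricted Lie algebra $\gt$ of type $E_7$ (respectively $E_8$), with $G \subset \Gt$, $\g = \gt_0$, and $V = \gt_1$. Since $\car k = 2$ coincides with the period of the grading, the restricted $[2]$-power map sends $\gt_1$ into $\gt_0 = \g$; this characteristic-$2$ mechanism is what will supply the $\mu_2^r$ factors that are absent in characteristic $0$.

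The cited literature already pins $G_v$ down up to extension. By Table \ref{meta.big} and \cite{GurLawther1}, for generic $v \in V$ we have $G(k)_v \cong (\Z/2)^r$ with the $k$-point stabilizers forming a single $G(k)$-conjugacy class; by \cite{GG:special}, $\dim \g_v = r$. Hence $G_v$ is finite commutative of order $2^{2r}$, with \'etale quotient $(\Z/2)^r$ and infinitesimal identity component $G_v^\circ$ of Lie dimension $r$. Commutativity of $G_v$ over the algebraically closed field $k$ splits the connected--\'etale sequence canonically, so $G_v = G_v^\circ \times (\Z/2)^r$. It therefore suffices to show $G_v^\circ \cong \mu_2^r$, equivalently that $\g_v$ is a toral restricted Lie subalgebra of $\g$. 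For any $v \in \gt_1$ one has $v^{[2]} \in \gt_0 = \g$ and $[v^{[2]}, v] = (\ad v)^2 v = 0$, so $v^{[2]} \in \g_v$; for generic $v$, $v$ and hence $v^{[2]}$ are semisimple in $\gt$. The restricted subalgebra of $\g$ generated by $v^{[2]}$ is therefore toral, and a dimension count against $\dim \g_v = r$ should force it to exhaust $\g_v$, yielding $G_v^\circ = \mu_2^r$.

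For the s.g.p.\ claim as an assertion about sub-group-schemes, I would apply Lemma \ref{psi.sgp}. Let $\mathfrak{c} \subset \gt_1$ be a Cartan subspace in the sense of Vinberg--Levy (\cite{Vinberg:weyl}, extended to positive characteristic in \cite{Levy:Vinberg}), and set $V_1 = \mathfrak{c}$ and $H = Z_G(\mathfrak{c})$. Standard $\theta$-group theory gives density of $G \cdot \mathfrak{c}$ in $V$, verifying the dimension hypothesis of Lemma \ref{psi.sgp}. For generic $t \in \mathfrak{c}$, $G_t$ contains $H$ by construction, and the $k$-point and Lie-algebra bounds already in hand force $G_t = H$ as sub-group-schemes (not merely up to conjugacy). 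Lemma \ref{psi.sgp} then delivers $H$ as an s.g.p.\ for $G$ on $V$, and the structural description $(\Z/2)^r \times \mu_2^r$ transfers from $G_v$ to $H$ by the previous paragraph.

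The principal obstacle will be verifying that $\g_v$ is genuinely toral rather than carrying any nilpotent direction (which would replace $\mu_2$ factors by $\alpha_2$'s); this hinges on semisimplicity of $v^{[2]}$ for generic $v$ and on matching dimensions between the $[2]$-generated toral subalgebra and $\g_v$. A secondary technical point is rigidifying the stabilizer across a dense open $U \subset V$, since the inputs from \cite{GurLawther1} and \cite{GG:special} provide the $k$-points and the Lie algebra only separately; the Cartan-subspace slice is what handles this uniformly.
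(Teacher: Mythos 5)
Your skeleton matches the paper's (which follows Premet's appendix to \cite{GG:spin}): use the $2$-power map on $\gt_1$ to produce the toral part of the stabilizer, and a linear slice plus Lemma \ref{psi.sgp} to upgrade conjugacy of stabilizers to an actual s.g.p. But the two obstacles you flag at the end are real, and your proposed resolutions of them do not work. First, the slice: you take $V_1=\mathfrak{c}$ a Cartan subspace and invoke ``standard $\theta$-group theory'' for density of $G\cdot\mathfrak{c}$. That theory (Vinberg, and Levy in positive characteristic) is developed under the hypothesis that $\car k$ does not divide the order of $\theta$; here $\car k=2=|\theta|$ is exactly the excluded case, which is why the paper isolates it in \S\ref{vinberg.sec}. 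The paper's slice is not a Cartan subspace: it is the $2\ell$-dimensional span of $\gt_{\pm\gamma_i}$ for an explicit $A_1^\ell$ subsystem ($\ell=7$ or $8$), and the transporter bound required by Lemma \ref{psi.sgp} is obtained by showing $\Trans_G(v,V_1)\subseteq N_G(T)$ --- because the elements $v^{[2]^e}$ span $\lsub$ and $C_G(\lsub)^\circ=T$ --- not from any general density statement.

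Second, the group-scheme structure. From $G_v(k)\cong(\Z/2)^r$ and $\dim\g_v=r$ you infer that $G_v$ is commutative of order $2^{2r}$, and later that $H\subseteq G_t$ together with the $k$-point and Lie-algebra bounds forces $G_t=H$. A finite group scheme is not determined by its $k$-points together with its Lie algebra: $\mu_2\subset\mu_4$ have the same (trivial) group of $k$-points and the same one-dimensional Lie algebra, so containment plus those two invariants does not give equality; likewise an infinitesimal diagonalizable group with toral Lie algebra of dimension $r$ could a priori be $\mu_4\times\mu_2^{r-1}$ rather than $\mu_2^r$, so torality of $\g_v$ alone does not finish the identification of $G_v^\circ$. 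Commutativity is also unproved: the connected--\'etale sequence of a finite group scheme over $k$ splits, but only as a semidirect product, and you have not shown that the \'etale part acts trivially on the infinitesimal part. The paper closes all of these gaps simultaneously by locating $G_v$ inside $N_G(T)$, identifying $G_v\cap T$ as $\ker[T\to\Tb]$, the Cartier dual of $T^*/\Tb^*\cong(\Z/2)^r$ and hence exactly $\mu_2^r$, and then explicitly lifting the reflections $s_{\gamma_i}$ to order-$2$ elements of $N_{\Gt}(T)$ and checking which of them lie in $N_{\Gt}(G)$ and in $G$; that last step, involving $N_{\Gt}(G)/G$, is where the distinction $r=3$ versus $r=4$ actually comes from and is not recoverable from the numerical data you start with.
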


The $\HSpin_{16}$ case was proved in Premet's appendix to \cite{GG:spin}.  We adapt his method to encompass both cases and present it here in a side-by-side proof in order to highlight the similarities between the two cases.  The case $G = \SL_8/\mu_4$ has two minor extra complications, namely that the adjoint group $\Gt$ is not simply connected and that there is an outer automorphism of $G$ arising from conjugation by an element of $\Gt$.

\begin{proof}
The $E_7$ root system is contained in $E_8$ in the span of $\alpha_1, \ldots, \alpha_7$.
In the root system of $\Gt$, we find a subsystem of type $A_1^\ell$ with simple roots $\gamma_1, \ldots, \gamma_\ell$.  Specifically, using the notation
\[
\esevenrt{a}{b}{c}{d}{e}{f}{g}
\]
to denote the root $a \alpha_1+b\alpha_2 + c \alpha_3 + d\alpha_4 + e\alpha_5 + f\alpha_6 + g\alpha_7$ of $E_7$ and similarly for $E_8$, as in \cite[Pl.~VI and VII]{Bou:g4}.  We take $\gamma_1, \ldots, \gamma_7$ to be
\[
\esevenrt0010000, \esevenrt0100000, \esevenrt0000100, \esevenrt0112100, \esevenrt0000001, \esevenrt0112221, \esevenrt2234321   
\]
and in the case of $E_8$ we set also $\gamma_8 = \eeightrt23465432$.  (These choices agree with the ones made in the proof of Prop.~5.1.1 in \cite{GurLawther}.)  The $\gamma_i$ generate a sublattice of $T^*$, corresponding to a quotient $\Tb$.  We put $H := \ker [ T \to \Tb ]$, a finite group scheme that is the Cartier dual of $T^* / \Tb^*$, i.e., $H \cong \mu_2^r$ for $r$ as in the statement.

We can describe $H$ explicitly as follows.  As the root system of $\Gt$ is simply laced, we identify it with its inverse root system and roots with coroots.  The coroot $\gamma_i$ defines a cocharacter $\omega_i \!: \Gm \to T$ with differential $\domega_i \!: k \to \tor$ such that $\domega(1) = h_{\gamma_i}$, an element of the Cartan basis of $\gt$ corresponding to the coroot $\gamma_i$.  The composition $\gamma_j \circ \omega_i$ is either trivial ($i \ne j$) or the squaring map $(i = j)$, so $\omega_i$ embeds a copy of $\mu_2$ in $H$.  Varying $i$, we find all of $H$.  On the level of Lie algebras, $\tor$ is identified with $T^* \otimes k$ and $\lsub := \Lie(H)$ is the span of the $h_{\gamma_i}$ with $\dim \lsub = r$.  (Note that when $\Gt$ has type $E_7$, $T^*$ is the weight lattice, and $h_{\alpha_2} + h_{\alpha_5} + h_{\alpha_7} = 0$ in $\tor$.)

To determine the centralizers $C_G(H)$ and $C_G(\lsub)$, we consider instead the bilinear pairing on the roots.  For $M$ the Cartan matrix and $\eta$ the matrix whose $i$th row is the coefficients of $\gamma_i$, the product $\eta M$ viewed as a matrix with entries in $k$ has right kernel spanned by the rows of $\eta$.  That is, if $\alpha$ is in the root lattice $Q$ and $\qform{ \alpha, \gamma_i }$ is divisible by 2 for all $\gamma_i$, then $\alpha$ is in the span of the $\gamma_i$'s and $2Q$.  One finds that the roots $\alpha$ of even height do not lie in this span, i.e., have odd inner product with some $\gamma_i$.  Therefore, the 1-parameter subgroup $G_\alpha$ does not belong to either centralizer, because there is some $\gamma_i$ such that $\alpha$ and $\gamma_i$ have odd inner product.  It follows that $T$ is the identity component of $C_G(H)$ and $C_G(\lsub)$.  

Each $\gamma_i$ has odd height, and we set $V_1$ to be the $2\ell$-dimensional subspace of $\gt_1$ spanned by $\gt_{\gamma_i}$ and $\gt_{-\gamma_i}$ for all $i$.  Let $v \in V_1$ be generic.  (We remark that at this point we have observed that $\lsub = \g_v$.)  For any 2-by-2 matrix $A = \stbtmat{0}{x}{y}{0}$, $A^2$ is the scalar matrix with $xy$ on the diagonal.  It follows that the elements $v^{[2]^e} \in \lsub$ for $e \ge 1$ span $\lsub$, so the stabilizer $G_v$ centralizes $\lsub$ and normalizes $C_G(\lsub)^\circ$, i.e., $G_v$ is contained in $N_G(T)$.

For $v' \in V_1$ also generic, the transporter $\{ g \in G \mid gv = v' \}$ consists of elements normalizing $T$.  It follows that $\Trans_G(v_1, V_1) \subseteq N_G(T)$, so the transporter has dimension at most $\ell$.  For both choices of $(G, V)$ we have $\dim V = \dim G + \dim V_1 - \ell$, so Lemma \ref{psi.sgp} applies.  

We now compute the generic stabilizer $G_v$.  As $\car k = 2$, the simple reflections in the Weyl group are elements of $N_{\Gt}(T)$ of order 2, giving an expression for $N_{\Gt}(T)$ as a semidirect product of $T$ and the Weyl group.
An elementary argument as in \cite[p.~552]{GG:spin} shows that for each $i$ $\Gt_v$ contain an element whose image in the Weyl group $N_{\Gt}(T)/T$ is the reflection in the root $\gamma_i$ and that these elements account for all the cosets of $T$ in $\Gt_v$.  That is, $\Gt_v$ is a semi-direct product $H \rtimes (\Z/2)^\ell$.

The centralizer of $G$ in $\Gt$ is $G$ itself---$G$ is a maximal rank subgroup by construction.  If $\Gt$ has type $E_8$, then there is no element of $\Gt$ that normalizes $G$ and such that conjugation is an outer automorphism of $G$ because in that case we would find inside the representation $\gt$ of $G$ both half-spin representations but there is only one; in this case $N_{\Gt}(G) = G$.  If $\Gt$ has type $E_7$, then $N_{\Gt}(G) = G \rtimes \Z/2$, because $-1$ is an element of the Weyl group of $E_7$ (so is given by conjugation by an element of $N_{\Gt}(T)$) but not the Weyl group of $A_7$.

For each of the elements of $(\Z/2)^\ell$ in the Weyl group of $\Gt$ generated by the reflections in the $\gamma_i$, one checks whether it normalizes the roots of $G$.  In both cases, one finds a subgroup of order 16, i.e., $(\Z/2)^4$.
In case $\Gt$ has type $E_8$, this subgroup belongs to $G$ by the previous paragraph, proving the claim that $G_v$ is $(\Z/2)^4 \times \mu_2^4$.  In case $\Gt$ has type $E_7$, again by the preceding paragraph, one finds that $G_v$ is $(\Z/2)^3 \times \mu_2^3$.  In either case, $G_v$ does not depend on the choice of generic element $v \in V_1$.   Lemma \ref{psi.sgp} gives that $G_v$ is the s.g.p.\ for the action of $G$ on $V$.
\end{proof}

We now treat the remaining case.  The argument is similar to the preceding.  

\begin{prop} \label{wedge3}
Suppose $\car k = 3$ and $(G, V) = (\SL_9/\mu_3, \wedge^3 k^9)$.  Then the s.g.p.\ exists for the action of $G$ on $V$ and is $(\Z/3)^2 \times \mu_3^2$.
\end{prop}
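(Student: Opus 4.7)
The plan is to mirror the proof of Proposition~\ref{wedge4}, replacing the $\Z/2$-grading of $E_7$ or $E_8$ by the $\Z/3$-grading of adjoint $E_8$ producing $(G, V) = (\SL_9/\mu_3, \wedge^3 k^9)$ (as described at the start of this section), and replacing the sub-root system of type $A_1^\ell$ by one of type $A_2^4$ inside the $E_8$ root system.  Heuristically, $A_2$ plays in characteristic~$3$ the role $A_1$ plays in characteristic~$2$: a generic ``cyclic'' element of $\sl_3$ has its restricted $3$-power equal to a nonzero scalar in the Cartan, mirroring the square of an anti-diagonal element of $\sl_2$.

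First I would fix simple roots $\gamma_1, \ldots, \gamma_8$ of an $A_2^4$ sub-root system of $E_8$, organized into four mutually orthogonal pairs, chosen so that the three positive roots of each $A_2$ factor have $E_8$-height $\equiv 1 \pmod 3$ (placing the twelve corresponding root spaces in $\gt_1$).  Comparing discriminants ($\det(A_2^4) = 3^4$, and $E_8$ is unimodular), these $\gamma_i$'s span a sublattice of the $E_8$ root lattice of index~$9$, with cokernel $(\Z/3)^2$; dually, the cocharacters $\omega_i$ attached to the coroots $\gamma_i^\vee = \gamma_i$ generate a subgroup $H \subset T$ with $H \cong \mu_3^2$ and $\lsub := \Lie(H)$ of dimension~$2$.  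The same pairing argument as in the proof of Proposition~\ref{wedge4}, now with $2$ replaced by $3$, shows that any root of $G$ lying outside the sublattice spanned by the $\gamma_i$ has inner product not divisible by $3$ with some $\gamma_i$, so $T$ is the identity component of both $C_G(H)$ and $C_G(\lsub)$.

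Next I would take $V_1 \subset \gt_1$ to be the $12$-dimensional sum of the root spaces $\gt_\alpha$ for $\alpha$ a positive root of the $A_2^4$.  For generic $v \in V_1$, the iterated restricted $3$-powers $v^{[3]^e}$ span $\lsub$ (using the $\sl_3$ calculation above on each of the four factors), so $\g_v = \lsub$ and $G_v$ normalizes $C_G(\lsub)^\circ = T$.  The dimensions $\dim V = 84$, $\dim G = 80$, $\dim V_1 = 12$ fit the hypothesis of Lemma~\ref{psi.sgp}, since $\dim V_1 - (\dim V - \dim G) = 8 = \dim T$ matches the dimension of $N_G(T)$, into which the transporter must land by the argument used for Proposition~\ref{wedge4}.

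The main technical obstacle will be the last step, the explicit determination of $G_v$.  One expects $\Gt_v \cong H \rtimes (\Z/3)^4$, with the second factor generated by lifts to $N_{\Gt}(T)$ of the $3$-cycles in the four Weyl groups $S_3$ of the $A_2$ factors; in characteristic~$3$ these lifts can be arranged to have order~$3$, playing the role that order-$2$ lifts of reflections played when $\car k = 2$.  Intersecting with $G$ (a maximal-rank subgroup of $\Gt$), I would check which of these $3^4$ cosets preserve both the $\SL_9/\mu_3$ root system and the $\Z/3$-grading; the expected answer is $(\Z/3)^2$.  Unlike the $\car k = 2$ case, where $\Aut(\mu_2)$ is trivial so the semidirect product collapses automatically, here a last check is needed to show that the surviving $(\Z/3)^2$ acts trivially on $H \cong \mu_3^2$ as a group scheme, yielding $G_v \cong (\Z/3)^2 \times \mu_3^2$.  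Lemma~\ref{psi.sgp} then gives the s.g.p.\ claim.
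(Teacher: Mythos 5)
Your strategy is the paper's own: the same $A_2^4$ subsystem of the $E_8$ root system, the same $H \cong \mu_3^2$ with $C_G(H)^\circ = C_G(\lsub)^\circ = T$, the same application of Lemma \ref{psi.sgp} to a $12$-dimensional $V_1$ with $80 + 12 - 84 = 8 = \dim N_G(T)$, and the same endgame identifying the relevant part of $\Gt_v$ in the Weyl group with the $(\Z/3)^4$ generated by the rotations $s_{\gamma_i}s_{\gamma_{i+1}}$ and cutting down to $(\Z/3)^2$ inside $G$. The extra points you flag (the discriminant computation giving index $9$, the determinant-$1$ permutation matrix giving order-$3$ lifts, the triviality of the $(\Z/3)^2$-action on $H$ --- which holds because each $s_{\gamma_i}$ acts trivially on $T^*$ modulo the sublattice spanned by the $\gamma_j$) are all consistent with the paper's proof.

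However, your definition of $V_1$ contains a genuine error. You require the three positive roots of each $A_2$ factor to have height $\equiv 1 \pmod 3$; this is impossible, since if $\gamma_i$ and $\gamma_{i+1}$ both have height $\equiv 1$ then $\gamma_i + \gamma_{i+1}$ has height $\equiv 2$. Consequently the span of the twelve positive root spaces of the $A_2^4$ is not contained in $\gt_1 = V$, so your $V_1$ is not even a subspace of $V$. The correct choice, and the one the paper makes, is to take in each factor the three roots of height $\equiv 1 \pmod 3$, namely $\gamma_i$, $\gamma_{i+1}$, and $-(\gamma_i + \gamma_{i+1})$. This is exactly what your own heuristic demands: a generic element of $\gt_{\gamma_i} \oplus \gt_{\gamma_{i+1}} \oplus \gt_{-\gamma_i-\gamma_{i+1}}$ is a cyclic element of the corresponding $\sl_3$, whose restricted cube is a nonzero central element, so the iterated powers $v^{[3]^e}$ span $\lsub$ and force $G_v \subseteq N_G(T)$; a generic element of the three positive root spaces would instead be regular nilpotent with vanishing restricted cube, and the step ``$G_v$ centralizes $\lsub$'' would collapse. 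With this one correction your outline reproduces the paper's proof.
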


\begin{proof}
We follow the outline of the proof of the previous proposition, where $\Gt = E_8$, replacing throughout the prime 2 with 3.  We find a subsystem of type $A_2^4$ with simple roots $\gamma_1, \ldots, \gamma_8$:
\[
\eeightrt10000000, \eeightrt00100000, \eeightrt00001000, \eeightrt00000100, \eeightrt01000000, \eeightrt11232100, \eeightrt00000001, \eeightrt23465431 
\]
Note that $\gamma_i, \gamma_{i+1}$ span a subsystem of type $A_2$ for $i = 1, 3, 5, 7$.  The sublattice of $T^*$ generated by the $\gamma_i$s defines a quotient $\Tb$ of $T$, and we set $H := \ker [ T \to \Tb]$.  We find $H \cong \mu_3^2$, generated by cocharacters $\alpha_1 + 2 \alpha_3$ and $\alpha_5 + 2\alpha_6$.

As in the preceding proof, $C_G(H)$ and $C_G(\lsub)$ have identity component $T$.  We set $V_1$ to be the 12-dimensional subspace of $V$ spanned by $\gt_\beta$ for $\beta$ a root in the $A_2^4$ subsystem of height congruent to $1 \bmod 3$, i.e., $\beta = \gamma_i$, $\gamma_{i+1}$, or $-\gamma_i - \gamma_{i+1}$ for $i = 1, 3, 5, 7$.  Lemma \ref{psi.sgp} applies.

As the 3-by-3 matrix of the cyclic permutation $(1\,2\,3)$ has determinant 1, it follows that the element $s_{\gamma_i} s_{\gamma_{i+1}}$ of order 3 in the Weyl group of $\Gt$ is the image of an element of order three in $N_{\Gt}(T)$.  An elementary argument as in the the preceding proof shows that $N_{\Gt}(T)/T$ is the subgroup of the Weyl group generated by these simple reflections, isomorphic to $(\Z/3)^4$.  The subgroup of this stabilizing the roots of $G$ is $(\Z/3)^2$.  It follows, therefore, that $G_v$ is isomorphic to $(\Z/3)^2 \times \mu_3^2$.
\end{proof}

\section{Infinitesimal stabilizers}  \label{inf.sec}

Table \ref{meta.big} lists 
four representations that have infinitesimal generic stabilizers.  That is, for generic $v \in V$ they have $G_v(k) = 1$ by \cite{GurLawther} and $\g_v \ne 0$ by \cite{GG:irred}.  In this section, we show that these representations have an s.g.p.\ and determine it as a group scheme.

\begin{prop} \label{inf.restrict}
If $(G, \car k, V)$ is (i) $(\Spin_5, 5, L(\omega_1 + \omega_2))$ or (ii) $(\Sp_8, 3, L(\omega_3))$, then $V$ has a s.g.p., which is isomorphic to (i) $\mu_5$ or (ii) $\mu_3 \times \mu_3$ respectively.
\end{prop}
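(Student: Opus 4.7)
The plan is to adapt the strategy used in Propositions \ref{wedge4} and \ref{wedge3} for the $\theta$-group cases: realize the claimed group $H$ as an explicit infinitesimal sub-group-scheme of a maximal torus $T \subseteq G$, set $V_1 := V^H$, and then verify the hypotheses of Lemma \ref{psi.sgp}.

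Concretely, fix a maximal torus $T$ in $G$. For case (i), I would take $H \cong \mu_5$ to be the image of a suitable cocharacter $\mu_5 \hookrightarrow T$ of the rank-two torus of $\Spin_5$; for case (ii), I would take $H \cong \mu_3 \times \mu_3$ to be the kernel of $T \to \Tb$ for a quotient of the cocharacter lattice of $\Sp_8$ by two appropriately chosen coroots. The choices are made so that (a) $\lsub := \Lie(H)$ coincides with the toral subalgebra computed in \cite{GG:special} as the generic Lie-algebra stabilizer $\g_v$ on $V$, and (b) the $T$-weights occurring in $V_1 = V^H$ have $H$ as their common kernel inside $T$. A direct count should give $\dim V_1 = \dim V - \dim G + \rank G$ in both cases, supplying the slack needed to invoke Lemma \ref{psi.sgp}.

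To verify the transporter bound, I would follow the pattern of the $\theta$-group proofs. Using $\g_v = \lsub$ for generic $v \in V_1$, together with the observation that the iterated $[p]$-th powers $v^{[p]^e}$ (formed via the explicit action of root subalgebras on $V_1$) span $\lsub$, any $g \in G(k)$ with $gv \in V_1$ must normalize $\lsub$, hence $T$; here one also uses that $C_G(\lsub)^\circ = T$, which reduces to a root-arithmetic check that no root of $G$ vanishes on $\lsub$ modulo $p$. A Weyl-group bookkeeping then yields $\dim \Trans_G(v_1, V_1) \le \dim G + \dim V_1 - \dim V$. Combined with $G_v(k) = 1$ from \cite{GurLawther1}, the containment $G_v^\circ \subseteq T$ together with condition (b) forces the $T$-stabilizer of a generic $v \in V_1$, which equals $\bigcap_{\lambda \in \mathrm{wt}(V_1)} \ker \lambda = H$, to coincide with $G_v$.

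The main obstacle is a concrete, case-specific combinatorial one: exhibiting the right $H$ (the correct cocharacter in case (i), the correct pair of cocharacters in case (ii)) and simultaneously checking (a), (b), and the transporter bound using the root and weight combinatorics for $B_2$ in characteristic $5$ and for $C_4$ in characteristic $3$. Once these small-rank computations are in place, Lemma \ref{psi.sgp} delivers the conclusion that $H$ is the s.g.p.
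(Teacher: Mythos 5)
Your proposal follows essentially the same route as the paper's proof: the paper fixes a pinning, takes $\lsub$ to be the toral subalgebra spanned by the coroots $\alpha^\vee_1 + 2\alpha^\vee_2$ (resp.\ $\alpha^\vee_1+\alpha^\vee_4$ and $\alpha^\vee_2+\alpha^\vee_4$), sets $V_1 := V^{\lsub}$, checks $\dim V - \dim V_1 = \dim G - \rank G$ and $\Trans_G(v,V_1)\subseteq N_G(\lsub)\subset N_G(T)$ so that Lemma \ref{psi.sgp} applies, uses $G_v(k)=1$ from \cite{GurLawther1} to force $G_v\subseteq T$, and identifies $G_v$ by Cartier duality with the quotient of the weight lattice by the sublattice generated by the weights of $V_1$ ($\Z/5$, resp.\ $\Z/3\times\Z/3$), exactly as you outline. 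The one caveat is that the iterated $[p]$-th power device is specific to the $\theta$-group setting where $V$ sits inside $\gt$ and has no literal meaning here; the paper instead obtains $\Trans_G(v,V_1)\subseteq N_G(\lsub)$ from $\g_v=\lsub$ for generic $v\in V_1$ as in \cite[proof of Lemma 4.1]{GG:irred}, an ingredient your argument already contains.
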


\begin{proof}
Put $p := \car k$.  Fix a pinning for $G$, which includes a maximal torus $T$ and a Chevalley basis for $\g$.  As $G$ is simply connected, the cocharacter lattice $\Hom(\Gm, T)$ is identified with the coroot lattice for the root system of $G$.  Put $H$ for the subtorus of $T$ generated by $\im \beta^\vee$ for $\beta^\vee$ as follows:
\begin{equation} \label{inf.cor}
\begin{array}{cl@{\hskip 1in}cl}
(i)&\alpha^\vee_1 + 2\alpha^\vee_2%
&(ii)&\alpha^\vee_1 + \alpha^\vee_4,\quad  \alpha^\vee_2 + \alpha^\vee_4.
\end{array}
\end{equation}
Because $T$ normalizes $H$, it also normalizes $V^\lsub$, the subspace of $V$ annihilated by $\lsub$, and in particular $V^\lsub$ is a sum of weight spaces.  (Recall that all weights of $V$ have multiplicity 1.)  We find that $V^\lsub$ has weights:
\begin{enumerate}
\renewcommand{\theenumi}{\roman{enumi}}
\item $2\omega_1-\omega_3, -\omega_1+3\omega_2$
\item $2\omega_1-\omega_2-\omega_3+\omega_4$,  $\omega_3$, 
$\omega_1 + \omega_2 - \omega_4$, $-\omega_1 + 2\omega_2 - 2\omega_3 + \omega_4$.
\end{enumerate}
and their negatives.  

Note that $\dim G - \rank G = \dim V - \dim V^\lsub$.  As in the proof of Lemma 4.1 of \cite{GG:irred}, for generic $v \in V^\lsub$, we have $\Trans_G(v, V^\lsub) \subseteq N_G(\lsub) \subset N_G(T)$, so the transporter has dimension at most $\rank G$ and Lemma \ref{psi.sgp} applies with $V_1 := V^\lsub$.  Moreover, there is a dense open subset $U$ of $V^\lsub$ such that $G_u$ is a closed subgroup of $N_G(T)$ for each $u \in U(k)$.  On the other hand, for generic $v \in V$, $G_v(k) = 1$ by \cite[\S2.7]{GurLawther}, which shows that (after possibly shrinking $U$), $G_u$ is a closed subgroup of $T$ for each $u \in U(k)$.

That is, $G_u$ is the intersection of $\ker \omega\vert_T$ as $\omega$ varies over weights displayed above.  The quotient of the weight lattice $T^*$ by the sublattice generated by those weights is (i) $\Z/5$ or (ii) $\Z/3 \times \Z/3$ respectively, proving the claim.
\end{proof}

\begin{prop} \label{SL4.cases}
Let $(G, \car k, V) = (\SL_4, p, L(p^e \omega_1 + \omega_2))$.  If 
\begin{enumerate}
\renewcommand{\theenumi}{\roman{enumi}}
\item $p$ is odd and $e \ge 1$ or
\item $p = 2$ and $e \ge 2$,
\end{enumerate}
then $V$ has an s.g.p., which is isomorphic to (i) $\mu_{p^e}$ or (ii) $\mu_{p^{e+1}}$ respectively.
\end{prop}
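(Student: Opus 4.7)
The plan is to imitate the proof of Proposition \ref{inf.restrict}. First I would apply the Steinberg tensor product theorem to rewrite
\[
V \cong L(\omega_2) \otimes L(\omega_1)^{[p^e]},
\]
so that, as a module for $\g = \sl_4$, $V$ is a direct sum of $\dim L(\omega_1) = 4$ copies of $L(\omega_2) \cong \wedge^2 k^4$, since the Frobenius-twisted factor is $\g$-trivial. The abstract group $G(k)$ still acts faithfully on both factors.

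Next, after fixing a pinning of $G = \SL_4$ with maximal torus $T$ and Chevalley basis, I would identify a specific $1$-dimensional toral subalgebra $\lsub = kH_0 \subseteq \tor$---given by an explicit integer combination of the simple coroots---that realizes the generic Lie-algebra stabilizer $\g_v$ determined in \cite{GG:irred}. The annihilator $V^\lsub$ is then a sum of $T$-weight spaces described explicitly via the tensor decomposition: the weights of $V$ are $\mu + p^e\nu$ where $\mu$ is a weight of $L(\omega_2)$ and $\nu$ a weight of $L(\omega_1)$, and since $p^e\nu(H_0)=0$ in $k$, such a weight lies in $V^\lsub$ precisely when $\mu(H_0)=0$.

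With $V^\lsub$ in hand, I would verify that
\[
\dim V^\lsub \ge \dim V - \dim G + \rank G = 12,
\]
so that Lemma \ref{psi.sgp} applies with $V_1 := V^\lsub$: the transporter $\Trans_G(u, V^\lsub)$ is contained in $N_G(T)$ (hence of dimension at most $\rank G = 3$) for generic $u \in V^\lsub$, so the $G$-orbit of a generic element of $V^\lsub$ meets a generic orbit in $V$. Invoking \cite[Prop.~2.6.17, 2.6.18]{GurLawther1}, $G(k)_v = 1$ for generic $v \in V$, and the infinitesimal group scheme $G_u$ is therefore forced into $T$. Because each weight of $V^\lsub$ has multiplicity one (Corollary \ref{thm.ross}), $G_u$ equals the scheme-theoretic intersection $\bigcap_{\chi} \ker(\chi|_T)$ as $\chi$ ranges over the $T$-weights in $V^\lsub$; equivalently, $G_u$ is the Cartier dual of the quotient of $T^*$ by the sublattice generated by those weights.

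The remaining work---and the main obstacle---is the arithmetic of this sublattice and its quotient. In case (i), with $p$ odd and $e \ge 1$, the quotient should collapse to $\Z/p^e$, giving $G_u \cong \mu_{p^e}$. In case (ii), with $p = 2$ and $e \ge 2$, an extra factor of $2$ survives, yielding $\Z/p^{e+1}$ and hence $\mu_{p^{e+1}}$. Tracking the precise source of this extra factor---which I expect to come from an additional weight appearing in $V^\lsub$ or a sharper divisibility peculiar to characteristic $2$ (where the Pfaffian-type structure on $\wedge^2 k^4$ behaves differently)---is the delicate combinatorial-arithmetic step, and it also clarifies why $p=2$, $e=1$ must be handled separately.
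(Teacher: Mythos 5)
Your overall strategy---find a subspace $V_1$, apply Lemma \ref{psi.sgp}, and then read off a diagonalizable stabilizer from a weight-lattice computation---is the right shape, and for $p$ odd your $V_1 = V^\lsub$ with $\lsub = \g_v = k\cdot\mathrm{diag}(1,1,-1,-1)$ actually coincides with the subspace the paper uses. But there are two genuine gaps. First, the construction collapses entirely in case (ii): for $p=2$ the kernel of $\SL_4 \to \GL(V)$ is the central $\mu_2$, and $\g_v = \Lie(\mu_2) = k\cdot\mathrm{diag}(1,1,1,1)$ is exactly the (central) Lie algebra of that kernel, so it annihilates all of $V$ and $V^\lsub = V$. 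There is no toral subalgebra realizing a nontrivial annihilator here, so the $V^\lsub$ recipe gives nothing; the paper instead defines the subspace intrinsically as $U = (W_1\oplus W_2)^{[q]}\otimes(W_1\otimes W_2)$ for a decomposition $k^4 = W_1\oplus W_2$, which makes sense in every characteristic. Second, even for $p$ odd your transporter bound is wrong in a way that breaks the key deduction. The element $\mathrm{diag}(1,1,-1,-1)$ has repeated eigenvalues, so $N_G(\lsub)$ is not contained in $N_G(T)$: it is the $7$-dimensional group $\hat J$ normalizing $\{W_1, W_2\}$. (Correspondingly $\dim V^\lsub = 16$, not $12$, and the inequality of Lemma \ref{psi.sgp} holds with $7 = 15 + 16 - 24$, so the lemma still applies---but only places $G_u$ inside $J = S(\GL_2\times\GL_2)$, not inside $T$.) The assertion that ``$G_u$ is therefore forced into $T$'' is the crux, and it does not follow from anything you have written: one must show that the generic stabilizer of $J$ on $U$ is exactly the kernel $K$ of $J\to\GL(U)$, which the paper does by proving $J/K$ acts generically freely on $U$ via explicit bounds on $\dim U^x$ for semisimple and nilpotent $x\in\Lie(J/K)$. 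That verification is the bulk of the proof and is missing from your proposal.

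Two smaller points. Once $G_u\subseteq T$ is known, your weight-lattice computation does produce $\mu_q$ for $p$ odd and $\mu_{2q}$ for $p=2$ (the extra factor of $2$ comes from $\gcd(q+2,2)=2$, not from an extra weight in the fixed space), but you have explicitly left that arithmetic open, and in case (ii) you cannot even set it up without first replacing $V^\lsub$ by a subspace that exists. Also, the multiplicity-one statement you attribute to Corollary \ref{thm.ross} is not what that corollary says (it is the converse implication); multiplicity one for the weights of $L(\omega_2)\otimes L(\omega_1)^{[q]}$ is true but needs its own one-line check, and the relevant citation for triviality of $G(k)_v$ in this case is \cite[Prop.~2.7.3]{GurLawther1} rather than Propositions 2.6.17--2.6.18.
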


\begin{proof}
Put $q := p^e > 1$.  Identify the natural representation $k^4$ with $L(\omega_1)$.   Put $W_1$
(resp.~$W_2)$ for the subspace of vectors with the last (resp.~first)
two coordinates zero, so $k^4 = W_1 \oplus W_2$.  Put $\hat{J}$ for
the subgroup of $\SL_4$ normalizing or interchanging $W_1$ and $W_2$, equivalently, the normalizer of $W_1 \otimes W_2$ in $L(\omega_2) = \wedge^2 (W_1 \oplus W_2)$.  Its
identity component consists of block diagonal matrices with diagonal
elements $g_1, g_2 \in \GL_2(k)$ such that $\det g_1 \det g_2 = 1$.
Put 
\[
U := (W_1 \oplus W_2)^{[q]} \otimes (W_1 \otimes W_2) \quad \subseteq L(\omega_1)^{[q]} \otimes L(\omega_2) = V.
\]

We verify the hypotheses of Lemma \ref{psi.sgp}, with $V_1 := U$.  A generic element of $U$ is $u = \sum x_i \otimes y_i$ with the $y_i$ a basis for $W_1 \ot W_2$.
If $gu'=u$ for some $u' \in U$, then $g$ preserves $W_1 \ot W_2$ and so the dimension
of the transporter is at most 
\[
\dim \hat{J} = 7 = \dim G + \dim V_1 - \dim V,
\]
as required.  We know by \cite[Prop.~2.8.3]{GurLawther} that the stabilizer $H$ of a generic $v \in U$ is infinitesimal, and in particular is connected and contained in the identity component $J$ of $\hat{J}$.

Let $T$ denote the copy of $\Gm$ in $J$ such that $t \in \kx$ acts on $W_1$ as multiplication by $t$ and on $W_2$ as multiplication by $t^{-1}$.  The group $J$ is evidently generated by $T$ and the subgroup $\SL_2 \times \SL_2$ of block diagonal matrices in $\SL_4$ mentioned in the first paragraph of the proof.  The two groups overlap in a copy of $\mu_2$ (diagonally embedded in $\SL_2 \times \SL_2$) and we find that $J$ is $(T \times \SL_2 \times \SL_2)/\mu_2$.  

For $R$ a commutative $k$-algebra write $[a, b, c]$ for the image of $(a, b, c) \in T(R) \times \SL_2(R) \times \SL_2(R) \in J(R)$.  We remark that there are two ``obvious'' copies of $\mu_4$ in $J$, namely a $\mu_4 < T$ and the center of $\SL_4$.
Indeed, suppose $\zeta \in \mu_4(R)$ has order 4.  The element $[\zeta, 1, 1] \in J$ belongs to $\mu_4(R) \subset T(R)$, whereas $[\zeta, \zeta^2, 1] = [\zeta^3, 1, \zeta^2]$ belongs to the center of $\SL_4$.

\medskip
\emph{The kernel $K$ of the action.}
Let us determine the group scheme $K := \ker [J \to \GL(U)]$.  If $[a,b,c] \in K(R)$, then $b, c$ belong to $Z(\SL_2)(R) = \mu_2(R)$ and in particular $c^2 = 1$.  If $c \ne 1$, then we have $[a, b, c] = [a, b, c][c, c,c] = [ac, bc, 1]$ in $J$, so every element in $K(R)$ is of the form $[a, b, 1]$ for some $b \in \mu_2(R)$.

On $W_1^{[q]} \otimes W_1 \otimes W_2$, the element $[a, b, 1]$ acts by a scalar $a^q b^{q+1}$.  For the other summand in $U$, the scalar is $a^{-q} b$.  So $[a, b, 1]$ is in $K(R)$ if and only if
\begin{equation} \label{SL4.1}
   a^q b^{q+1} = 1 = a^{-q} b.
\end{equation}
In this way, we find a copy of $\mu_q$ contained in $K$ as elements $[a, 1, 1]$ for $a \in \mu_q(R)$.  This subgroup contains all the elements of $K(R)$ of the form $[a, 1, 1]$.

If $p$ is odd, then \eqref{SL4.1} reduces to $a^q = 1 = b a^{-q}$.  That is, $b = 1$ and we find that $K = \mu_q \subset T$.

If $p = 2$, then \eqref{SL4.1} reduces $a^q b = 1 = a^{-q} b$, so
  $a^{2q} = 1$   and $b = a^q$.
That is, there is an isomorphism $\mu_{2q} \to K$ defined on $R$-points via
$a \mapsto [a, a^q, 1] = [a^{q+1}, 1, a^q]$ and $K \cap T = \mu_q$.

Alternatively, consider a maximal torus $S := T \times \Gm \times \Gm \subset T \times \SL_2 \times \SL_2$ where $\Gm$ stands for the diagonal matrices in $\SL_2$.  The image of $S$ in $J$ contains the kernel $K$.  Writing $(1, 0, 0)$ for a fundamental weight on $T$ and similarly for the other two components of $S$, the weights of $U$ are
\begin{equation} \label{SL4.wts}
(q, \pm q \pm 1, \pm 1) \quad \text{and} \quad (-q, \pm 1, \pm q \pm 1)
\end{equation}
where the signs may be chosen independently.  The kernel of $S$ on $U$ is the intersection of the kernels of these weights; to say the same thing differently, the image $\Sb$ of $S$ in $J/K$ has character lattice $\Sb^*$ in $S^*$ generated by the weights of $U$.  When $p = 2$, the lattice has index $4q$ in $S^*$ and basis $(q, 1, 1)$, $(0, 2, 0)$, $(0, 0, 2)$.

\medskip
\emph{The center of $\Lie(J/K)$.}  Put $\Tb$ for the central torus in $J/K$, the image of $T$.  If $p$ is odd, then the center of $\SL_2 \times \SL_2$ injects into $J/K$, and we find that $Z(J/K) \cong \Tb \times \mu_2$.  If $p = 2$, then the image of the center of $\SL_2 \times \SL_2$ in $J/K$ is a copy of $\mu_2$, which is contained in $\Tb$, so $Z(J/K) = \Tb$.  In either case, $\Lie(Z(J/K)) = \Lie(\Tb)$.

\medskip
\emph{Verification that $H = K$.}  Trivially the generic stabilizer $H = J_v$ contains $K$, and we claim they are equal.  To see this, we verify that $J/K$ acts generically freely on $U$.  Indeed, $H/K = (J/K)_v$, and $(H/K)(k) = 1$ because $H(k) = 1$ \cite[Prop.~5.47]{Milne:AG}.  Consequently, we are reduced to showing that $\Lie(J/K)_v = 0$.

Suppose for the moment that
\begin{equation} \label{SL4.claim}
\dim U^x \le 8 \quad \text{for nilpotent or semisimple $x \in \Lie(J/K) \setminus \Lie(\Tb)$.}
\end{equation}
As $\dim (\Ad J/K)x \le \dim J/K - \rank J/K = 4$,
we find that
\[
\dim (\Ad J/K)x + \dim U^x < \dim U,
\]
whence $\Lie(J/K)_v = \Lie(\Tb)_v$ by \cite[Lemma 1.6(2)]{GG:large}.  As $J/K$ is reductive and $\Lie(J/K)_v$ is contained in the center of $\Lie(J/K)$, it follows easily that $\Lie(J/K)_v = 0$, compare \cite[Lemma 1.7]{GG:large}.

\medskip
\emph{Verification of \eqref{SL4.claim}.}  We now go back and verify claim \eqref{SL4.claim}.  Suppose first that $x$ is semisimple.  There is a maximal torus of $J/K$ whose Lie algebra contains $x$ \cite[Th.~13.3, Rem.~13.4]{Hum:p}; since all maximal tori are conjugate we may assume that $x$ is in $\Lie(\Sb)$.  If $p$ is odd, then $\Lie(J/K) \cong \tor \oplus \so_4$ (because $T/\mu_q \cong T$) where $t \in \tor$ acts on $W_i^{[q]} \ot W_1 \ot W_2$ via $(-1)^{i+1} t$ and $\so_4$ acts on it as a sum of 2 copies of its natural representation.  Write $x = t + x_0$ for $t \in \tor$ and $x_0 \in \so_4$.  As $x$ is not central, $x_0 \ne 0$, so the largest eigenspace  of $x_0$ on the natural representation has dimension at most 2.  It follows that $\dim (W_i^{[q]} \ot W_1 \ot W_2)^x \le 4$, whence \eqref{SL4.claim}.

Suppose now that $x$ is semisimple and $p = 2$.  For each weight of $U$ on $S$ as in \eqref{SL4.wts}, we express it in terms of the basis for the sublattice $\Sb^*$ of $S^*$ and reduce mod 2 to find the weights of $U$ on $\Lie(\Sb)$; these are 
\[
(1, 0, 0), \quad (1, 0, 1), \quad (1, 1, 0), \quad (1, 1, 1),
\]
each with multiplicity 4.  If $\dim U^x > 8$, then at least three of these vanish on $x$.  But any three of these are linearly independent, so $x = 0$, a contradiction, verifying \eqref{SL4.claim}.

Suppose now that $x$ is nilpotent.  Then $x$ is the image of an element in 
\[
\so_4 = \Lie((\SL_2 \times \SL_2)/\mu_2) \subset J/\mu_q,
\] and $x$ acts on $U$ as on 4 copies of the natural representation $k^4$.  Since a nonzero nilpotent in $\so_4$ has kernel of dimension at most 2 on the natural module, \eqref{SL4.claim} follows.
 \end{proof}
 
In the previous proposition when $p = 2$, $\SL_4$ does not act faithfully on $V$.  To address this, we provide: 
 \begin{cor}
If $(G, \car k, V) = (\SL_4/\mu_2, 2, L(2^e \omega_1 + \omega_2))$ for some $e \ge 2$, then $V$ is a faithful representation of $G$ and has an s.g.p.\ that is isomorphic to $\mu_{2^{e}}$.
\end{cor}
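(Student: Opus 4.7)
The plan is to reduce to Proposition \ref{SL4.cases}(ii) by checking that $\mu_2$ is exactly the kernel of the action of $\SL_4$ on $V$, and then descending the s.g.p.\ along the quotient $\SL_4 \twoheadrightarrow G$.  For faithfulness I would compute how the center $Z(\SL_4) = \mu_4$ acts on the irreducible representation $V = L(2^e \omega_1 + \omega_2)$.  Acting on the highest-weight line, a scalar $\zeta \in \mu_4$ operates as multiplication by $\zeta^{2^e + 2}$.  For $e \ge 2$, the exponent $2^e + 2$ is congruent to $2 \bmod 4$, so this character factors as the squaring map $\mu_4 \to \mu_4$, whose scheme-theoretic kernel is exactly the unique subgroup $\mu_2 \subset \mu_4$.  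Since $\SL_4$ modulo its center is simple, the full kernel of $\SL_4 \to \GL(V)$ equals this $\mu_2$, and so $G = \SL_4/\mu_2$ acts faithfully on $V$.

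Next I would descend the s.g.p.\ along the quotient.  Proposition \ref{SL4.cases}(ii) provides an s.g.p.\ $(\SL_4)_* \cong \mu_{2^{e+1}}$ for $\SL_4$ acting on $V$, and this stabilizer necessarily contains the action kernel $\mu_2$.  The correspondence between closed sub-group-schemes of $\SL_4$ containing $\mu_2$ and those of $G$ (as used in the proof of Lemma \ref{kernel}, giving $G_v = (\SL_4)_v/\mu_2$) then shows that on the same dense open subset of $V$ where the $\SL_4$-stabilizers are all $\SL_4(k)$-conjugate to $\mu_{2^{e+1}}$, the $G$-stabilizers are all $G(k)$-conjugate to $\mu_{2^{e+1}}/\mu_2$; this yields an s.g.p.\ for the action of $G$.

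Finally I would identify $\mu_{2^{e+1}}/\mu_2 \cong \mu_{2^e}$ using the short exact sequence $1 \to \mu_2 \to \mu_{2^{e+1}} \xrightarrow{(-)^2} \mu_{2^e} \to 1$ arising from the squaring map on $\Gm$.  The only real point of care is that the $\mu_2$ sitting inside the s.g.p.\ $\mu_{2^{e+1}}$ coincides with the $\mu_2$ by which we quotient $\SL_4$; since both equal the kernel of $\SL_4 \to \GL(V)$ identified at the outset, this compatibility is automatic, and I do not anticipate any further obstacle.
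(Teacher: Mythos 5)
Your proposal is correct and follows essentially the same route as the paper: invoke Proposition \ref{SL4.cases}(ii) to get the s.g.p.\ $\mu_{2^{e+1}}$ for $\SL_4$, pass to the quotient by the action kernel $\mu_2$ via the correspondence $G_v = (\SL_4)_v/\mu_2$ underlying Lemma \ref{kernel}, and identify $\mu_{2^{e+1}}/\mu_2 \cong \mu_{2^e}$. Your explicit central-character computation ($\zeta \mapsto \zeta^{2^e+2}$ with $2^e + 2 \equiv 2 \bmod 4$ for $e \ge 2$) justifying that the kernel is exactly $\mu_2$ is a detail the paper simply asserts, so this is a welcome addition rather than a deviation.
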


\begin{proof}
$\SL_4$ acts on $V$ with kernel $\mu_2$ and generic stabilizer isomorphic to $\mu_{2^{e+1}}$ (Prop.~\ref{SL4.cases}), so Lemma \ref{kernel} gives that $\SL_4/\mu_2$ has an s.g.p.\ and it is isomorphic to $\mu_{2^{e+1}} / \mu_2 \cong \mu_{2^e}$.
\end{proof}

\section{Existence of an s.g.p.: Proof of Theorem \ref{sgp}}  \label{sgp.pf}

We will now prove Theorem \ref{sgp}, where $V$ is an irreducible representation of a simple algebraic group $G$.
Suppose first that $G$ acts faithfully on $V$.   The case $V = L(\hst)$ was treated in Lemma \ref{adj.eg}.
 If $G$ acts generically freely on $V$, then there is nothing to prove, so assume $G_v \ne 1$ for generic $v$.
 
\smallskip
Consider the case $\dim V > \dim G$.  By Theorem \ref{summ.thm}\eqref{summ.big}, we may assume that $(G, \car k, V)$ is listed in Table \ref{meta.big}.

In the cases where $G_v$ is finite \'etale for generic $v$, \cite{GurLawther} shows that the s.g.p.\ exists for the action by $G(k)$.  Since $\g_v = 0$ for generic $v$, it follows that there is an s.g.p.\ for the action of $G$ (Lemma \ref{smooth.sgp}).

For ($\HSpin_{16}$, any, half-spin) see \cite[Th.~1.2]{GG:spin} or Prop.~\ref{wedge4}.  For $(\SL_8/\mu_4, 2, \wedge^4 k^8)$ and $(\SL_9/\mu_3, 3, \wedge^3 k^9)$, see Propositions \ref{wedge4} and \ref{wedge3} respectively.  In all three cases, $G_* = (\Z/p)^r \times \mu_p^r$ for $p := \car k$ and some $r > 1$, so the identity component of $G_*$ is $\mu_p^r$.  The proofs in \S\ref{vinberg.sec} show that in each case $\mu_p^r$ is contained in a torus of $G$.

The four cases in Table \ref{meta.big} where $G_v(k) = 1$ were treated in \S\ref{inf.sec}.  In each case, $G_*$ is connected and contained in a maximal torus of $G$.

We note that in all of these cases with $\dim V > \dim G$, the the s.g.p.\ $G_*$ is a finite group scheme.  For the third claim, since $G_*$ is finite, its identity component $G_*^\circ$ is non-trivial exactly when $G_*$ is not smooth; those cases are covered in the two preceding paragraphs.  Since $G_*^\circ$ is contained in a maximal torus $T$ of $G$, we have
$\Lie(G_*) = \Lie(G_*^\circ) \subseteq \Lie(T)$.
 
\smallskip
Consider now the cases in Table \ref{meta.small}.
If $\dim V/G \le 1$, then there is an open $G$-orbit in $\P(V)$ \cite[\S6]{BGL}, hence an s.g.p.\ exists.
For ($\PSp_{2\ell}$, any, $L(\omega_2)$), the existence of an s.g.p.\ was established in Lemma \ref{sp.wedge2}.
The s.g.p.\ for the representation ($\Spin_{13}$, any, spin) was calculated in \cite[\S8,9]{GG:spin} and \cite[Prop.~5.2.9]{GurLawther}, see Remark \ref{Spin13}.  

There is one final case from Table \ref{meta.small}, which we treat in the following lemma.

\begin{lem}[natural representation of $F_4$] \label{F4}
Let $G$ be a group of type $F_4$, and let $V$ be its ``natural'' Weyl module $V(\omega_4)$ of dimension 26 or the irreducible quotient $L(\omega_4)$ of the Weyl module.  Then the s.g.p.\ exists for $G$ acting on $V$, and it is isomorphic to $\Spin_8$.
\end{lem}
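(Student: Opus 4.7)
The plan is to realize the 26-dimensional Weyl module $V(\omega_4)$ as the subspace $J_0$ of trace-zero elements in the Albert (exceptional Jordan) algebra $J$ over $k$, with $G = F_4$ identified as the automorphism group scheme $\Aut(J)$; this realization is available in every characteristic. Fix a Jordan frame of primitive orthogonal idempotents $e_1, e_2, e_3 \in J$ (so $e_i^2 = e_i$, $e_i \cdot e_j = 0$ for $i \neq j$, and $e_1 + e_2 + e_3 = 1$), and choose a test vector $v = \lambda_1 e_1 + \lambda_2 e_2 + \lambda_3 e_3 \in J_0$ with $\sum \lambda_i = 0$ and the $\lambda_i$ pairwise distinct.

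I would then show that the scheme-theoretic stabilizer $G_v$ is exactly the stabilizer $H$ of the ordered frame $(e_1,e_2,e_3)$, which is classically $\Spin_8$ inside $F_4$. Because the $\lambda_i$ are distinct, the commutative subalgebra of $J$ generated by $v$ coincides with $k e_1 \oplus k e_2 \oplus k e_3$, so any $R$-point of $G_v$ preserves this decomposition and, since the $\lambda_i$ remain distinct over any $k$-algebra $R$, must fix each $e_i$. The identification $H \cong \Spin_8$ as a group scheme, with the three 8-dimensional blocks of $J$ relative to the frame acting via the three triality-related representations, is characteristic-free and can be taken from Knus--Merkurjev--Rost--Tignol (\S39); the crucial technical point is that $H$ is smooth of dimension 28 in every characteristic.

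To upgrade this to an s.g.p.\ for the group scheme action, I would invoke Lemma \ref{smooth.sgp}. For the dimension hypothesis, the containment $\Spin_8 \subseteq G_v$ gives $\dim \g_v \geq 28$; conversely, the $F_4$-invariant quadratic trace form and cubic norm on $J$ restrict to two algebraically independent invariants on $J_0$, forcing $\dim k[V]^G \geq 2$ and hence $\dim G_v \leq 28$. Thus $\dim \g_v = 28$ for generic $v$, and since \cite{GurLawther1} gives $G(k)_v = \Spin_8(k)$ generically, Lemma \ref{smooth.sgp} produces a smooth s.g.p.\ that has the same $k$-points and Lie algebra as $\Spin_8$, and therefore equals $\Spin_8$ as a subgroup scheme.

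Finally, for $V = L(\omega_4)$, the two modules coincide when $\car k \neq 3$. In characteristic 3, $1 = e_1 + e_2 + e_3$ lies in $J_0$ and $L(\omega_4) = J_0 / k \cdot 1$. The stabilizer of $[v] := v + k \cdot 1$ a priori contains $G_v$; to see equality, I would use $F_4$-invariance of the cubic norm $N$ on $J$ and the expansion of $N(v + t \cdot 1)$ in $t$, which for a $v$ with distinct eigenvalues depends nontrivially on $t$, so the condition $\phi(v) \in v + k \cdot 1$ forces $\phi(v) = v$. The conclusion for $L(\omega_4)$ then reduces to the one already established for $V(\omega_4)$.

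The main obstacle I expect is carrying out the group-scheme-level identification of the frame stabilizer with $\Spin_8$ uniformly in characteristics 2 and 3, where Jordan-algebraic arguments sometimes degenerate; in particular, verifying smoothness of that stabilizer is what makes Lemma \ref{smooth.sgp} applicable and lets the $k$-point identification of \cite{GurLawther1} be promoted to a scheme-theoretic one.
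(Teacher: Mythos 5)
Your core construction --- realizing $V(\omega_4)$ as the trace-zero part of the Albert algebra $J$, taking $v = \sum\lambda_i e_i$ diagonal with distinct eigenvalues, and using the Vandermonde argument to identify $G_v$ scheme-theoretically with the elementwise fixer of the frame, hence with $\Spin_8$ via \cite{KMRT} --- is exactly the paper's route, including the caveat about characteristics $2$ and $3$. The genuine divergence is in the genericity step: the paper shows that every generic $v$ generates a cubic \'etale subalgebra of $J$ and that all such subalgebras are $G$-conjugate by Jacobson's Embedding Theorem \cite[\S4.12]{Ptr:embed}, so conjugacy of generic stabilizers follows without Lemma \ref{smooth.sgp} or the $k$-point computation of \cite{GurLawther1}. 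Your alternative is legitimate in principle, but two of your steps fail as written.

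First, the dimension count is backwards and aimed at the wrong quantity. From $\dim k[V]^G = \dim V - \dim G + \dim G_v$ with $\dim V = 26$ and $\dim G = 52$, two algebraically independent invariants give $\dim k[V]^G \ge 2$ and hence $\dim G_v \ge 28$ --- the \emph{same} direction as the containment $\Spin_8 \subseteq G_v$, not the upper bound you need. Moreover the $\dim G_v$ in that formula is the dimension of the reduced stabilizer, i.e.\ of $G(k)_v$, so no estimate on $\dim k[V]^G$ can control $\dim \g_v$, which is what Lemma \ref{smooth.sgp} actually requires. The repair is cheap: your scheme-theoretic computation already gives $\g_v = \Lie(\Spin_8)$ of dimension $28$ at the explicit diagonal $v$, and upper semicontinuity of $u \mapsto \dim\g_u$ then yields $\dim\g_u \le 28$ on a dense open set, which together with $\dim\g_u \ge \dim G(k)_u = 28$ is the hypothesis of Lemma \ref{smooth.sgp}. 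Delete the invariant-theoretic detour.

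Second, the characteristic-$3$ quotient argument is not merely incomplete; the statement it aims to prove is false. With $\sum\lambda_i = 0$ one has $N(v + t\cdot 1) = N(v) + e_2(\lambda)\,t + t^3$, since the $t^2$-coefficient is $e_1(\lambda) = 0$; thus $N(v+t\cdot 1) = N(v)$ admits the nonzero roots $t = \pm\sqrt{-e_2(\lambda)}$ and the norm does not force $t = 0$. Indeed, in characteristic $3$ every trace-zero triple satisfies $\lambda_2 - \lambda_1 = \lambda_3 - \lambda_2$, so the automorphism of $J$ cyclically permuting the frame sends $v$ to $v + t\cdot 1$ with $t = \lambda_1 - \lambda_2 \ne 0$ and hence fixes the image of $v$ in $J_0/k\cdot 1$; the stabilizer of a generic vector of $L(\omega_4)$ therefore contains $\Spin_8$ extended by an order-$3$ triality element, and your claimed equality $G_{[v]} = G_v$ cannot hold. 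The paper's own treatment of the quotient only asserts that an s.g.p.\ exists (every lift of $[v]$ generates the same cubic \'etale subalgebra, so $G_{[v]}$ is controlled by the conjugacy class of that subalgebra); pinning down that s.g.p.\ exactly in characteristic $3$ requires an argument your proposal does not supply.
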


\begin{proof}
Suppose first that $V$ is the Weyl module.  The group $G$ can be viewed as the (algebraic) group of automorphisms of an Albert algebra $J$, where $V$ is the codimension-1 subspace of elements of trace zero.  See \cite{Ptr:surv} and \cite{Jac:J} for background on Albert algebras.

As a vector space, $J$ can be written as the set of hermitian 3-by-3 matrices with entries in an octonion algebra.  The set $E$ of diagonal matrices is a cubic \'etale algebra, and the sub-group-scheme of $G$ fixing $E$ elementwise is isomorphic to $\Spin_8$, by the arguments in \cite[\S38]{KMRT} and \cite[Th.~6]{Jac:ex}.  (Although stated under the hypothesis that $\car k \ne 2, 3$, the arguments go through without this hypothesis with only cosmetic changes.  Alternatively, in case $\car = 2$ or 3, one can check this on the level of $k$-points and use a computer to verify that the generic stabilizer in $\g$ has dimension at most 28.)

A generic element $v \in V$ generates a cubic \'etale subalgebra $E_v$ of $J$.  (This is essentially when $\car k \ne 2$, because in that case every element generates a commutative associative subalgebra, and properties of the generic minimal polynomial show that a generic element generates a separable subalgebra of degree 3.  When $\car k = 2$, the same reasoning works by invoking \cite[Prop.~1]{McC:FSTrev}.)
Therefore the stabilizer $G_v$ of $v$ is the subgroup of $G$ fixing $E_v$ elementwise.  As $k$ is algebraically closed, $E_v$ is isomorphic to $E$ as a $k$-algebra, and so is conjugate under $G$ to $E$ by Jacobson's Embedding Theorem (which, in this generality, is \cite[\S4.12]{Ptr:embed}).  This proves the claim for the Weyl module.

The Weyl module only fails to be irreducible when $\car k = 3$ \cite{luebeck}.  In that case, the irreducible representation is the quotient of $V$ by the span of the identity element of $J$.  For $v$ generic in the irreducible quotient, any inverse image of it in the Weyl module again generates a cubic \'etale subalgebra of $J$, and the Weyl module case again shows that the s.g.p.\ exists for the action of $G$.
\end{proof}

 We have now accounted for all of the representations in Table \ref{meta.small}, proving Theorem \ref{sgp} under the assumption that $V$ is faithful.
So drop the assumption that $V$ is faithful and put $N := \ker [ G \to \GL(V) ]$.  Then $G/N$ is simple (Lemma \ref{simple.quo}) and acts faithfully on $V$, so it has an s.g.p.\ $(G/N)_*$.  The inverse image of $(G/N)_*$ in $G$ is an s.g.p.\ for the action of $G$ (Lemma \ref{kernel}).  If $\dim V > \dim G$, then since $V$ is irreducible, $G$ acts nontrivially on $V$ and $N$ is a finite group scheme.
We conclude that $\dim G_* = \dim N + \dim (G/N)_* = 0$, i.e., $G_*$ is also finite.  If additionally $N$ is central, then $N$ is contained in every maximal torus $T$ of $G$ and there is a bijection between maximal tori of $G$ and $G/N$ given by $T \leftrightarrow T/N$ \cite[Th.~2.20(ii)]{BoTi:c}.  Therefore, the inverse image in $G$ of any maximal torus of $G/N$ containing $(G/N)_*$ is a maximal torus of $G$ containing $G_*$.
This completes the proof of Theorem \ref{sgp}.

\section{The s.g.p.\ is commutative for large \texorpdfstring{$V$}{V}}  \label{commutative.sec}

Building on what has gone before, we easily obtain the following result concerning when the s.g.p.\ is commutative.

\begin{prop} \label{commutative}
Let $V$ be a faithful and irreducible representation of a simple algebraic group $G$.  If $\dim V > \dim G + 1$, then either
\begin{enumerate}
\item \label{comm.1} for generic $v \in V$ the stabilizer $G_v$ is a commutative group scheme or
\item \label{comm.2} $(G, \car k, V) = (\SL_9/\mu_3, 2, \wedge^3 k)$, up to graph automorphism.
\end{enumerate}
\end{prop}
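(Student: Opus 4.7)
The plan is to reduce the statement to a direct inspection of Table \ref{meta.big} and then to invoke the structural computations of $G_v$ already carried out in this paper and in \cite{GurLawther1}.  Since the hypothesis $\dim V > \dim G + 1$ gives in particular $\dim V > \dim G$, part \eqref{summ.big} of Theorem \ref{summ.thm} applies, so $G_v$ is finite for generic $v$, and $G_v$ is the trivial group scheme---a fortiori commutative---whenever $(G, \car k, V)$ does not appear in Table \ref{meta.big} (up to graph automorphism).  The problem is therefore reduced to checking, row by row, that $G_v$ is commutative in every entry of Table \ref{meta.big} apart from the one distinguished row.

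For the rows in which $\g_v = 0$, the group scheme $G_v$ is \'etale and so is determined by its group of $k$-points, which is recorded in Table \ref{meta.big} on the basis of \cite{GurLawther1}.  My first step would be a direct scan of that column: the great majority of the listed stabilizers are cyclic, elementary abelian, or obvious direct products thereof, and one checks by inspection that the only row in which $G(k)_v$ fails to be abelian is $(\SL_9/\mu_3, 2, \wedge^3 k^9)$, which accounts for case \eqref{comm.2}.  For the remaining rows, with $\g_v \ne 0$, the group scheme $G_v$ has already been pinned down earlier in the paper: the three $\theta$-group rows in characteristic $m$ give $G_v \cong (\Z/m)^r \times \mu_m^r$ by Propositions \ref{wedge4} and \ref{wedge3}, while the four remaining infinitesimal rows give $G_v$ a finite subgroup scheme of a maximal torus---namely $\mu_5$, $\mu_3 \times \mu_3$, or $\mu_{p^e}$---by Propositions \ref{inf.restrict} and \ref{SL4.cases}.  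Each of these is manifestly commutative.

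The only genuine step is thus the case-by-case inspection of the \'etale rows of Table \ref{meta.big}, and this is made routine by the fact that the stabilizers have already been identified explicitly: verifying commutativity reduces to reading off the entries.  I do not foresee any conceptual obstruction beyond methodically exhausting the rows of the table; the one place that might warrant extra care is making sure the slightly strengthened hypothesis $\dim V > \dim G + 1$ (as opposed to $\dim V > \dim G$) is used correctly, but this plays no role in the reduction and simply eliminates any borderline row that would otherwise need separate attention.
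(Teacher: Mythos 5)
Your overall strategy --- reduce via Theorem \ref{summ.thm}\eqref{summ.big} to the rows of Table \ref{meta.big} and then check commutativity row by row, using Propositions \ref{wedge4}, \ref{wedge3}, \ref{inf.restrict}, and \ref{SL4.cases} for the non-smooth cases --- is exactly the paper's. But the decisive step, the inspection of the \'etale rows, is carried out incorrectly, and the error is precisely where you declare the hypothesis $\dim V > \dim G + 1$ to be idle. The paper's proof records that \emph{four} rows of Table \ref{meta.big} have non-commutative $G_v$, of which $(\SL_9/\mu_3, 2, \wedge^3 k^9)$ is only one; the other three are eliminated solely because they satisfy $\dim V = \dim G + 1$. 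Concretely, the representations $L(\omega_1 + p^e\omega_1)$ and $L(\omega_1 + p^e\omega_\ell)$ of (quotients of) $\SL_{\ell+1}$ sit in Table \ref{meta.big} with $\dim V = (\ell+1)^2 = \dim G + 1$, and their generic stabilizer is the fixed-point subgroup of a Steinberg endomorphism of $\SL_{\ell+1}$ (compare the last row of Table \ref{same.table}, where the generic stabilizer of $\SL(W)\ot\SL(W)$ on $W \ot W$ is identified as a diagonal $\SL(W)$, and Theorem \ref{t:regorbit}\eqref{reg.usual}, which records that these modules have no regular orbit). These finite groups of Lie type are emphatically non-abelian. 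The case $(\SL_4, 3, L(\omega_1+\omega_2))$ of Proposition \ref{t:regorbitpoints}, again with $\dim V = 16 = \dim G + 1$, is of the same kind.

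So your claim that ``the only row in which $G(k)_v$ fails to be abelian is $(\SL_9/\mu_3, 2, \wedge^3 k^9)$'' is false, and the closing remark that the strengthened hypothesis ``plays no role in the reduction'' has the logic backwards: if the hypothesis were weakened to $\dim V > \dim G$, the proposition would be false, with the twisted tensor product modules above as counterexamples. The fix is small --- your framework is fine --- but the proof must explicitly identify all four non-commutative rows and then observe that three of them have $\dim V = \dim G + 1$ and so are excluded by the hypothesis, which is exactly what the paper does.
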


\begin{proof}
Assume $G_v \ne 1$, for otherwise there is nothing to prove.
By Theorem \ref{summ.thm}\eqref{summ.big}, up to graph automorphism 
$(G, \car k, V)$ belongs to Table \ref{meta.big}.  In that table, only four rows have $G_v$ non-commutative.  (This claim relies on the results of \S\ref{vinberg.sec} and \S\ref{inf.sec}.)  Of those four, only the row $(\SL_9/\mu_3, 2, \wedge^3 k^9)$ has dimension at least $\dim G + 1$.
\end{proof}

\section{Smoothness: Proof of Theorem \ref{stab.smooth}}  \label{smooth.sec}

We now address the question of whether the group scheme $G_v$ stabilizing a generic $v \in V$, is smooth.  Specifically, we prove Theorem \ref{stab.smooth}.

We may assume that the stabilizer $G_v$ of a generic $v \in V$ is not the trivial group scheme.
If $V = L(\hst)$, then the generic stabilizer has identity component a maximal torus (Lemma \ref{adj.eg}).
Therefore, by Theorem \ref{summ.thm}, we may assume that, up to graph automorphism, $(G, \car k, V)$ belongs to Table \ref{meta.small} or \ref{meta.big}.

We complete the proof of Theorem \ref{stab.smooth} by noting that the representations in Table \ref{meta.small} have $G_v$ smooth except for $G_2$ in characteristic 2.  For $\SO_n$ with $n \ge 5$, when $n$ is odd or $\car k \ne 2$, the stabilizer of an anisotropic vector in the tautological representation $k^n$ is $\SO_{n-1}$.
The stabilizers in spin or half-spin representations of groups of type $B$ and $D$ are determined in \cite{GG:spin}.

The natural representation of $F_4$ is treated in Lemma \ref{F4}.  (For $\car k \ne 3$, the smoothness was established by different means in \cite[Th.~3.2]{Stewart:min}.)  One can check easily the cases of the natural representations of $\SL_n$ and $\Sp_{2\ell}$.  For $\SL_n$ acting on $S^2 k^n$ with $\car k \ne 2$  or $\wedge^2 k^n$ with $n$ even, the stabilizer of a generic element is the definition of the special orthogonal or symplectic group.  If $n$ is odd for $\wedge^2 k^n$, it is the stabilizer of
a degenerate form but it is straightforward to compute that it is smooth.

\medskip

Here are two techniques that leverage our knowledge of the stabilizer $G(k)_v$ on the level of $k$-points.  First, $G_v$ is smooth if and only if $\dim \g_v = \dim G(k)_v$.  On the other hand, we may view $\g$ as a representation of the abstract group $G(k)_v$, i.e., the $k$-points of the smooth subgroup $(G_v)_{\red}$ of $G_v$. The group $G(k)_v$ normalizes the Lie algebra $\g_v$, which contains $\lsub := \Lie((G_v)_{\red})$ as a $G(k)_v$-invariant subalgebra.  Moreover, $\dim \lsub = \dim G(k)_v$.   By hypothesis, $\g$ acts faithfully on $V$, so $\g_v \ne \g$ (otherwise $\g$ would act trivially) and $\z(\g) \cap \g_v = 0$ (if a nonzero central element of $\g$ annihilates a nonzero vector in $V$, it annihilates $V$).  This provides many constraints on $\g_v$.

Second, for any particular characteristic $p = \car k$, we may construct $\g$ and $V$ over $\F_p$.  For every field $k' \supseteq \F_p$ and every $v' \in V \ot k'$, we have $\dim \g_v \le \dim \g_{v'}$.  In particular, it suffices to find such a $v'$, say with $k' = \F_{p^3}$, such that $\dim \g_{v'} = \dim G(k)_v$; such a $v'$ can be sought using a computer as described in \cite[\S3]{GG:irred}.

\medskip

The next result concerns the ``natural'' representation of a group $G$ of type $G_2$, which has highest weight $\omega_2$.  The Weyl module $V(\omega_2)$ is 7-dimensional and has a nonzero $G$-invariant quadratic form $q$, see for example \cite[\S4.4]{GN}.  One can argue as in \cite[\S23]{FH} or \cite[1.6.4, 1.7.3, 2.2.4]{Sp:ex} that the orbit of the highest weight vector is the set of nonzero $v$ such that $q(v) = 0$.  The stabilizer of $v$ in $G$ has codimension 1 in a parabolic subgroup, and we denote it by $A_1 U_5$, where $U_5$ stands for the unipotent radical of the parabolic subgroup. The representation $V(\omega_2)$ is irreducible if and only if $\car k \ne 2$.

\begin{lem}[natural representation of $G_2$] \label{G2}
For $G$ of type $G_2$ and $V = L(\omega_2)$, the stabilizer $G_v$ is smooth (resp.~reductive) for generic $v \in V$ if and only if $\car k \ne 2$.  If $\car k = 2$, then $G(k)$ acts transitively on the nonzero vectors in $V$, $G_v$ is not smooth, and $(G_v)_\red = A_1 U_5$.
\end{lem}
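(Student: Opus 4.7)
The argument naturally splits by characteristic.

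For $\car k \neq 2$, I would identify $V$ with its concrete realization as the seven-dimensional module for $G_2$ equipped with an invariant non-degenerate quadratic form $Q$ (present in every characteristic $\ne 2$). A generic $v \in V$ has $Q(v,v) \ne 0$, and its $G$-orbit is the quadric hypersurface $\{Q(\cdot,\cdot) = Q(v,v)\}$ of dimension $6$. Using the octonion description $G_2 = \Aut(\mathbb{O})$ with $V$ the space of imaginary octonions, the stabilizer $G_v$ is identified with the copy of $\SL_3$ stabilizing the quaternion subalgebra generated by $v$; the dimension count $\dim G_2 - \dim \SL_3 = 14 - 8 = 6$ confirms this is the full generic stabilizer. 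Since $\SL_3$ is smooth and reductive, this disposes of all characteristics different from $2$. One could alternatively confirm smoothness using Lemma \ref{smooth.sgp}, by comparing $\dim G(k)_v$ with $\dim \g_v$.

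For $\car k = 2$, I would assemble the scheme-theoretic description of $G_v$ from the results of Guralnick--Lawther and Garibaldi--Guralnick. By \cite{GurLawther1}, the stabilizer in $G(k)$ of a generic $v$ is the group of $k$-points of a smooth, connected, $8$-dimensional subgroup $A_1 U_5$, where $A_1$ is a subgroup of type $A_1$ and $U_5$ is a $5$-dimensional unipotent normal complement (so that $(A_1 U_5)_{\mathrm{red}} = A_1 \ltimes U_5$ is the identity component of the reduced stabilizer). By \cite{GG:irred}, the Lie algebra stabilizer satisfies $\dim \g_v = 9$. Hence $(G_v)_{\red} = A_1 U_5$ has dimension $8$ while $\Lie(G_v)$ exceeds this by one, so $G_v$ is non-smooth and $G_v/(G_v)_{\red}$ is a height-one infinitesimal group scheme of Lie algebra dimension one. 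Non-reductivity is immediate from the presence of the unipotent radical $U_5$.

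The remaining issue, which I expect to be the main obstacle, is to identify the infinitesimal quotient as $\galpha_2$ rather than $\mu_2$ (the only two $1$-dimensional infinitesimal group schemes of height one over $k$ in characteristic $2$). To distinguish, I would fix an explicit generic vector $v \in V$, pick $x \in \g_v$ outside $\Lie(A_1 U_5)$, and compute the restricted square $x^{[2]} \in \g$ modulo $\Lie(A_1 U_5)$: vanishing gives $\galpha_2$, while $x^{[2]} \equiv x \pmod{\Lie(A_1 U_5)}$ (up to rescaling) gives $\mu_2$. A direct calculation in a Chevalley basis --- choosing $v$ carefully so that $\g_v$ can be written explicitly, and exploiting that all relevant root subspaces of $\g$ have known restricted $p$-power structure --- yields the former, so $G_v \cong (A_1 U_5).\galpha_2$. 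A minor technical point is to check that this restricted Lie algebra analysis is independent of the choice of generic $v$, which follows from the s.g.p.\ machinery of \S\ref{sgp.sec} applied after quotienting by $\ker[G \to \GL(V)]$ if needed.
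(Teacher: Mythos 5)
Your treatment of $\car k \ne 2$ is essentially the paper's (octonions, stabilizer $\SL_3$, dimension count $14 - 8 = 6$), apart from the slip that a generic trace-zero octonion generates a two-dimensional quadratic \'etale subalgebra rather than a quaternion subalgebra; the stabilizer is the $\SL_3$ fixing that quadratic subalgebra pointwise. For $\car k = 2$ you also locate the crux correctly: $\dim (G_v)_{\red} = 8$ against $\dim \g_v = 9$, so what remains is to identify the one extra infinitesimal dimension.

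Your method for that last step has a genuine gap. You assert that $G_v/(G_v)_{\red}$ is a height-one infinitesimal group scheme, so that it must be $\galpha_2$ or $\mu_2$, and propose to distinguish these by computing $x^{[2]}$ modulo $\Lie(A_1U_5)$. Nothing in the data you have assembled forces height one: an infinitesimal group scheme with one-dimensional Lie algebra can just as well be $\galpha_4$, $\mu_4$, or various other groups of higher height, and the restricted square of a generator of $\g_v/\Lie((G_v)_{\red})$ only determines the first Frobenius kernel of the quotient --- it cannot tell $\galpha_2$ from $\galpha_4$. (It is also the one step you defer rather than carry out.) The paper works at the level of the group scheme instead: it writes down the integral matrix of the one-parameter subgroup $x_{\alpha_1}(t)$ on the Weyl module as in \eqref{G2.1}, uses the fact that $G$ is transitive on nonzero vectors of $L(\omega_2)$ in characteristic $2$ to take $v$ to be the highest weight vector (so no genericity or independence-of-$v$ argument is needed), and reads off that the scheme-theoretic stabilizer of the relevant weight vector inside a short root subgroup is cut out by $t^2 = 0$ --- the entry $2t$ dies mod $2$ while the entry $t^2$ survives --- thereby exhibiting an explicit $\galpha_2$ inside $G_v$ that maps isomorphically onto $G_v/(G_v)_{\red}$. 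Some such group-level computation is needed to pin down the quotient; the Lie-algebra-level test you propose cannot close the argument on its own.
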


\begin{proof}
Over $\Z$, we take an ordered basis of the 7-dimensional Weyl module $V(\omega_2)$ consisting of weight vectors $v_\mu$ for the weights $\mu = 2\alpha_1 + \alpha_2$, $\alpha_1 + \alpha_2$, $\alpha_1$, $0$, $-\alpha_1$, $-(\alpha_1+\alpha_2)$, $-(2\alpha_1+\alpha_2)$ respectively and such that the root element $e_{\alpha_1}$ and corresponding 1-parameter subgroup $x_{\alpha_1} \!: \Ga \to G_2$ corresponding to the root $\alpha_1$ are given by the matrices
\begin{equation} \label{G2.1}
e_{\alpha_1} = 
\left(
\begin{smallmatrix}
0&-1 \\
&0 \\
&&0 & 1 &  \\
&&&0&2 \\
&&&&0 \\
&&&&&0&-1 \\
&&&&&&0 
\end{smallmatrix}
\right)
\quad \text{and} \quad
x_{\alpha_1}(t) =  \exp(t e_{\alpha_1})=
\left(
\begin{smallmatrix}
1&-t \\
&1 \\
&&1 & t & t^2 \\
&&&1&2t \\
&&&&1 \\
&&&&&1&-t \\
&&&&&&1 
\end{smallmatrix}
\right)
\end{equation}
as in the proof of \cite[Prop.~5.2.14]{GurLawther}.

If $\car k \ne 2$, then $V$ is the base change to $k$ of the Weyl module, and the stabilizer $G_v$ is $\SL_3$.  This can be seen from the further explicit calculations in the proof of \cite[Prop.~5.2.14]{GurLawther} or as in \cite{Stewart:min} or
by identifying $V$ with the space of trace zero octonions as in \cite[p.~507, Exercise 6c]{KMRT}.

If $\car k = 2$, then $V$ is obtained from the Weyl module over $k$ by
modding out by the span of $v_0$.  The action of $G$ on $L(\omega_2)$
preserves the alternating bilinear form obtained from $q$ on
$L(\omega_2)$ and so gives an inclusion $G \to \Sp_6$.  For any finite
field $K$ of characteristic 2, one has  $\Sp_6(K) = G(K) \Sp_6(K)_v$
for any nonzero $v \in L(\omega_2)$ \cite{LPS:fact}.
The same factorization of $\Sp_6$ therefore holds over the algebraic
closure of $\F_2$ and so over the algebraically closed field $k$ of
characteristic 2.  The transitivity of the action for $G(k)$ now
follows from the same transitivity for $\Sp_6(k)$.

Here is an alternate argument that $G(k)$ acts transitively.
For each $y \in V$, pick $x \in V(\omega_2)$ such that $x \mapsto y$.  One can argue (e.g., by interpreting the Weyl module as the trace zero subspace of the octonions) that the $G$-invariant quadratic from $q$ on the Weyl module is not zero on $v_0$, so by scaling $q$ we may assume that $q(v_0) = 1$.  For each $\lambda \in k$, we have $q(x + \la v_0) = q(x) + \la^2$, so there is a unique choice of $x$ such that $x \mapsto y$ and $q(x) = 0$.  Since $G(k)$ has two orbits on the hypersurface $q = 0$, it has two orbits on $L(\omega_2)$.    Note that this argument shows that, on the level of $k$-points, the stabilizer agrees with the stabilizer of the highest weight vector, i.e., is $A_1 U_5$.  (Or see \cite[Prop.~5.2.14]{GurLawther}.)

It remains to verify that the stabilizer is not smooth.
One can read off the action of elements of a Chevalley basis on $V$ by writing them as matrices as we have done above and deleting the 4th row and column.  By the transitivity of  $G(k)$, we may pick the highest weight vector $v = v_{2\alpha_1 + \alpha_2}$ as a generic vector.  The Lie algebra stabilizer $\g_v$ is normalized by the maximal torus $T$ in $G$ underlying these calculations (because $Tv \subseteq kv$), so $\g_v$ is a sum of $\g_v \cap \tor$ and those root subalgebras $\g_\alpha$ that belong to $\g_v$.
We note from \eqref{G2.1} that
$e_{\alpha_1}$ annihilates the image of $v_{-\alpha_1}$ in $V$.  Since $\alpha_1$ and $2\alpha_1 + \alpha_2$ are both short roots, they are in the same orbit under the Weyl group, and it follows that $e_{-(2\alpha_1 + \alpha_2)}$ annihilates $v$, so $\dim \g_v \ge 9$.  One can check that this is all of $\g_v$ by verifying that each of the remaining four root subalgebras $\g_\alpha$ for negative $\alpha$ do not annihilate $v$ or by using a computer to find a vector $v' \in V$ such that $\dim \g_{v'} = 9$.
\end{proof}

We remark that in the proof above one can read off from \eqref{G2.1} that the stabilizer of $v_{-\alpha_1}$ in $\im x_{\alpha_1}$ has $R$-points $\{ x_{\alpha_1}(t) \mid \text{$t \in R$ such that $2t = 0$} \}$ and Lie algebra the root subspace $\g_{\alpha_1}$.  (The isomorphism class of this group scheme is generally denoted $\galpha_2$.)  In this way, we can concretely see the source of the extra dimension in $\g_v$.

\begin{lem}[$\wedge^3 k^7$]  \label{w3k7}
For $G = \SL_7$ and $V = \wedge^3 k^7$, the stabilizer $G_v$ of a generic vector $v \in V$ is a simple algebraic group of type $G_2$.  In particular, $G_v$ is smooth.
\end{lem}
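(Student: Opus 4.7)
The plan is to read off the generic Lie-algebra stabilizer dimension from Table \ref{meta.small}, compare it with the $k$-point stabilizer computed in \cite{GurLawther1}, and deduce both smoothness and a scheme-theoretic identification $G_v \cong G_2$.

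First, since $\SL_7$ is smooth and Table \ref{meta.small} records $\dim k[\wedge^3 k^7]^{\SL_7} = 1$ in every characteristic (as part of Theorem \ref{summ.thm}, via \cite{luebeck}, \cite{GG:large}, and \cite{GurLawther1}), the generic $\SL_7$-orbit is a smooth homogeneous subvariety of $V = \wedge^3 k^7$ of dimension $\dim V - 1 = 34$, with tangent space $\g \cdot v$ at $v$. Hence for generic $v$,
\[
\dim \g_v \;=\; \dim \SL_7 - 34 \;=\; 14.
\]
By \cite{GurLawther1}, $(\SL_7)(k)_v = G_2(k)$ for generic $v$, where $G_2(k)$ is embedded in $\SL_7(k)$ via the standard $7$-dimensional representation of $G_2 = \Aut(O)$ on the trace-zero octonions $O_0$.

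Set $H := (\SL_7)_v$ for generic $v$. Then $H$ is a closed sub-group-scheme of $\SL_7$ satisfying
\[
14 \;=\; \dim H(k) \;\le\; \dim H \;\le\; \dim \Lie(H) \;=\; \dim \g_v \;=\; 14,
\]
so $H$ is smooth of dimension $14$, with $H(k) = G_2(k)$ as subgroups of $\SL_7(k)$. The standard map $\iota\!: G_2 \to \SL_7$ arising from the $7$-dimensional representation is a closed immersion of group schemes: its kernel is trivial on $k$-points (by simplicity of $G_2$ together with $Z(G_2) = 1$) and on Lie algebras (since $\omega_1 = 2\alpha_1 + \alpha_2$ is not divisible by $\car k$ in the weight lattice, so the representation is not a Frobenius twist of another). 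Thus $\iota(G_2)$ is a smooth closed $14$-dimensional sub-group-scheme of $\SL_7$, and $H$ and $\iota(G_2)$ are two smooth closed subschemes of $\SL_7$ with the same $k$-points. Over an algebraically closed field, a smooth closed subscheme is determined by its $k$-points, so $H = \iota(G_2) \cong G_2$.

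The main obstacle is the claim that $\iota$ is a closed immersion of schemes (not merely an injection on $k$-points) in small characteristics. The $k$-point part is routine since normal sub-group-schemes of the simple group $G_2$ are trivial or everything, but the infinitesimal part reduces to checking $\g_2 \to \sl_7$ is injective, which in characteristics $2$ and $3$ requires either the weight-divisibility observation above or a direct Chevalley-basis verification. Once this is in hand, the rest of the argument is formal given the two inputs $\dim k[V]^G = 1$ and $(\SL_7)(k)_v = G_2(k)$.
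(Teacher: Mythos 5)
There is a genuine gap at the very first step, and it is exactly the point where the difficulty of the lemma lives. From $\dim k[V]^G = 1$ you correctly get that the generic orbit $Gv$ has dimension $34$, hence that $\dim G(k)_v = \dim (G_v)_{\red} = 14$. But you then assert that the tangent space to the orbit at $v$ is $\g\cdot v$ and conclude $\dim\g_v = 14$. That identification of the tangent space holds only when the orbit map $G \to Gv$ is separable, which is equivalent to the smoothness of $G_v$ --- the statement you are trying to prove. In general one only has $\g\cdot v \subseteq T_v(Gv)$, so the orbit dimension gives the inequality $\dim\g_v \ge 14$, not equality. Lemma \ref{G2} of this very paper is a counterexample to your reasoning pattern: for $G_2$ on $L(\omega_2)$ in characteristic $2$ the orbit of a generic vector is dense ($\dim k[V]^G = 0$), yet $\dim\g_v = 9 > 8 = \dim G(k)_v$ and the stabilizer is not smooth. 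So nothing in your argument rules out the analogous failure here.

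The content of the paper's proof is precisely the missing upper bound $\dim\g_v \le 14$. After citing \cite[Prop.~3.2.17]{GurLawther1} for $G(k)_v = G_2(k)$ (so $\lsub := \Lie((G_v)_{\red})$ has dimension $14$), the paper observes that $\g_v$ is a $G(k)_v$-invariant subalgebra of $\sl_7$ containing $\lsub$. For $\car k \ne 2,7$ one has $\sl_7 = \so_7 \oplus L(2\omega_2)$ as a $G_2$-module, and the only Lie algebra strictly between $\lsub$ and $\sl_7$ is $\so_7$; that possibility is excluded because the restriction of $\wedge^3 k^7$ to $\SO_7$ has head the spin representation, whose generic stabilizer in $\so_7$ is again only $14$-dimensional. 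In characteristics $2$ and $7$ the bound is verified by computer. Your closing paragraph (identifying the smooth $14$-dimensional group scheme $H$ with the image of $G_2 \to \SL_7$ via its $k$-points, after checking that this map is a closed immersion on Lie algebras) is fine and would complete the proof \emph{once} $\dim\g_v = 14$ is actually established; but as written, the key equality is assumed rather than proved.
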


\begin{proof}
In the case $k = \C$, this result goes back at least to \cite{Engel}, see \cite{Agricola} for context.  For general $k$, \cite[Prop.~5.2.17]{GurLawther} shows that $G(k)_v$ are the $k$-points of a subgroup of type $G_2$ so that the tautological representation of $\SL_7$ restricts to the Weyl module $V(\omega_2)$ of $G_2$.  In the notation established above, $\dim \lsub = 14$.  It remains to show that $G_v$ is smooth, i.e., that $\g_v = \lsub$, equivalently that $\dim \g_v = 14$.

If $\car k \ne 2, 7$, then, as a representation of $G_2$, $\sl_7 = \so_7 \oplus L(2\omega_2)$, where $\so_7$ can be identified with skew-symmetric matrices and $L(2\omega_2)$ with the trace zero symmetric matrices, and $\so_7/\lsub$ is the natural representation of $G_2$.  The only Lie algebra lying between $\lsub$ and $\sl_7$ is $\so_7$.  However, the restriction of $V$ to $\so_7$ has head the spin representation with generic stabilizer $G_2$, so the stabilizer in $\so_7$ of a generic vector in $V$ can be no larger than $\lsub$, whence $\g_v \ne \so_7$, completing the proof in this case.

If $k$ has characteristic 2 or 7, one uses a computer to find a $v' \in V$ such that $\dim \g_{v'} = 14$.

(Alternatively, \cite[Prop.~5.2.17]{GurLawther} shows that an s.g.p.\ exists on the level of $k$-points.  Then, for any specific choice of $\car k$ --- whether 2, 7, or something else --- it suffices by Lemma \ref{smooth.sgp} to use a computer to find a $v \in V$ such that $\dim \g_v = 14$.)
\end{proof}

Some unusual features of Lie algebras of groups of type $G_2$ when $\car k$ is 2, 3, or 7 are discussed in \cite{CRElduque}.

For $(G, V) = (\SL_8, \wedge^3 k^8)$, one needs to show that $\dim \g_v \le  8$.  This can be argued in a manner similar to Lemma \ref{w3k7}.  Alternatively, the stabilizer $\g_v$ is computed explicitly in \cite[pp.~87--90]{SK}.  The latter proof goes through if $\car k \ne 2, 3$.  In the remaining characteristics, $\dim \g_v \le 8$ can be verified by computer. 

\smallskip

Consider now the case $(G, V) = (E_6$, minuscule).  As a representation of $F_4$, $\lieE_6 / \lieF_4$ is the smallest nontrivial Weyl module of $F_4$, $V(\omega_4)$, of dimension 26.  (See \cite{ChevSchaf} for a view of this statement from the perspective of Jordan algebras.)  The image of $\g_v$ in $V(\omega_4)$ is contained in the radical.  If $\car k \ne 3$, then this radical is zero and $\g_v = \lsub$.  If $\car k = 3$, then the radical is $\z(\lieE_6)$, and again the image of $\g_v$ is zero.  (For this case and the case of ($E_7$, minuscule) in the following paragraph, smoothness of the generic stabilizer is also contained in \cite{Stewart:min}.)

The representations $(G, V) = (\SL_6/\mu_3, \wedge^3 k^6)$, $(\Sp_6$, ``spin''), and ($E_7$, minuscule), all belong to a family of representations considered in \cite{Roe:extra} and \cite[\S12]{G:lens}.  In each case, there is a group $\Gt$ and a simple root $\alpha$ of $\Gt$ that is the only one not orthogonal to the highest root of $\Gt$, $G$ is a subgroup of $G$ generated by the roots of $\alpha$-height zero, and the root subalgebras of $\alpha$-height 1 in $\gt$ span a $G$-submodule of $\gt$ equivalent to $V$.  Moreover, there is a unique simple root $\beta$ of $G$ not orthogonal to the highest weight of $V$, and $\beta$ has coefficient 1 in the highest root of $G$ so that the root subgroups of $G$ corresponding to roots of $\beta$-height zero generate a group $G_0$ of the same description as $G(k)_v$.
If $\car k \ne 2$, the $G$-orbits in $V$ are described in terms of the root system of $\Gt$ in \cite{Roe:extra} and a representative generic vector $v$ is provided such that $G_0(k) \subseteq G(k)_v$, whence equality.  Grading $\g$ by $\beta$-height, we find $\g$ contains $\g_0$, a rank 1 torus $\tor$, and subspaces $\g_1$, $\g_{-1}$ spanned by roots of $\beta$-height 1 and $-1$ respectively; these latter two subspaces are irreducible representations of $G_0$ \cite[Th.~2c]{ABS}.  That is, the composition series of $\g_v / \lsub$, as a representation of $G_0(k)$, has simple factors contained in $\tor$, $\g_1$, $\g_{-1}$.  The explicit description of $v$ from \cite{Roe:extra} shows that these cannot be contained in $\g_v$ as in \cite[12.2]{G:lens}, whence $G_v$ is smooth.  If $\car k = 2$, we verify that $G_v$ is smooth using Magma.

The case $(G, V) = (\PSp_{2\ell}, L(\omega_2))$ for $\ell \ge 3$ has a smooth s.g.p.\ by Lemma \ref{sp.wedge2}.
This completes the proof of Theorem \ref{stab.smooth}.

\section{Representations with ``few'' invariants: proof of Theorem \ref{kVG}} \label{few.sec}

We now classify those irreducible representations $V$ such that the $\dim V/G < \dim G$; there are relatively few. 

\begin{proof}[Proof of Theorem \ref{kVG}]
If $\dim V \le \dim G$, then $V = L(\hst)$ or $(G, \car k, V)$ belongs to Table \ref{meta.small} by Theorem \ref{summ.thm}\eqref{summ.eq} and \eqref{summ.small}, so assume $\dim V > \dim G$.  Then, by  Theorem \ref{summ.thm}\eqref{summ.big}, the stabilizer of a generic vector in $V$ is a finite group scheme, whence
\begin{equation}
\dim k[V]^G = \dim V - \dim G.
\end{equation}
That is, we are reduced to determining the faithful irreducible representations of $G$ such that $\dim G < \dim V < 2 \dim G$.  This is done in the next proposition, completing the proof of Theorem \ref{kVG}.
\end{proof}

\begin{prop}
Let $V$ be a faithful and irreducible representation of a simple algebraic group $G$ over an algebraically closed field $k$.  If $\dim G < \dim V < 2 \dim G$, then up to graph automorphism $(G, \car k, V)$ appears in Table \ref{meta.big} or \ref{meta.few}.
\end{prop}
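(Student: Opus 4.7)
My plan is a direct enumeration argument: the hypothesis $\dim V < 2\dim G$ confines us to a finite (or at worst, bounded-per-rank) list of candidates that can be extracted from L\"ubeck's tables \cite{luebeck} and then compared against Tables \ref{meta.big} and \ref{meta.few}. The entry in Table \ref{meta.few} for $\dim k[V]^G$ is automatic from Theorem \ref{summ.thm}\eqref{summ.big}, which gives $\dim k[V]^G = \dim V - \dim G$ whenever $\dim V > \dim G$, so the real task is to identify the triples $(G, \car k, V)$.

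First I would reduce to the case where the highest weight $\la$ of $V = L(\la)$ is restricted. If $\car k = p > 0$ and $\la = \la_0 + p^r \la_1$ with $\la_0$ restricted and $\la_1 \neq 0$, Steinberg's tensor product theorem gives $\dim V = \dim L(\la_0) \cdot \dim L(p^{r-1}\la_1)$; the constraint $\dim V < 2\dim G$ combined with faithfulness ($\la_s \ne 0$ in special characteristic, as in \S\ref{back.sec}) leaves only the rows of Table \ref{meta.big} of shape $L(\omega_1 + p^e\omega_j)$ for $\SL_\ell$. So I may assume $\la$ is restricted.

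Second, I would apply \cite{luebeck}, which tabulates the restricted irreducible representations of every simple algebraic group whose dimension lies below a uniform bound exceeding $2\dim G$ for each relevant rank. For the exceptional types and classical types of small rank this gives a finite list that can be checked mechanically against $\dim G < \dim V < 2\dim G$ by the Weyl/Jantzen dimension formula. For the infinite families $A_\ell, B_\ell, C_\ell, D_\ell$, an asymptotic Weyl-dimension estimate shows that for $\la$ of ``large support'' (multiple nonzero coefficients $c_\omega$, a coefficient $\geq 3$, or a fundamental weight $\omega_i$ with $i$ away from the endpoints of the diagram), $\dim L(\la)$ grows polynomially in $\ell$ with degree strictly greater than $2$, hence outpaces $\dim G \sim \ell^2$; this confines the surviving $\la$ to a short list (natural modules, $\omega_2$, $\omega_3$, $S^2$, $S^3$, $\omega_1 + \omega_j$, $2\omega_i$, and a few sporadic spin weights).

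Third, after discarding representations that factor through the very special isogeny, I would evaluate $\dim V$ precisely for each surviving candidate, using \cite{luebeck} in those prime characteristics where $L(\la)$ is a proper quotient of the Weyl module (e.g., the $\eps$ correction for $(G_2, \ne 2, L(2\omega_2))$, the drop for $(\SL_5, 2, L(\omega_1 + \omega_3))$, and so on). The triples meeting $\dim G < \dim V < 2\dim G$ then partition into those already in Table \ref{meta.big} (where $G_v \ne 1$ generically) and a complementary set which should coincide exactly with Table \ref{meta.few}. The main obstacle is ensuring completeness for the classical families at moderate rank, where several representations have $\dim V$ suspiciously close to $2\dim G$ and must be verified individually with characteristic-dependent dimension formulas; this is tedious bookkeeping rather than conceptual difficulty, which is exactly what L\"ubeck's tables are built to make tractable.
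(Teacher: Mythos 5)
Your overall strategy---reduce to the restricted case and then read off the answer from L\"ubeck's tables---is the same as the paper's, and your treatment of the restricted case is fine (the paper disposes of it in one sentence by citing \cite{luebeck}). The problem is your first step. You assert that in the non-restricted case the constraints ``leave only the rows of Table \ref{meta.big} of shape $L(\omega_1 + p^e\omega_j)$ for $\SL_\ell$,'' but this is false: the triples $(\Sp_4, p, L(\omega_1 + p^e\omega_1))$ with $\dim V = 16$ and $10 < 16 < 20$, and $(\SL_4, p, L(p^e\omega_1 + \omega_2))$ (or $L(\omega_1 + p^e\omega_2)$) with $\dim V = 6 \cdot 4 = 24$ and $15 < 24 < 30$, are both non-restricted, both satisfy $\dim G < \dim V < 2\dim G$, and both land in Table \ref{meta.few}, not Table \ref{meta.big}. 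So your reduction silently discards cases that the proposition is supposed to produce.

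Moreover, the assertion is not backed by any argument. To make this step rigorous you need the quantitative input the paper supplies: let $m$ be the smallest dimension of a nontrivial irreducible restricted representation of $G$; then $m^3 > 2\dim G$ for every type (so at most two Frobenius layers $\la_0, \la_e$ can be nonzero), and $m^2 > 2\dim G$ except for types $A_\ell$ and $C_2$ (so outside those types there are no non-restricted examples at all). Within types $A_\ell$ and $C_2$ one must then bound $\dim L(\la_i) < 2(\dim G)/m$ for the larger factor, which forces each factor to be either the $m$-dimensional module (giving the $\SL_{\ell+1}$ and $\Sp_4$ rows) or, in the single exceptional case $A_3$ with $\la_i = \omega_2$ of dimension $6$, the $24$-dimensional representations above. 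Without this counting, your claim of completeness in the non-restricted case is a gap, and as stated it is also incorrect.
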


\begin{proof}
Suppose first that the highest weight $\la$ of $V$ is restricted.  Then the tables in \cite{luebeck} list all possibilities for $(G, \car k, V)$, completing the proof in this case.

If $\car k = 0$, then that is the only case, so suppose $p := \car k \ne 0$.  

We use the following observation.  Put $m$ for the smallest dimension of a nontrivial irreducible restricted representation of $G$.  We note that $m^2 > 2\dim G$ unless $G$ has type $A_\ell$ or $C_2$ and that $m^3 > 2 \dim G$ regardless of the type of $G$.

Write $\la = \sum_{e \ge 0} p^e \la_e$ where $\la_e$ is restricted for all $e$, so $\dim V = \prod_e \dim L(\la_e)$.  Because $V$ is faithful and not restricted, $\la_0 \ne 0$ and $\la_e \ne 0$ for some $e > 0$.  If a third summand is not zero, then $\dim V \ge m^3 > 2 \dim G$, and we conclude that $\la = \la_0 + p^e \la_e$ for some $e > 0$ with $\la_0, \la_e \ne 0$.

For $G$ of type $A_\ell$, $m = \ell + 1$, and the representations of this dimension are $L(\omega_1)$ and $L(\omega_1)^* = L(\omega_\ell)$.  The representations $L(\omega_1 + p^e \omega_\ell)$ and $L(\omega_1 + p^e \omega_1)$ appear in Table \ref{meta.big}.  Similarly, for $G$ of type $C_2$, $m = 4$, corresponding to $L(\omega_1)$ and the representation $L(\omega_1 + p^e \omega_1)$ appears in Table \ref{meta.few}.

So suppose that $\dim L(\la_i) > m$ for $i = 0$ or $e$.  Noting that $\dim L(\la_i) < 2 (\dim G) / m$, there are few possibilities for $\la_i$.  If $G$ has type $C_2$, then the upper and lower bounds on $\dim L(\la_i)$ in the preceding two sentences are 4 and 5, so there are no possibilities.  If $G$ has type $A_\ell$, then we find only one possibility, that $G$ has type $A_3$ and $\la_i = \omega_2$ with $\dim L(\omega_2) = 6$.  The resulting representations appear in the third row of Table \ref{meta.few}.
\end{proof}

\subsection*{When $k[V]^G$ is a polynomial ring} 
Regarding cases where $\dim V/G$ is small, it is clear that $\dim V/G = 0$ if and only if there is a dense open $G$-orbit in $V$, if and only if $V/G = \Spec k = \Aff^0$.  Similarly, $\dim V/G \le 1$ if and only if there is a dense open $G$-orbit in $\P(V)$, see, for example, \cite[Prop.~12]{Popov:14} for $\car k = 0$ or \cite[Prop.~6.1]{BGL} for $\car k$ arbitrary.  We have: If $\dim V/G = 1$, then $V/G \cong \Aff^1$.

If $\dim V/G = 2$, $G$ simple, and $\car k = 0$, then $V/G \cong \Aff^2$ by \cite[Th.~2.4]{Kempf:some}.  Is the same conclusion true if $k$ is allowed to have prime characteristic?  We prove the following.

\begin{prop} \label{adj.cofree}
Let $V$ be a faithful irreducible representation of a simple algebraic group $G$.  If $\dim V/G = 2$, then $V/G \cong \Aff^2$ unless perhaps $(G, \car k, V)$ is $(\Spin_5, 5, L(\omega_1 + \omega_2))$ or $(\Spin_{13}$, any, spin).
\end{prop}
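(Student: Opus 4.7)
The plan is to enumerate all triples $(G, \car k, V)$ with $\dim V/G = 2$ and then verify polynomiality case by case. By Theorem \ref{summ.thm}, the situation splits into three regimes: (i) $V = L(\hst)$, in which case the conclusion is already contained in Corollary \ref{adj.inv}; (ii) $\dim V > \dim G$, where the generic stabilizer is finite and therefore $\dim V = \dim G + 2$; and (iii) $\dim V < \dim G$, in which case $(G, \car k, V)$ lies in Table \ref{meta.small} and $\dim k[V]^G = \dim V - \dim G + \dim G_v = 2$.

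For regime (ii), the equality $\dim V = \dim G + 2$ combined with L\"ubeck's tables (as in the derivation of Table \ref{meta.few}) leaves only a short list of candidates. These include the classical binary-quartic action of $\SL_2$ on $L(4)$ in characteristic $\ne 2, 3$, the ternary-cubic action of $\SL_3$ on $S^3 k^3$ in characteristic $\ne 3$, and the exceptional case $(\Spin_5, 5, L(\omega_1 + \omega_2))$ of Proposition \ref{inf.restrict}. For the classical cases I would produce two explicit invariants of small degree (degrees $2, 3$ for binary quartics; degrees $4, 6$ for ternary cubics) and verify algebraic independence using the Jacobian criterion \cite[Th.~3.9.4]{DerksenKemper}, as in the proof of Proposition \ref{pgl.ad}. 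For regime (iii), reading Table \ref{meta.small} with the known s.g.p.\ data isolates a similarly short list, of which $(\Spin_{13}, \cdot, \text{spin})$ is the entry with a large reductive s.g.p.\ (Remark \ref{Spin13}). For each remaining entry I would proceed uniformly: let $G_*$ be the s.g.p.\ furnished by Theorem \ref{sgp}, form the slice $V_1 := V^{(G_*)^\circ}$, and use a section argument (in the style of Lemma \ref{psi.sgp} and Example \ref{coCRT}) to reduce $k[V]^G$ to the invariants of the finite group $W^* := N_G(G_*)/G_*$ acting on a $2$-dimensional subspace of $V_1$. The Chevalley-Shephard-Todd theorem in dimension two, in the modular form of \cite[Th.~5.1]{Nakajima} or \cite[Prop.~7.1]{KemperMalle}, then yields a polynomial ring whenever $W^*$ acts by pseudoreflections, which is easy to check in this low dimension.

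The main obstacle, and the reason for the two exceptions, is precisely when the reduction to a finite pseudoreflection group on a $2$-dimensional slice breaks down. For $(\Spin_5, 5, L(\omega_1 + \omega_2))$ the s.g.p.\ is the infinitesimal group $\mu_5$, so $(G_*)^\circ = G_*$ is not reduced and the slice $V^{(G_*)^\circ}$ equals $V$ itself; there is no honest reduction, and one would have to compute $k[V]^{\Spin_5}$ directly in characteristic $5$, where the Chevalley restriction theorem does not apply cleanly. For $(\Spin_{13}, \cdot, \text{spin})$ the s.g.p.\ has disconnected component group $\Z/2 \times \Z/2$ and a $16$-dimensional identity component, and although a $2$-dimensional section does exist, verifying that $W^*$ acts by pseudoreflections and that the restriction map is surjective in all characteristics requires a case-by-case computation that I would leave aside. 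These are the two residual cases responsible for the ``unless perhaps'' in the statement.
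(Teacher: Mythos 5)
Your enumeration of the cases (via Theorem \ref{summ.thm}/Theorem \ref{kVG}, with $L(\hst)$ disposed of by Corollary \ref{adj.inv}) matches the paper's, and your case list is essentially right. The problem is in the verification step, where there are two genuine gaps. First, for the regime-(ii) classical cases: the Jacobian criterion of \cite[Th.~3.9.4]{DerksenKemper} is a criterion for invariants of a \emph{finite} group --- it requires $\prod \deg f_i = |W|$ --- and it is used that way in the proof of Proposition \ref{pgl.ad}. For the connected reductive groups $\PGL_2$ and $\PGL_3$ acting on quartics and cubics, exhibiting two algebraically independent homogeneous invariants only shows that $k[f_1,f_2]$ is a polynomial subring of $k[V]^G$ of the right transcendence degree; it does not show that $f_1,f_2$ \emph{generate} $k[V]^G$, which is the whole point. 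The paper closes this by observing that these representations (together with the natural representation of $F_4$ in characteristic $\ne 2,3$) are $\theta$-groups and invoking Levy's theorem \cite[Th.~4.23]{Levy:Vinberg}, which is precisely the positive-characteristic substitute for "the invariants are the invariants of the little Weyl group."

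Second, for regime (iii): the uniform reduction you propose, $k[V]^G \cong k[V^{G_*}]^{N_G(G_*)/G_*}$ followed by Chevalley--Shephard--Todd in dimension two, is the Luna--Richardson argument, which the paper explicitly flags as available only in characteristic zero (see the remark before Lemma \ref{psi.sgp}). In prime characteristic the restriction map $k[V]^G \to k[V_1]^{W^*}$ need not be surjective --- the failure of the Chevalley Restriction Theorem for $(\Sp_{2\ell},2)$ recorded in the paper is exactly this phenomenon --- so "reduce to a finite pseudoreflection group on a $2$-dimensional slice" is not a step you can take for free; it is the hard part. The paper avoids it by citing explicit computations (\cite[Examples 8.3, 8.5, 11.4]{GG:simple}) for $\PSp_6$, $\PSp_8$ and $F_4$ in characteristic $2$, and Levy again for $F_4$ otherwise. (Two smaller points: your explanation of the $(\Spin_5,5,L(\omega_1+\omega_2))$ exception is off --- $V^{\mu_5}$ is a proper sum of weight spaces, not all of $V$, since $\mu_5$ is not in the kernel of a faithful action; and the ternary cubic case needs $\car k \ne 2$ as well as $\ne 3$, since in characteristic $2$ the relevant irreducible is the $9$-dimensional $L(\omega_1)\otimes L(\omega_1)^{[2]}$, which lands in Table \ref{meta.big} with $\dim k[V]^G = 1$.)
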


\begin{proof}
We apply the classification of possibilities for $(G, \car k, V)$ provided by Theorem \ref{kVG}.  By Corollary \ref{adj.inv}, we may assume that $V \ne L(\hst)$.  The minimum of $\dim k[V]^G$ for the representations in Table \ref{meta.few} is 5, so $(G, \car k, V)$ belongs to Table \ref{meta.small} or \ref{meta.big}.

For $(\PSp_6, \ne 3, L(\omega_2)$) and  $(\PSp_8$, 2, $L(\omega_2)$), the ring $k[V]^G$ is described in \cite[Examples 8.3, 8.5]{GG:simple} and it is polynomial.

The representations $(F_4, \ne 2, 3$, natural), $(\PGL_2, \ne 2, 3, S^4 k^2)$, and $(\PGL_3, \ne 2, 3, S^3 k^3)$ arise as $\theta$-groups (a.k.a.~Vinberg representations) where the overgroup is of type $E_6$, $A_2$, and $D_4$ and the automorphism $\theta$ of the overgroup has order 2, 2, and 3 respectively.  (See, e.g., \cite[p.~260--262]{PoV} or \cite[p.~1154]{RLYG}.)  By \cite[Th.~4.23]{Levy:Vinberg}, $k[V]^G$ is a polynomial ring.  

The case $(F_4$, 2, natural) is not covered by the result from \cite{Levy:Vinberg}.  Instead we refer to \cite[Example 11.4]{GG:simple}, which shows that $k[V]^G$ is a polynomial ring with generators of degree 2 and 3.
\end{proof}


\section{Regular orbits}  \label{sec:regular}

In this section we consider when a simple algebraic group $G$ acting on an irreducible
module $V$ has a regular orbit, i.e., when there is some $v \in V$ with $G_v = 1$.  We will 
consider this first just for the points and then consider the same problem for
group schemes, resulting in a proof Theorem \ref{t:regorbit}.

The following  is an immediate consequence of \cite[Table 1]{GurLawther}.  We only state the result in characteristics other than $2$, $3$, and $5$
as this is what we shall use. 

\begin{prop} \label{t:regorbitpoints}   Let $G$ be a simple algebraic group over
the algebraically closed field $k$ of characteristic $p := \car k \ne 2, 3, 5$ such that $G_v(k)$ acts faithfully
and irreducibly on $V$.     If $G_v(k)$ is finite for some $v$, then either
$G_w(k) =1 $ for some $w$ or  the following holds (up to twists by
Frobenius or graph automorphisms), 
$p \ne 0$,  $G$ is a quotient of $\SL_{\ell +1}$, and $V=L(\omega_1 + p^e \omega_1)$ or $L(\omega_1 + p^e \omega_\ell)$ for some $e \ge 1$.
\end{prop}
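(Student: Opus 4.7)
The plan is to deduce the proposition by inspecting \cite[Table 1]{GurLawther1}, which is the source that underlies Table \ref{meta.big}.

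First, I would reduce to the case $\dim V > \dim G$. Since the dimension of the stabilizer is an upper semicontinuous function of $v \in V$, the hypothesis that $G_v(k)$ is finite for some $v$ implies $G_w(k)$ is finite for generic $w \in V$. By Theorem \ref{summ.thm}, this rules out both the adjoint case $V = L(\hst)$ (where Theorem \ref{summ.thm}\eqref{summ.eq} shows the s.g.p.\ has identity component a maximal torus) and the entries of Table \ref{meta.small} (whose last column records positive-dimensional generic stabilizers). Consequently $\dim V > \dim G$, and Theorem \ref{summ.thm}\eqref{summ.big} places $(G,\car k, V)$ among the rows of Table \ref{meta.big} (together with cases where $G_v$ is already trivial, for which the conclusion holds vacuously).

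Second, I would run through the rows of Table \ref{meta.big}, invoking \cite[Table 1]{GurLawther1}. That reference not only computes the generic stabilizer $G(k)_v$ in each such case but also records, for each $(G,\car k, V)$, whether there exists some $w \in V$ with $G_w(k) = 1$. The claim to verify is that the rows for which no such $w$ exists are precisely those in families (1) and (2). Case (1) is the isolated entry $(\SL_4, 3, L(\omega_1+\omega_2))$; family (2) consists of the type $A_\ell$ representations $L(\omega_1+p^e\omega_1)$ and $L(\omega_1+p^e\omega_\ell)$ for suitable quotients of $\SL_{\ell+1}$ (note the distinction between $\SL_n$ and $\SL_n/Z$, which matters for whether $G_w(k)$ can vanish since central elements always fix every vector when they act trivially). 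These features can be read off directly from the relevant entries of \cite[Table 1]{GurLawther1}.

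Third, I would account for the qualifier ``up to twists by Frobenius or graph automorphisms.'' Both operations preserve the $G(k)$-orbit structure of $V$: a Frobenius twist of $V$ has the same underlying set of $k$-points with the same $G(k)$-action (the scheme-theoretic stabilizer changes, but $G(k)_v$ does not), while a graph automorphism intertwines $V$ with an equivalent representation. Hence the existence of a $w$ with $G_w(k)=1$ depends only on $(G,\car k, V)$ up to these operations, legitimizing the hedge in the statement.

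There is no real mathematical obstacle: the deep work is already contained in \cite{GurLawther1}, and the proposition amounts to translating the tabulated data into the form above. The only care required is in ensuring that the finite-stabilizer hypothesis propagates from a single point to generic points via semicontinuity, and that the exceptional families (1) and (2) are correctly extracted from the table---in particular, that no other entry in Table \ref{meta.big} fails to admit some $w$ with $G_w(k)=1$.
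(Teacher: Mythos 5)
Your proposal is correct and matches the paper's approach: the paper simply states that the proposition is an immediate consequence of Table~1 of \cite{GurLawther1}, which is exactly the table-inspection you carry out. Your added details (the semicontinuity reduction to $\dim V > \dim G$ via Theorem \ref{summ.thm}, and the observation that Frobenius twists do not change the $G(k)$-orbit structure) are sound elaborations of what the paper leaves implicit.
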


We now consider the case when there is a regular orbit for the group scheme.  Of course,
if $\car k=0$, then the previous result implies that there exists a regular orbit if and only
if the generic stabilizer is finite, if and only if $\dim V > \dim G$ (Theorem \ref{summ.thm}).

In the following, we write $P\Omega_n$ for $n = 3$ or $n \ge 5$ for the adjoint group of type $B_\ell$ (when $n = 2\ell+1$ is odd) or $D_\ell$ (when $n = 2 \ell$ is even).

\begin{lem}  \label{l:regorbitO}   
Let $G=P\Omega_n$ for $n =3$ or $n \ge 5$ with $\car k \ne 2$ and $V$ the nontrivial irreducible composition factor
of the symmetric square of the orthogonal module.  Then there exists $v \in V$ with
$G_v$ the trivial group scheme.
\end{lem}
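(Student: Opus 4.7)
The plan is to exhibit an explicit $v \in V$ with $G_v$ the trivial group scheme. Identify $V$ with the traceless symmetric $n \times n$ matrices, on which $\SO_n$ acts by $g \cdot v = g v g^T$; since $\{\pm I\}$ acts trivially on $V$, this action factors through $P\Omega_n$. By \S\ref{back.sec}, it suffices to produce $v$ with both $\g_v = 0$ and $G_v(k) = 1$.

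The key construction: take $v$ to be a symmetric, regular nilpotent matrix, i.e.\ having a single Jordan block of size $n$. To exhibit one, start with the standard Jordan block $v_0$ of size $n$, together with the anti-diagonal symmetric form $B$ with $B_{ij} = 1$ iff $i + j = n + 1$. A direct check gives $v_0^T B = B v_0$, i.e.\ $v_0$ is symmetric with respect to $B$. Since $\car k \neq 2$ and $k$ is algebraically closed, $B$ is isometric to the standard form, and a change of basis converts $v_0$ into the required symmetric regular nilpotent $v$. Nilpotence gives $v \in V$.

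Computing the stabilizers: since $v$ is regular in $\gl_n$, its centralizer there is $k[v]$, spanned by the symmetric matrices $I, v, \ldots, v^{n-1}$. For $X \in \so_n$ the infinitesimal condition $Xv + vX^T = 0$ becomes $[X,v] = 0$, so $\g_v = k[v] \cap \so_n$; this intersection of symmetric and antisymmetric matrices vanishes in $\car k \neq 2$. On the $k$-point side, $g \in \SO_n(k)_v$ centralizes $v$ and so lies in $k[v]$; since $v = v^T$ every element of $k[v]$ is symmetric, so $g g^T = I$ reduces to $g^2 = I$. Writing $g = \sum_{i=0}^{n-1} a_i v^i$ and equating coefficients in $g^2 = I$ inductively forces $a_0 = \pm 1$ and $a_i = 0$ for $i \geq 1$, so $g = \pm I$. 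This projects to $1$ in $P\Omega_n$: for $n$ even, $\{\pm I\}$ is the center of $\SO_n$; for $n$ odd, $-I \notin \SO_n$ altogether.

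Thus $G_v$ has $G_v(k) = 1$ and $\Lie(G_v) = \g_v = 0$, so $G_v$ is the trivial group scheme. The only nontrivial input is the existence of a symmetric regular nilpotent, provided by the anti-diagonal pairing; the remainder is a formal computation in the local ring $k[v] \cong k[t]/(t^n)$.
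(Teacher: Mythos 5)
Your construction is the same as the paper's: a symmetric regular nilpotent $v$, whose centralizer in $M_n(k)$ is $k[v]$ and consists of symmetric matrices, so that $\g_v = k[v] \cap \so_n = 0$ (symmetric meets skew is zero for $\car k \ne 2$) and any orthogonal matrix commuting with $v$ is a symmetric involution in $k[v]$, hence $\pm I$. That part is complete and correct. The genuine gap is at the very first step: the identification of $V$ with the traceless symmetric matrices $W$ is only valid when $\car k$ does not divide $n$. If $p := \car k$ divides $n$, then $I$ is traceless, $kI$ is a trivial submodule of $W$, and the nontrivial irreducible composition factor is the quotient $V = W/kI$; your argument then computes the stabilizer of $v \in W$ rather than of its image $\bar v \in V$, and the stabilizer of $\bar v$ is a priori larger (it is cut out by $gvg^{\T} \in v + kI$ and $[X,v] \in kI$ rather than by equality with $v$ and $0$). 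Since the lemma is asserted for all $n = 3$ or $n \ge 5$ with $\car k \ne 2$, this case is in scope. The paper handles the $k$-points of it in one sentence: $gvg^{-1}$ is nilpotent, and the only nilpotent matrix in $v + kI$ is $v$ itself, so one is reduced to the case you treated.

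Be aware that the infinitesimal side of the $p \mid n$ case is not a formality, so you cannot just wave at the quotient. The relaxed condition $[X,v] = \lambda I$ does admit nonzero skew-symmetric solutions when $p \mid n$: in the basis where the form is the anti-diagonal one and $v$ is the standard Jordan block, the matrix $X = \sum_{i=1}^{n-1} (-i\lambda) E_{i+1,i}$ satisfies $[X,v] = \lambda I$ and is skew with respect to that form precisely because $n\lambda = 0$. Thus the image of the regular nilpotent in $W/kI$ has $\g_{\bar v} \ne 0$, and a trace argument gives nothing since $\mathrm{tr}(\lambda I) = n\lambda = 0$. So for $p \mid n$ your chosen vector does not witness the conclusion at the level of group schemes, and this subcase needs a genuinely different idea (or a different vector); for the characteristics in which the lemma is actually applied in the proof of Theorem \ref{t:regorbit} this subcase does not arise, but as stated it is part of what you must prove.
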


\begin{proof}  Let $W$ denote the space of symmetric $n \times n$ matrices of trace $0$.
View $\SO_n(k)$ as the subgroup of $\SL_n(k)$  of matrices $A$ with $AA^{\top}=1$.  
Then $\SO_n(k)$ acts on $W$ by  conjugation with kernel the center.    Note that 
$W  = V$ unless $p$ divides $n$, in which case $V  = W/W_0$ where $W_0$ are the scalar matrices.

We recall that over an algebraically closed field, every matrix is similar to a symmetric matrix, see for example \cite[Lemma 3.1]{BGS}.   In particular, let $A$ be a symmetric nilpotent matrix with minimal polynomial
of degree $n$.     
The centralizer of $A$ in $M_n(k)$ is the subalgebra $k[A]$, which consists of symmetric matrices.
This shows that $\mathfrak{g}_A=0$, since $\mathfrak{g}$ is the Lie algebra of skew symmetric matrices. 
If $U$ is orthogonal and $UA=AU$, then  $U=f(A)$ for some polynomial $f$.  As $U$ is symmetric and orthogonal, it
is an involution.  There are no non-scalar involutions in $k[A]$ and so $G_A(k)=1$, whence
$G_A$ is the trivial group scheme.

This completes the proof if $p$ does not divide $n$.  If $p$ does divide $n$, the result follows
by observing that if any matrix commutes with $A$ modulo scalars, it commutes with $A$,
since the only nilpotent matrix in the set $A  + \lambda I$ is $A$.
\end{proof}

\begin{proof}[Proof of Theorem \ref{t:regorbit}]
Recall that $V$ is a faithful and irreducible representation of a simple group $G$ over a field $k$ of characteristic $\ne 2, 3, 5$. 

In case \eqref{reg.dim}, i.e., $\dim V \le \dim G$, then $\dim G_v > 0$ for generic $v \in V$ (Th.~\ref{summ.thm}), so there cannot be a regular orbit.  In case \eqref{reg.SL4}, $\dim \g_v > 0$ for generic $v \in V$, so $\dim \g_{v'} > 0$ for all $v' \in V$ and there cannot be a regular orbit.  In case \eqref{reg.usual}, there is no $v \in V$ with $G_v(k) = 1$ \cite[Prop.~5.1.8]{GurLawther}, so there cannot be a regular orbit.  In summary, if any of \eqref{reg.dim}, \eqref{reg.SL4}, or \eqref{reg.usual} hold, then the other three conditions fail.  

Therefore, we assume that $\dim V > \dim G$ and that we are not in case \eqref{reg.SL4} nor \eqref{reg.usual}, and we aim to prove the existence of a regular orbit.  We may assume the s.g.p.\ is not the trivial group scheme, whence by Theorem \ref{summ.thm}, up to graph automorphism $(G, \car k, V)$ belongs to Table \ref{meta.big}.  Lemma \ref{l:regorbitO} handles four rows of the table, including $(\SL_2 / \mu_2, \ne 2, 3, S^4 k^2)$ with $n = 3$ and $(\SL_4/\mu_4 \ne 2, L(2\omega_2))$ with $n = 6$.  The row $(\SL_2, \ne 2, 3, S^3 k^2)$ is from Example \ref{char3}.  

The remaining cases in Table \ref{meta.big} are examples of $\theta$-groups, where $G$ is the identity component of the subgroup of an overgroup $H$ fixed by an automorphism $\theta$ of finite order.  The cases for $(G, V)$ are:

\begin{enumerate}
\renewcommand{\theenumi}{\roman{enumi}}
\item \label{reg.sl3s3} $(\PGL_3, L(3\omega_1))$, with $H$ of type $D_4$ and $\theta$ of order 3; 
\item \label{reg.sl8w4}   $(\SL_8/\mu_4, L(\omega_4))$, with $H$ of type $E_7$ and $\theta$ of order 2;  
\item \label{reg.sl9}  $(\SL_9/\mu_3, L(\omega_3))$, with $H$ of type $E_8$ and $\theta$ of order 3;
\item  \label{reg.hspin16} $(\Spin_{16}/\mu_2, \text{half-spin})$, with $H$ of type $E_8$ and $\theta$ of order 2; or
\item \label{reg.psp8}  $(\PSp_8, L(\omega_4))$, with $H$ of type $E_6$ and $\theta$ of order 2.
\end{enumerate}

So  Let $G < H$ be as in the five cases above.  Since $p \ne 2,3$ or $5$,  $\car k$ is good for $H$.   
Then $G=C_H(\theta)^\circ$ and $\lsub$ is the direct sum of the eigenspaces of $\theta$.  Note that  $\g$ is the trivial
eigenspace and $V$ can be identified with the nontrivial eigenspace or one of the nontrivial eigenspaces if $\theta$
has order $3$ and in that case $V^*$ is the other eigenspace.    It follows by \cite[5.1.4]{GurLawther} that there exists 
a regular nilpotent element  $v \in V$ and $G_v(k)=1$.   It also follows by the computation in \cite{GurLawther} that 
the centralizer of $v$ in $H$  is contained in the sum of the nontrivial eigenspaces of $\theta$
whence its intersection with $\lsub$ is $0$.   Let $J=C_H(n)$.   So $J$ is an abelian group of dimension equal to the rank
of $H$ and $\theta$ normalizes $H$ with $C_J(\theta)=1$.  In good characteristic, the centralizer of $v$ in $\lsub$ is
the Lie algebra of $C_J(\theta)$ and this is $0$  since $\theta$ acts without fixed points on   $C_H(v)$ and so also on
its Lie algebra.  
\end{proof}

\section{Not-necessarily-semisimple representations} \label{section.sec}

Let $W$ be a section of a representation $V$ of an algebraic group $G$.  That is, there are $G$-invariant subspaces $V_1 \subseteq V_2 \subseteq V$ so that $W \cong V_2 / V_1$ as representations of $G$.  In this section, we discuss  connections between the stabilizers $G_w$ and $G_v$ of generic $w \in W$ and $v \in V$ respectively.

If $W$ is a summand of $V$, then one can take $w$ to be a projection of $v$ in $W$, in which case $G_w$ evidently contains $G_v$, compare \cite[Lemma 2.15]{Loetscher:edsep}.  Unfortunately, this statement does not easily extend to the case where $W$ is not a summand of $V$, see \cite[Example 2.6]{GG:large}, which gives an example with $G = \Ga$ where $W$ is a subspace of $V$ and $G_w = 1$, yet $G_v \ne 1$.  (See also Example \ref{char3} for a different but related phenomenon.)

We do know, by an easy argument using upper semicontinuity of dimension, that
\begin{equation} \label{usc.ineq}
\dim G_w \ge \dim G_v \quad \text{and} \quad \dim \g_w \ge \dim \g_v
\end{equation}
for generic $w \in W$ and $v \in V$ when $W$ is a section of $V$, see for example \cite[Example 2.2]{GG:large}.

Recall that a representation $V$ of $G$ is \emph{generically free} if $G_v = 1$ for generic $v \in V$.  We have: \emph{If $\car k = 0$ and a representation $V$ of $G$ has a section $W$ that is generically free, then $V$ is generically free} because $W$ is a summand of $V$.  Theorem \ref{sections} below provides a version of this in prime characteristic.

\subsection*{Separably free actions}
Note that if $V$ is generically free, the kernel $N$ of the action is necessarily trivial.  To accommodate the possibility that $N \ne 1$, we make the following definition.

\begin{defn}
A representation $V$ of $G$ is \emph{separably free} if the kernel $N := \ker  [ G \to \GL(V) ]$ of the action is \'etale and $(G/N)_v = 1$ for generic $v \in V$.
\end{defn}

\begin{thm} \label{sections} 
Let $V$ be a representation of a simple algebraic group $G$.  If $V$ has an irreducible section that is separably (resp.~generically) free as a representation of $G$, then $V$ is separably (resp.~generically) free.
\end{thm}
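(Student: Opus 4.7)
The plan is to first use the semicontinuity inequality \eqref{usc.ineq} to force $G_v$ to be a finite \'etale group scheme containing $N_V := \ker[G \to \GL(V)]$, and then to leverage the simplicity of $G$ through a Jordan-H\"older reduction plus an affine-action analysis on the extension structure of $V$ to show $G_v = N_V$ generically.

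Setting $N_W := \ker[G \to \GL(W)]$, the first thing I would observe is that $N_V \subseteq N_W$, because $N_V$ acts trivially on $V$ and hence on the subquotient $W$. In the separably free case $N_W$ is \'etale by hypothesis, so $N_V$ inherits \'etaleness; in the generically free case $N_W = 1$, so $N_V = 1$. Applying \eqref{usc.ineq} to generic $v \in V$ and $w \in W$ then gives
\[
\dim G_v \leq \dim G_w = 0 \quad \text{and} \quad \dim \g_v \leq \dim \g_w = 0,
\]
using that $G_w = N_W$ is \'etale. Hence $G_v$ is a finite \'etale group scheme containing $N_V$, which already supplies the \'etale-kernel part of ``separably free.''

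By Jordan-H\"older, $W$ is a composition factor of $V$, so $W \cong V_2/V_1$ for $G$-invariant subspaces $V_1 \subseteq V_2 \subseteq V$. Replacing $V$ by $V/V_1$ using $G_v \subseteq G_{\bar v}$ for the image $\bar v \in V/V_1$—plus a finite cleanup at the end, since $V^g \subsetneq V$ for every $g \in N_{V/V_1}(k) \setminus N_V(k)$ and $N_{V/V_1}$ is already known to be finite \'etale—I would reduce to the case where $W$ is a submodule of $V$. Replacing $G$ by $G/N_V$, I may further assume $G$ acts faithfully on $V$; in the generically free case, $G$ then acts faithfully on the submodule $W$ as well.

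Now choose a linear (non-equivariant) splitting $V = W \oplus Y$ with $Y := V/W$. The $G$-action takes the form $g \cdot (w, y) = (gw + c(g, y), gy)$ for a cocycle $c \colon G \times Y \to W$, so the stabilizer of $v = (w, y)$ equals $\{ g \in G_y : (g-1)w = -c(g, y) \}$. For each $g \neq 1$, the locus $\{ w \in W : (g-1)w = -c(g, y) \}$ is either empty or a proper affine subspace of $W$, since $(g-1)|_W \neq 0$ by faithfulness. A dimension estimate on the incidence scheme
\[
B_y = \{ (g, w) \in (G_y \setminus \{1\}) \times W : (g-1)w = -c(g, y) \},
\]
combined with the already-established finiteness of $G_v$, should show that $B_y$ projects to a proper subvariety of $W$, so $G_v = 1$ for generic $(w, y)$. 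The main obstacle is controlling $\dim B_y$ uniformly in $y$, especially when $G_y$ is positive-dimensional: a naive fibrewise bound does not suffice, and one must use the simplicity of $G$ (via the finiteness of $G_v$ coming from the first paragraph) to rule out pathological behavior. This is precisely where simplicity is essential and where the naive analogue for $G = \Ga$ breaks down, as in \cite[Example 2.6]{GG:large}.
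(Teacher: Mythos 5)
Your first paragraph is sound and matches what the paper does for the infinitesimal part: semicontinuity \eqref{usc.ineq} gives $\g_v \subseteq \g_w = 0$, so $G_v$ is \'etale, and $\dim G_v = 0$. Your Jordan--H\"older reduction to the case of a submodule, with the finite cleanup via $V^g \subsetneq V$ for $g \in N_{V/V_1}(k) \setminus N_V(k)$, is also legitimate. The genuine gap is in the remaining (and central) step: showing that the finite \'etale group $G_v(k)$ is actually central (hence equal to the kernel). Your incidence-scheme argument on $B_y = \{(g,w) : (g-1)w = -c(g,y)\}$ is left unfinished by your own admission, and the finiteness of $G_v$ established in the first paragraph cannot rescue it: finiteness says nothing about whether a fixed noncentral $g$ of finite order fixes a generic vector, and controlling $\dim B_y$ when $G_y$ is positive-dimensional is exactly the point at which the $\Ga$-counterexample \cite[Example 2.6]{GG:large} shows a purely formal argument must fail. ``Simplicity rules out pathological behavior'' is an aspiration here, not a proof.

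The missing idea is the fixed-point inequality mechanism. The paper invokes Lemma \ref{GL.cor4} (\cite[Cor.~4]{GurLawther1}): since the irreducible section $W$ is separably/generically free, one gets $\dim W^g + \dim g^G < \dim W$ for every $g \in G(k)$ of prime order modulo the center (and unipotent $g$ when $\car k = 0$). This is a nontrivial, classification-based converse --- it is precisely where simplicity of $G$ and irreducibility of $W$ enter, and it is false at the Lie algebra level. Because $W$ is a section of $V$, the codimension of the fixed space can only grow, so $\dim V^g + \dim g^G < \dim V$ for the same $g$; the elementary Lemma \ref{GL.cor4.easy} then forces $G_v(k)$ to be central for generic $v$. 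One further step you omit: to pin $G_v(k)$ down to exactly $Z(k) = \ker[G \to \GL(V)](k)$ rather than merely a central subgroup, the paper first decomposes $V$ into $Z(G)$-homogeneous components and reduces to the component containing $W$. Without some substitute for Lemma \ref{GL.cor4}, your argument does not close.
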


Before proving the theorem, we note some lemmas.  The following is well known, and a proof is contained in \cite[\S10]{GG:simple}, compare \cite[Lemma 2.6(i)]{GG:spin}.

\begin{lem} \label{GL.cor4.easy}
Let $V$ be a representation of a semisimple algebraic group $G$ over an algebraically closed field $k$.  If, for every $g \in G(k)$ that is (i) noncentral semisimple and whose image in $\GL(V)$ has prime order or (ii) unipotent, we have $\dim V^g + \dim g^G < \dim V$, then for generic $v \in V$, $G_v(k)$ is central in $G(k)$.
\end{lem}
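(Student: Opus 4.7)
The plan is to show that the set
\[
U := \{v \in V : G_v(k) \not\subseteq Z(G(k))\}
\]
has dimension strictly less than $\dim V$, so that its complement is a dense open subset on which the conclusion holds. Writing $U = \bigcup_{g \in G(k) \setminus Z(G(k))} V^g$, the strategy is to reduce this indexing set to finitely many conjugacy classes of type (i) or (ii), on which the hypothesis directly gives $\dim(G \cdot V^h) \le \dim h^G + \dim V^h < \dim V$.

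First I would carry out the reduction via the Jordan decomposition. Given $v \in U$, pick any noncentral $g \in G_v(k)$ and write $g = g_s g_u$; both factors lie in $G_v(k)$. If $g_u \ne 1$, then $g_u$ is a nontrivial unipotent in $G_v(k)$, and it is noncentral because the center $Z(G)(k)$ of a semisimple group is finite of order prime to $\car k$ and so contains no nontrivial unipotent; this places $g_u$ in class (ii). Otherwise $g = g_s$ is noncentral semisimple, and the closure $H := \overline{\langle g_s \rangle}$ is a diagonalizable subgroup of $G_v$ with $H(k) \not\subseteq Z(G)(k)$. Because the prime-to-$\car k$ torsion in a diagonalizable group is Zariski dense, $H$ contains an element $h$ of prime order $p \ne \car k$ that is noncentral in $G$. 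Applying the hypothesis to unipotents shows $\ker\rho \subseteq Z(G)$ (a normal closed subgroup of a semisimple group with no nontrivial unipotents is central), so $h \notin \ker\rho$ and $\rho(h)$ has exact prime order $p$, placing $h$ in class (i).

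Second I would bound the set of conjugacy classes one actually needs. Unipotent conjugacy classes in a semisimple group over $k$ are finite in number, so class (ii) contributes only finitely many. For class (i), fix a maximal torus $T$: every semisimple element is $G$-conjugate into $T$, and the pair $(V^h, C_G(h))$ for $h \in T$ depends only on which of the finitely many characters (weights of $V$, roots of $G$) evaluate to $1$ on $h$. This stratifies $T$ into finitely many locally closed pieces on which both $V^h$ and $\dim h^G$ are constant. On each stratum not contained in $Z(G)$, the density argument above produces a single prime-order representative $h$ with $\rho(h)$ of prime order, and the whole stratum's contribution to $U$ is absorbed by $G \cdot V^h$ since $V^{h'} = V^h$ for any $h'$ in the stratum.

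Combining these, $U$ is covered by finitely many sets of the form $G \cdot V^h$ with $h$ in class (i) or (ii), each of dimension $< \dim V$ by hypothesis; hence $\dim U < \dim V$ as required. I expect the main obstacle to be the semisimple reduction in Step 1, specifically producing inside $H(k)$ an element that is simultaneously noncentral in $G$ and has $\rho$-image of exact prime order: this requires the interaction between the containment $\ker\rho \subseteq Z(G)$ (which itself must be extracted from the unipotent case of the hypothesis) and the torsion density in diagonalizable groups, while keeping track of which lifts modulo $H \cap \ker\rho$ remain noncentral.
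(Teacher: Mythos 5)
The paper does not actually prove this lemma itself --- it cites \cite[\S10]{GG:simple} and \cite[Lemma 2.6(i)]{GG:spin} --- and your overall strategy (cover the locus of $v$ with noncentral $G_v(k)$ by finitely many sets $G\cdot V^h$ with $h$ of type (i) or (ii), each of dimension at most $\dim h^G+\dim V^h<\dim V$) is the standard one from those sources. The unipotent branch, the finiteness of unipotent classes, the deduction that $\ker\rho(k)\subseteq Z(G)(k)$, and the stratification of $T$ by vanishing patterns of weights and roots are all sound.

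The gap is exactly where you flagged it, and it is genuine: the assertion that $H=\overline{\langle g_s\rangle}$ with $H(k)\not\subseteq Z(G)(k)$ contains a \emph{noncentral} element of prime order is false, and density of prime-to-$\car k$ torsion does not rescue it --- when $g_s$ has finite order, $H$ is finite, its torsion is all of $H$, and its few prime-order elements can all be central. Concretely, for $g_s=\mathrm{diag}(i,-i)\in\SL_2$ one has $H\cong\Z/4$, whose unique prime-order element is $-I\in Z(\SL_2)$, while the noncentral elements have order $4$ and (in, say, the natural representation) $\rho$-image of order $4$; the corresponding stratum of $T$ therefore has no type-(i) representative, so your covering misses the points it is supposed to absorb. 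The repair needs an extra branch: if $\rho(g_s)$ has infinite order, then $H^\circ$ is a positive-dimensional torus and for a large prime $q$ one finds $h\in H^\circ[q]$ outside the finite set $Z(G)(k)$ and outside $\ker\rho$, so $h$ is type (i) and $V^{g_s}=V^H\subseteq V^h$; if $\rho(g_s)$ has finite order $n$ and every power $g_s^{m}$ with $\rho(g_s^m)$ of prime order is central, then one checks $n$ is not squarefree and some such power $z$ is a \emph{central} element with $\rho(z)\ne 1$, whence $v\in V^{g_s}\subseteq V^{z}$, a proper $G$-submodule of $V$; as $Z(G)(k)$ is finite, these contribute only finitely many proper subspaces to the cover. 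Without this second branch the argument, as written, does not cover all of $U$.
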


As a consequence of previous work on irreducible representations of simple groups, we have the following converse:

\begin{lem}[Corollary 7 in \cite{GurLawther}] \label{GL.cor4}
Let $V$ be a faithful and irreducible representation of a simple algebraic group $G$ over an algebraically closed field $k$.  The stabilizer $G_v(k) = 1$ for generic $v \in V$ if and only if 
$\dim V^g + \dim g^G < \dim V$ for all $g \in G(k)$ of prime order (modulo
the center) and, if $\car k = 0$, all unipotent $g$.
\end{lem}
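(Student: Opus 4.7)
The plan is to derive both implications from the standard dimension count for the $G$-saturation of a fixed-point subspace. For any $g \in G(k)$, write $V^g := \{v \in V : gv = v\}$; then $G \cdot V^g = \bigcup_{h \in G(k)} V^{hgh^{-1}}$ is $G$-invariant, and the routine fibre analysis of the action map $G \times V^g \to V$, $(h,v) \mapsto hv$, gives
\[
\dim (G \cdot V^g) \;=\; \dim g^G + \dim V^g .
\]
Since $V$ is faithful and irreducible, $Z(G)$ acts by scalars (Schur), so $G_v(k) \cap Z(G) = 1$ for every nonzero $v$; every nontrivial element of a stabilizer is therefore non-central.

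The forward direction is then immediate: if $G_v(k) = 1$ on a dense open of $V$, every $G$-invariant closed set of the form $G \cdot V^g$ with $g \ne 1$ is proper in $V$, forcing $\dim g^G + \dim V^g < \dim V$. In particular this holds for any $g$ of prime order modulo the centre and, in characteristic $0$, for any unipotent $g$.

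For the converse, I would set $B := \{v \in V : G_v(k) \ne 1\}$ and show that $B$ is contained in a finite union of proper closed subvarieties of the form $G \cdot V^g$. Given $1 \ne g \in G_v(k)$, the Jordan components $g_s, g_u$ lie in the closed subscheme $G_v$. In characteristic $p > 0$, if $g_u \ne 1$ then a $p$-power of $g_u$ is non-central of prime order $p$; in characteristic $0$, a nontrivial $g_u$ is itself a nontrivial unipotent. If $g_s$ has finite order, a prime-power exponent yields a non-central element of prime order. And if $g_s$ has infinite order, the identity component of $\overline{\langle g_s \rangle}$ is a nontrivial subtorus $T_0 \subseteq G_v$; since $Z(G)$ is finite (as $G$ is simple), $T_0[\ell]$ for any prime $\ell \ne \car k$ not dividing $|Z(G)|$ contains a non-central element of prime order $\ell$. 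Thus in every case $G_v(k)$ meets the set of test elements (non-central prime-order, plus unipotent in characteristic $0$), and $B$ is covered by the $G \cdot V^g$ for $g$ ranging over this set.

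Finiteness of the resulting union is the key technical step. For semisimple $g$ of prime order inside a fixed maximal torus $T$, $V^g = \bigoplus_{\lambda(g) = 1} V_\lambda$ is determined by a subset of the finite set of $T$-weights of $V$, yielding only finitely many $V^g$ up to $G$-conjugacy; and the unipotent conjugacy classes of a simple algebraic group are finite in number. Each of the resulting $G \cdot V^g$ has dimension $< \dim V$ by hypothesis, so $B$ lies in a proper closed subset of $V$, and generically $G_v(k) = 1$. The main obstacle is this extraction and finiteness step, above all producing a test element from an infinite-order semisimple stabilizer element via the subtorus argument (which leans on $Z(G)$ being finite); the remaining ingredients, Schur's lemma and the fibre-dimension formula, are entirely formal.
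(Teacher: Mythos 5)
The paper does not actually prove this lemma: it is quoted as Corollary 4 of \cite{GurLawther1}, and the surrounding text (``As a consequence of previous work on irreducible representations of simple groups, we have the following converse'') makes clear that the substantive direction is extracted from the case-by-case determination of generic stabilizers in that reference, not from a formal argument. Your treatment of the ``converse'' direction (the inequalities imply generic triviality of $G_v(k)$) is essentially the standard covering argument and is sound: it is Lemma \ref{GL.cor4.easy} combined with the observation that central elements act as nonidentity scalars, and your extraction of a test element from an arbitrary nontrivial stabilizer element, together with the finiteness of the collection of subspaces $V^g$ up to conjugacy, is correct.

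The gap is in the direction you call ``immediate.'' The fibre analysis of $G\times V^g\to V$ gives only the inequality $\dim\overline{G\cdot V^g}\le \dim g^G+\dim V^g$; the defect is $\dim\bigl(g^G\cap G_w(k)\bigr)$ for $w$ generic in $G\cdot V^g$, which need not be zero. Already for $G=\SL_2$ acting on $V=k^2$ and $g$ a nontrivial unipotent element one has $\dim g^G+\dim V^g=3>2=\dim (G\cdot V^g)$. So knowing that $G\cdot V^g$ is a proper subvariety (which is all that generic triviality gives you) does not bound $\dim g^G+\dim V^g$ above by $\dim V-1$. That no formal count can close this gap is demonstrated by the paper itself in the sentence following the lemma: for $\g=\sl(W)$ acting on $V=W\otimes W^{[p]}$ one has $\g_v=0$ for generic $v$, yet $\dim x^G+\dim V^x=\dim V+\dim W-2\ge\dim V$ for a root element $x$. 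Your dimension argument applies verbatim in that setting and would ``prove'' the false Lie-algebra analogue. The forward implication for groups is a genuine theorem whose known proof runs through the classification in \cite{GurLawther1} (in each case where some test element $g$ violates the inequality, one exhibits a conjugate of $g$ inside the stabilizer of a generic vector); it is not recoverable from the fibre-dimension formula alone.
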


The analogue of Lemma \ref{GL.cor4} for Lie algebras is false.  For example, when $\car k = p \ne 0$, for any vector space $W$, the irreducible and faithful representation $V = W \otimes W^{[p]}$ of $G = \SL(W)$ has $\g_v = 0$ for generic $v \in V$, yet $\dim x^G + \dim V^x = \dim V + \dim W - 2$ for $x$ a root element, see \cite[\S10]{GG:large}.

\begin{proof}[Proof of Theorem \ref{sections}]
Suppose first that the irreducible section $W$ of $V$ is separably free.  Put $N := \ker [G \to \GL(W)]$ and $Z := \ker [G \to \GL(V)]$, so $Z \subseteq N$.  By hypothesis, $N$ is \'etale, so $Z$ is also.    In particular, both $N$ and $Z$ are central in $G$.

If any summand of $V$ is separably free, then $V$ is separably free.  Writing $V$ 
as a direct sum of the $Z(G)$-homogeneous components we may assume that $V$ is $Z(G)$-homogeneous.

By Lemma \ref{GL.cor4}, the inequality $\dim W^g + \dim g^G < \dim W$ holds for the relevant $g \in G(k)$.  Then it is easy to see that $\dim V^g + \dim g^G < \dim V$ for those same $g$.  The easier lemma, Lemma \ref{GL.cor4.easy}, now shows that $G_v(k)$ is a central subgroup for generic $v \in V$.  As $V$ is $Z(G)$-homogeneous, $G_v(k) = Z(k)$.

Because $\Lie(N) = \Lie(Z) = 0$, the natural maps $\Lie(G) \to \Lie(G/N)$ and $\Lie(G) \to \Lie(G/Z)$ are isomorphisms, and we obtain isomorphisms of generic stabilizers $\Lie(G/N)_w \cong \g_w$ and $\Lie(G/Z)_v \cong \g_v$.  As $\dim \g_v \le \dim \g_w = 0$, we find that $G_v$ is \'etale, so $G_v = Z$, i.e., $V$ is separably free.

In case $W$ is generically free, then (1) $N = 1$ so $Z = 1$ and (2) $V$ is separably free by the above.  So $V$ is generically free.
\end{proof}

\begin{conjecture}
If $G$ is reductive and $W$ is a generically
free section of $V$, then $V$ is generically free. 
\end{conjecture}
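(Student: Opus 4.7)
The plan is to reduce the conjecture to a question about affine actions on the fibers of the quotient $V\to V/V'$ and then to use the reductivity of $G$ to control the resulting cocycle obstruction.

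First I would reduce to the subrepresentation case: given a section $W=V_2/V_1$ that is generically free, any lift $v_2\in V_2$ of a generic $w\in W$ satisfies $G_{v_2}\subseteq G_w=1$, so $V_2$ itself is generically free. The conjecture then becomes: \emph{if $V'\subseteq V$ is a subrepresentation that is generically free, then so is $V$}. Generic freeness of $V'$ forces $\ker[G\to\GL(V')]=1$, hence $V$ is faithful; moreover \eqref{usc.ineq} yields $\dim G_v=\dim \g_v=0$ for generic $v\in V$, so $G_v$ is a finite \'etale group scheme, and it remains to show $G_v(k)=1$ generically.

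Next I would analyze the fiber geometry. Choose a (non-equivariant) vector-space complement $V''$ so that $V=V'\oplus V''$, and let $\pi\!:V\to V/V'\cong V''$. For a generic $\bar v\in V/V'$, set $H:=G_{\bar v}$ and fix $v_0\in \pi^{-1}(\bar v)$; then for $v=v_0+v'$ one computes
\[
G_v \;=\; H_v \;=\; \{\,h\in H : (h-1)v' = c(h)\,\}, \qquad c(h):=(1-h)v_0\in V',
\]
where $c\!:H\to V'$ is a $1$-cocycle. Because $H\subseteq G$ and $G$ acts generically freely on $V'$, the linear action of $H$ on $V'$ is generically free on both points and Lie algebra. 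If $c$ is a coboundary, say $c(h)=(h-1)w_0$, then translating $v'$ by $w_0$ identifies the affine $H$-action on the fiber with its linear action on $V'$, and generic freeness of $V$ follows at once. The task is therefore to show that either $c$ is forced to be a coboundary, or else the bad locus
\[
Z \;:=\; \{\,(h,v')\in (H\setminus\{1\})\times V' : (h-1)v'=c(h)\,\}
\]
does not project dominantly onto $V'$.

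The hard part will be the case $\dim H>0$: the naive dimension count gives only $\dim Z\le \dim H+\dim V'-1$, which can reach or exceed $\dim V'$. Pushing past it requires a finer argument showing that for generic $h\in H$ the fiber $Z_h$ is empty (because $c(h)\notin \im(h-1)\vert_{V'}$), which is essentially a cohomological assertion. This is where the reductivity of $G$ must enter: in characteristic zero, linear reductivity of $G$ splits $V=V'\oplus V''$ as $G$-modules and makes $c$ a coboundary outright; in positive characteristic one would like to replace this with geometric reductivity or a slice-theorem analogue. The chief difficulty is that $H=G_{\bar v}$ need not itself be reductive, so standard cohomological tools applied to $H^1(H,V')$ are unavailable, and the argument must extract the needed triviality of $c$ from the reductivity of the ambient $G$ alone.
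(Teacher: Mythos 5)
The statement you are addressing is stated in the paper as a \emph{conjecture}: the authors do not prove it, and the only case they establish is Theorem \ref{sections}, where $G$ is simple and the section is irreducible. That proof is of a completely different nature from yours: it controls $G_v(k)$ via the classification criterion of Lemma \ref{GL.cor4} (the inequality $\dim V^g + \dim g^G < \dim V$, transported from $W$ to $V$) together with Lemma \ref{GL.cor4.easy}, and controls $\g_v$ via the semicontinuity inequality \eqref{usc.ineq}; no cocycle analysis appears. So there is no proof in the paper against which your argument can be matched, and a correct complete argument here would be a new result.

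As for the proposal itself: the reduction to the subrepresentation case is correct (the stabilizer of a point is contained in the stabilizer of its image under the equivariant surjection $V_2 \to W$, and a generic lift of a generic $w$ is generic in $V_2$), and identifying the obstruction with the class of $c(h) = (1-h)v_0$ in $H^1(H, V')$ --- equivalently with the existence of an $H$-fixed point in the fiber $\pi^{-1}(\bar v)$ --- is a sensible framing. But the argument is incomplete precisely at the step carrying all the content, as you concede: when $\dim H > 0$ the bound $\dim Z \le \dim H + \dim V' - 1$ does not prevent $Z \to V'$ from dominating, and $H = G_{\bar v}$ need not be reductive, so neither linear reductivity (unavailable in prime characteristic) nor cohomological vanishing for $H$ is at your disposal. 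Note that Example 2.6 of \cite{GG:large}, cited in \S\ref{section.sec}, exhibits $G = \Ga$ with a generically free subrepresentation $W \subseteq V$ and $V$ not generically free; in that example your cocycle is genuinely nontrivial and the bad locus does dominate, so any completion must use reductivity of the ambient $G$ in an essential rather than formal way, and you have not identified the mechanism. What you have is a reasonable plan of attack on an open problem, not a proof.
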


\section{Representations with the same invariants} \label{same.sec}

We give a new proof of one of the main results of \cite{GG:simple}, Theorem \ref{same.thm} below, which characterizes inclusions where the subgroup and overgroup have the same invariants.  The original proof relied on results from \cite{seitzmem}, whereas the following, quite different proof avoids that reference and instead uses the information about the dimension of the generic stabilizer.  This approach leaves very few cases to examine.  In this final section of the paper, we stop viewing algebraic groups as affine schemes and instead view them in a naive way as the group of $k$-points under the Zariski topology, as is done, for example in \cite{Hum:LAG}.

\begin{thm}[Theorem 13.1 in \cite{GG:simple}]  \label{same.thm}
Suppose that $G < H \le \SL(V)$ with $G$ a simple algebraic group over an algebraically closed field $k$ acting irreducibly on $V$, and $H$ connected and closed in $\SL(V)$.  If $\dim k[V]^G = \dim k[V]^H$, then $k[V]^G = k[V]^H$ and one of the following holds, up to a Frobenius twist and/or a twist by a graph automorphism:
\begin{enumerate}
\item \label{same.SL} $H = \SL(V)$ and $k[V]^G = k$ (i.e., $\dim k[V]^G = 0$).
\item \label{same.Sp} $H = \Sp(V)$, $\car k = 2$, $G = G_2$, $\dim V = 6$, and $k[V]^G = k$.
\item \label{same.SO} $H = \SO(V)$, $k[V]^G = k[q]$ for a homogeneous quadratic form $q$ (in particular, $\dim k[V]^G = 1$).
\item $(G, H, V, \car k)$ is in Table \ref{same.table}.  
\end{enumerate}
\end{thm}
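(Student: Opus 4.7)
The plan is to convert the hypothesis $\dim k[V]^G = \dim k[V]^H$ into a tight numerical identity, and then run through Theorem \ref{summ.thm} case by case. First, because $H$ is connected and closed in $\SL(V)$ and acts faithfully and irreducibly, it is reductive. The dimension formula for invariants of reductive groups (\cite[Th.~2]{Sesh:GR} combined with orbit-dimension counting) gives
\[
\dim k[V]^G = \dim V - \dim G + \dim G_v, \qquad \dim k[V]^H = \dim V - \dim H + \dim H_v
\]
for generic $v \in V$, so the hypothesis yields the key identity
\begin{equation} \label{plan.key.eq}
\dim H - \dim G = \dim H_v - \dim G_v \ge 0,
\end{equation}
with the inequality immediate from $G_v \subseteq H_v$.

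Next I would upgrade \eqref{plan.key.eq} to the equality of rings $k[V]^G = k[V]^H$. Since $Gv \subseteq Hv$ have the same dimension for generic $v$, $Gv$ is open and dense in the irreducible orbit $Hv$; hence any $G$-invariant rational function is constant on a dense open subset of each generic $H$-orbit and is therefore $H$-invariant. Rosenlicht's theorem then gives $\operatorname{Frac} k[V]^G = \operatorname{Frac} k[V]^H$, and since both rings are integrally closed in this common field with $k[V]^H \subseteq k[V]^G$, they coincide.

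The remaining work is to list the triples $(G, H, V)$ compatible with \eqref{plan.key.eq}, which I would organize according to Theorem \ref{summ.thm}. If $\dim V > \dim G$, then $G_v$ is finite, so \eqref{plan.key.eq} forces $\dim H_v = \dim H - \dim G$; running through Table \ref{meta.big} together with the representations on which $G$ acts generically freely, one reads off cases \eqref{same.SL}--\eqref{same.SO} (where $H$ is the full group preserving a form on $V$) and the sporadic inclusions listed in Table \ref{same.table}. If $\dim V = \dim G$, then $V = L(\hst)$ is the adjoint representation of $G$ and the s.g.p.\ has identity component a maximal torus of $G$ (Theorem \ref{summ.thm}\eqref{summ.eq}); here Lemma \ref{adj.sub} forces any simple $H$ with $V$ its irreducible adjoint-type representation to have $\car k$ special for $H$, producing the relevant entries in Table \ref{same.table}. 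If $\dim V < \dim G$, then $(G, V)$ lies in the short list of Table \ref{meta.small}, and each row can be inspected directly.

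The main obstacle will be producing the case-by-case enumeration of overgroups $H$ without invoking the deep classification of maximal irreducible subgroups of classical groups (the route taken in the original proof in \cite{GG:simple} via \cite{seitzmem}). The key technical input that makes the shortcut possible is \eqref{plan.key.eq}: knowing $\dim G_v$ from Tables \ref{meta.small} and \ref{meta.big} sharply constrains $\dim H_v$, and combined with $G_v \subseteq H_v$ and the requirement that $V$ remain irreducible for $H$, this leaves only the explicitly listed cases to verify by hand.
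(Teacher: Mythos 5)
Your opening identity $\dim H - \dim G = \dim H_v - \dim G_v \ge 0$ is correct and is equivalent to the density lemma ($GH_v$ dense in $H$ for generic $v$) with which the paper's proof begins, but the way you then organize the classification does not work. The paper's pivotal move, which you never make, is to read this identity in the direction of $H$: since $\dim H > \dim G$, a generic $v$ has $\dim H_v > \dim G_v \ge 0$, so $H_v$ is \emph{not} finite, and Theorem \ref{summ.thm} applied to $H$ forces $\dim V \le \dim H$, i.e.\ $(H, \car k, V)$ itself lies in Table \ref{meta.small} or $V$ is the adjoint-type module $L(\hst)$ of $H$ --- after first disposing of non-simple $H$ by a separate tensor-decomposition argument, which is where the row $\SL_n < \SL_n \ot \SL_n$ of Table \ref{same.table} arises and which your sketch omits entirely. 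It is this finite list of pairs $(H,V)$, combined with Theorem \ref{kVG} applied to $G$ once one knows $\dim k[V]^G = \dim k[V]^H \le 2$, that replaces Seitz's classification. Your plan instead enumerates by $(G,V)$: ``running through Table \ref{meta.big} together with the representations on which $G$ acts generically freely'' is not executable, because the generically free representations form an unbounded family and, for a fixed $(G,V)$, there is no way to list the closed connected overgroups $H \le \SL(V)$ short of invoking exactly the maximal-subgroup machinery you are trying to avoid. (Your use of Lemma \ref{adj.sub} also has $G$ and $H$ interchanged: in the paper it is applied when $V$ is the adjoint-type module of the \emph{overgroup} $H$, where the existence of the proper irreducible subgroup $G$ forces $\car k$ to be special for $H$, contradicting faithfulness and ruling that case out.)

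The upgrade from equal dimensions to $k[V]^G = k[V]^H$ is also defective as written. Two integrally closed subrings of a common fraction field need not coincide ($k[x] \subsetneq k[x, x^{-1}]$ is such a pair), so ``both rings are integrally closed in this common field, hence they coincide'' is not a valid inference; one would need $k[V]^G$ to be integral or finite over $k[V]^H$, which is not automatic, and in any case $\operatorname{Frac} k[V]^G$ can a priori be smaller than the field $k(V)^G$ that Rosenlicht's theorem controls. This particular gap is reparable --- a $G$-invariant \emph{regular} function is constant on the closure of each generic $G$-orbit, hence on the $H$-orbit it is dense in, hence is $H$-invariant by continuity --- but the paper itself does not argue this way: it cites the case-by-case verification of the ring equality from \cite{GG:simple} and devotes the new proof entirely to showing that equality of \emph{dimensions} occurs only in the listed cases.
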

The possibilities for $G$ in \eqref{same.SL} and \eqref{same.SO} can be extracted from Tables \ref{meta.small} and \ref{meta.big}; see Tables C and D in \cite{GG:simple} for an explicit list.

\begin{rmk}
The statement of Theorem \ref{same.thm} is slightly different from the one in \cite{GG:simple}.  The earlier version erroneously omitted the hypothesis that $H$ is connected.  Of course, the dimension of the ring of invariants only depends
on the connected component of the identity but certainly the actual ring of invariants can change.   

The earlier version also omitted the inclusion $G_2 < \PSp_6$ from Table \ref{same.table}.  For this inclusion, $G_2$ embeds in $\PSp_6$ when $\car k = 2$ via the natural irreducible representation of $G_2$.  The 14-dimensional representation $L(\omega_2)$ of $\PSp_6$ restricts to the adjoint representation of $G_2$.  To see this, note that the representation $\wedge^2 k^6$ of $\PSp_6$ is $k \oplus L(\omega_2)$,  and compare the restrictions of this and the adjoint module to an $A_2$ subgroup of $G_2$.  The fact that the ring of invariants on this representation is polynomial with generators of degrees 2 and 3 as in Table \ref{same.table} is \cite[Example 8.5]{GG:simple} for $\PSp_6$ and Example \ref{kg2} for $G_2$.
The equality $k[V]^G = k[V]^H$ in the other cases was proved in \cite{GG:simple}. 
\end{rmk}


\begin{table}[hbt]
\begin{center}
\begin{tabular}{@{}ccccc@{}} \toprule
$G$&$H$&$\dim V$&$\car k$&degrees \\ \midrule
$\PGL_3$&$G_2$&7&3&2 \\
$G_2$&$\PSp_6$&14&2&2, 3 \\
$\Spin_{11}$&$\HSpin_{12}$&32&all&$\begin{cases} \scriptstyle{4}&\scriptstyle{\text{if $\car k \ne 2$}}\\ \scriptstyle{2} & \scriptstyle{\text{if $\car k = 2$}}\end{cases}$ \\
$\SO_{2n}$ ($n \ge 3$)&$\PSp_{2n}$&$\begin{cases} \scriptstyle{2n^2 - n - 2}& \scriptstyle{\text{if $n$ even}}\\ \scriptstyle{2n^2 -n-1} & \scriptstyle{\text{if $n$ odd}} \end{cases}$&2&$\begin{cases}?\\2, 3, \ldots, n\end{cases}$ \\
$\SO_8$ or $\Sp_8$&$F_4$&26&2&2, 3\\
$\SL_n$ & $\SL_n \otimes \SL_n$ & $n^2$ & $ \ne 0$ & $n$ \\ \bottomrule
\end{tabular}
\medskip
\caption{Representations referred to in Theorem \ref{same.thm}, copied from Table E in \cite{GG:simple}.  The representations in the last row are denoted $L(\omega_1 + p^e  \omega_\ell)$ and $L(\omega_1 + p^e \omega_1)$ in Table \ref{meta.big}.  Note that since this section views algebraic groups in the naive sense, when $\car k = 2$ we have natural identifications $\SO_n = P\Omega_n$ and $\Sp_{2n} = \PSp_{2n}$.} \label{same.table}
\end{center}
\end{table}

We use the following.

\begin{lem} Let $G < H \le \SL(V)$ with $G$ a simple algebraic group
acting irreducibly on $V$ and $H$ a closed connected subgroup.
Then $\dim k[V]^H = \dim k[V]^G$ if and only $GH_v$ is dense in $H$
for generic $v \in V$.
\end{lem}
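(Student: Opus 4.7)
The plan is to reduce both sides of the equivalence to a single numerical equality involving the dimensions of the generic stabilizers in $G$ and $H$. First I would invoke the standard transcendence-degree formula
\[
\dim k[V]^G = \dim V - \dim Gv \eand \dim k[V]^H = \dim V - \dim Hv
\]
for generic $v \in V$, which the paper uses freely (e.g.~in Theorem~\ref{summ.thm}). Hence $\dim k[V]^G = \dim k[V]^H$ if and only if a generic $G$-orbit on $V$ and a generic $H$-orbit on $V$ have the same dimension, equivalently $\dim G - \dim G_v = \dim H - \dim H_v$.

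Next, I would analyze the multiplication morphism $\mu \colon G \times H_v \to H$, $(g,h) \mapsto gh$, whose image is the constructible set $GH_v$. If $g_1 h_1 = g_2 h_2$ with $g_i \in G$, $h_i \in H_v$, then $x := g_2^{-1}g_1 = h_2 h_1^{-1}$ lies in $G \cap H_v = G_v$, and conversely every $x \in G_v$ arises this way. Thus each nonempty fiber of $\mu$ is a $G_v$-torsor, and the fiber-dimension theorem gives
\[
\dim GH_v = \dim G + \dim H_v - \dim G_v.
\]
Since $H$ is connected, hence irreducible, $GH_v$ is dense in $H$ if and only if $\dim GH_v = \dim H$, i.e., $\dim G - \dim G_v = \dim H - \dim H_v$. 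Combined with the first paragraph, this proves the equivalence.

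The only real obstacle is a mild technicality in the opening step: the identity $\dim k[V]^H = \dim V - \dim Hv$ when $H$ is not reductive, since $k[V]^H$ need not be finitely generated. This is resolved by noting that $k[V]^H$ is an integral domain, whose Krull dimension equals the transcendence degree of its fraction field. Rosenlicht's theorem on generic separation of orbits by invariant rational functions gives $\mathrm{trdeg}_k\, k(V)^H = \dim V - \dim Hv$, and a standard argument (clearing denominators against a suitable semi-invariant polynomial provided by the linear $H$-action on $V$) shows that every element of $k(V)^H$ is a ratio of elements of $k[V]^H$, so $\mathrm{Frac}(k[V]^H) = k(V)^H$ and the formula holds. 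With this in hand, the dimension count above is the entire proof.
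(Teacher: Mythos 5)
Your core argument is correct and is essentially the paper's own proof: the paper likewise reduces both sides to the equality of generic orbit dimensions and uses the same fiber computation $\dim GH_v = \dim G + \dim H_v - \dim G_v$ (the fibers of $(g,h)\mapsto gh$ being copies of $G\cap H_v = G_v$), so on this point there is nothing to add. The one place you go astray is the final paragraph. The general claim you invoke there --- that for an arbitrary closed connected $H\le\SL(V)$ every $H$-invariant rational function is a ratio of $H$-invariant polynomials, so that $\dim k[V]^H=\dim V-\dim Hv$ --- is false: for $H$ the Borel subgroup of $\SL_2$ acting on $V=k^2\oplus k^2\subset\SL_4$, one has $k[V]^H=k[x_1y_2-x_2y_1]$ of dimension $1$, while $\dim V-\dim Hv=2$ (the rational invariant $x_2/y_2$ is a ratio of semi-invariants with a nontrivial common character, and no invariant denominator clears it). Fortunately the worry is vacuous under the hypotheses of the lemma: $H$ contains $G$, which acts irreducibly on $V$, so $H$ acts irreducibly; hence its unipotent radical is trivial and, by Schur's lemma, its center consists of scalars in $\SL(V)$ and is finite, so $H$ is semisimple. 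In particular $k[V]^H$ is finitely generated, $H$ has no nontrivial characters, and the Rosenlicht-type identity $\dim k[V]^H=\dim V-\dim Hv$ holds for the standard reasons --- which is all the paper tacitly uses. Replace your last paragraph with this observation and the proof is complete.
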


\begin{proof}    If $GH_v$ is dense in $H$,  then $Gv = GH_v v$ is dense in $Hv$
for generic $v$ and so the maximal dimension of $G$ and $H$ orbits are the
same.   Since the dimension of the ring of invariants is the codimension of
a maximal dimensional orbit, the density assumption implies that
the dimensions of the ring of invariants are the same.

Conversely, if the dimensions of the ring of invariants are the same, then
there exists a nonempty subset $\mathcal{O}$ of $V$ such that
$\dim Gv = \dim Hv$ for all $v \in \mathcal{O}$ and since $Hv$ is an
irreducible variety $Gv$ is dense in $Hv$.  Thus, $\dim GH_v = \dim G
+ \dim H_v - \dim G_v
= \dim H_v + \dim H - \dim H_v = \dim H$ and so $GH_v$ is dense in $H$.
\end{proof}

\begin{lem} \label{l:codimsp}
Let $G$ be a simple algebraic irreducible subgroup of $H=\Sp_{2\ell}$ with $\ell \ge 3$.  If $\dim G \ge \dim H - 3 \ell$,
then $\car k=2$ and either (1) $G = \SO_{2\ell}$ or (2) $G=G_2$ and $\ell = 3$.  
\end{lem}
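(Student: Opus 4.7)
The plan is to rewrite the hypothesis as $\dim G \ge 2\ell^2 - 2\ell$ using $\dim \Sp_{2\ell} = 2\ell^2 + \ell$. Then $V = k^{2\ell}$ is a faithful irreducible representation of $G$ of dimension $2\ell$ carrying a $G$-invariant non-degenerate alternating form, and I would enumerate such pairs $(G, V)$ by the type of $G$ and test the inequality.

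For $G$ of exceptional type, $\dim G$ is a fixed integer and the inequality immediately forces $\ell$ into a short range: $\ell \le 3, 5, 6, 8, 11$ for types $G_2, F_4, E_6, E_7, E_8$ respectively, so $\dim V \le 22$. Consulting L\"ubeck's tables of low-dimensional irreducible representations, the only possibility of dimension at most $22$ for an exceptional group is a representation of $G_2$ of dimension $6$ or $7$, and only the $6$-dimensional $L(\omega_2)$ in characteristic $2$ (the quotient of the $7$-dimensional Weyl module by its fixed line) carries an alternating form; this yields case (2).

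For $G$ of classical type of rank $r$, only the natural representation is small enough to satisfy the bound. Type $A_r$ with $r \ge 2$ fails since the natural module is not self-dual while the next smallest faithful irreducible has dimension at least $\binom{r+1}{2}$, exceeding $\sqrt{2\dim G}$. Type $B_r$ has orthogonal natural module, and in characteristic $2$ its irreducible quotient factors through the very special isogeny onto $C_r = H$. Type $C_r$ gives $G = H$, which is not proper. Type $D_r$ gives the natural $2r$-dimensional module, which is $\Sp_{2r}$-invariant precisely in characteristic $2$ (where non-degenerate symmetric bilinear forms are alternating), yielding case (1). The remaining symplectic candidates---the (half-)spin representations of $B_r$ when $r \equiv 1, 2 \pmod 4$ and of $D_r$ when $r \equiv 2 \pmod 4$, or $L(\omega_2)$ of $C_r$---have dimensions of order $2^{r-1}$ or $r(2r \pm 1)$, and elementary comparisons such as $2^{r-1}(2^{r-1}-1) > r(2r+1)$ for $r \ge 3$ (and analogous polynomial inequalities) rule them out.

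The main obstacle is the thoroughness of the classical case analysis, especially for borderline cases near the threshold $\dim V \sim \sqrt{2\dim G}$; the hypothesis $\ell \ge 3$ is essential, eliminating small-rank examples (such as the $4$-dimensional spin representation realizing $\Spin_5 = \Sp_4$) that would otherwise cluster at the boundary.
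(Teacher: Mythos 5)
Your overall strategy is the same as the paper's: rewrite the hypothesis as $\dim G \ge 2\ell^2-2\ell$, observe that $G$ then has an irreducible self-dual (in fact symplectic) representation of dimension $2\ell$ bounded above by roughly $\sqrt{2\dim G}$, and compare against L\"ubeck's tables of small self-dual irreducibles. The paper does exactly this, citing the tables once and disposing of non-restricted highest weights by noting that $\la=\la_0+p\la_1$ forces $\dim L(\la)=\dim L(\la_0)\dim L(\la_1)$ with both factors self-dual, which violates the dimension bound unless $\la_1=0$.

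That said, several of your stated steps do not hold as written, even though the conclusion survives. First, the correct threshold is $\dim V \le 1+\sqrt{1+2\dim G}$ (the root of $\dim V(\dim V-2)\le 2\dim G$), not $\sqrt{2\dim G}$; this matters at the boundary. For $A_3$ you claim exclusion because $\binom{4}{2}=6>\sqrt{30}\approx 5.48$, but $6\le 1+\sqrt{31}\approx 6.57$, and indeed $\wedge^2 k^4$ for $\SL_4$ in characteristic $2$ \emph{does} give an example ($\SL_4/\mu_2\cong\SO_6<\Sp_6$); you only recover it because $A_3=D_3$. Second, your inequality $2^{r-1}(2^{r-1}-1)>r(2r+1)$ is false at $r=3$ ($12<21$); the correct comparison $2^{r-1}(2^r-2)>r(2r+1)$ does hold there ($24>21$), but only barely, and you need it: in characteristic $2$ \emph{every} self-dual irreducible of dimension $\ge 2$ is symplectic (the quadratic map $v\mapsto b(v,v)$ is semilinear with $G$-invariant kernel), so restricting to spin representations of ``symplectic type'' via the congruence conditions on $r$ is not legitimate --- you must also rule out the orthogonal-type spin modules, e.g.\ the $8$-dimensional spin module of $B_3$ (excluded by $24>21$) and the half-spin module of $D_4$ (which passes the bound but is the natural module up to triality, hence lands in case (1)). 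Third, you never address non-restricted highest weights (Frobenius-twisted tensor products); the paper's self-duality-of-factors argument, or at least the observation that $\dim L(\la_0)\dim L(\la_1)$ exceeds the bound, is needed to close this off.
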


\begin{proof}
The right side of the inequality in the statement is $2\ell^2 - 2\ell$, which is increasing for $\ell \ge 3$.  Its minimum value is 12, so $G$ cannot have type $A_1$, $A_2$, $B_2$, or $C_2$.  Otherwise, the natural representation of $H$ is an irreducible self-dual representation $L(\la)$ of $G$ of dimension between $6$ and $2\ell_G$, where $\ell_G$ is the real number $\ge 3$ such that $2\ell_G^2 - 2\ell_G = \dim G$.

In case $\la$ is restricted, the tables in \cite{luebeck} verify the claim.  Indeed, for other group types, the smallest self-dual irreducible module with restricted highest weight is already too large.

In general, we may write $\la = \la_0 + p\la_1$ with $\la_0$ restricted and $p := \car k \ne 0$.  Since $G$ acts faithfully on $L(\la)$, $\la_0 \ne 0$.  By hypothesis, $L(\la)$ is self-dual, i.e., $\la$ is fixed by $-w_0$ for $w_0$ the longest element of the Weyl group, so the same is true for $\la_0$, $\la_1$.  Then $\dim L(\la) = \dim L(\la_0) \cdot \dim L(\la_1)$, so the examination of the dimensions of self-dual representations in the preceding paragraph shows that $\la_1 = 0$, i.e., $\la$ is restricted.
\end{proof} 

\begin{proof} [Proof of Theorem \ref{same.thm}]
In view of the remarks just after the statement of the theorem, it suffices to show that
equality of the dimension of the ring of invariants only occurs in the cases listed in the conclusion.

We first consider the case that $H$ is not simple.   If not, then
$V$ is tensor decomposable for $H$ and so also $G$ whence by \cite{GurLawther},
$G_v$ is generically finite (and almost always generically trivial by \cite{GG:irred}).
In particular, $\dim k[V]^G = \dim V - \dim G$.  

We may assume that $H = H_1 \times H_2$, $G$ embeds in $H_i$
by the projection $\pi_i$ and that $V=V_1 \otimes V_2$ where $V_i$ is an irreducible
 $H_i$-module.   Let $J:=\pi_1(H) \times \pi_2(H) \cong G \times G$.  So $G \le J \le H$.
  
 Let $d_i = \dim V_i$ and assume that $d= d_1 \le d_2$.  
  Let  $v = \sum_{i=1}^d e_i \otimes f_i$ where the $e_i$ constitute a basis for $V_1$.
 Observe that $\pi_2$ restricted to $H_v$ has trivial kernel.  Indeed, if $h \in H_v$ and $\pi_2(h)=1$, 
 then $h$ fixes each $f_i$ and so fixes $v$ if and only if $he_i=e_i$ for all $i$, whence 
 $h$ is trivial on $V_1$ and so on $V$.  
 
   If $d_1 < d_2$, then we see that $J_v \cong \pi_2(H_v)$ stabilizes the span of $f_1, \ldots, f_d$
   and so $\pi_2(H_v)$ is properly contained in $H_2$.  Thus $\dim J_v < \dim G$ and 
   so $\dim k[V]^H \le \dim k[V]^J  <  \dim V - \dim H + \dim G = \dim V - \dim G = \dim k[V]^G$.   
 
 So we may assume that $d_1=d_2$ and identify $V_1 = V_2 = W$ (as vector spaces rather than $G$-modules).
   Note that in $L = \SL(W) \otimes \SL(W)$, the stabilizer in $L$ of 
 a generic vector  is a diagonal subgroup $D$, i.e., it is isomorphic to $\SL(W)$ and the projection onto either factor
 is a bijection.  
 If $\pi_2(G) \ne \SL(W)$,  then clearly $\dim \pi_2(G \cap D) < \dim G $ for generic $D$.  Thus, $\dim J_v < \dim G$
 for generic $v$ and so arguing as above, we see that $\dim k[V]^H < \dim k[V]^G$.  
 If $\pi_2(G) = \SL(W)$,  then $G \cong \SL(W)$ and $V$ is a tensor product of two $G$-modules each of which
 are Frobenius twists of the natural module or its dual.  In this case,  $\dim k[V]^G=1$ and $(G,H,V)$ are as in the last line of Table \ref{same.table}.
 
\medskip

So now assume that $H$ is simple. If $\dim H_v=0$ for some $v \in V$,
then $\dim k[V]^H = \dim V - \dim H < \dim V -\dim G = \dim k[V]^G$, a contradiction.
So $\dim V \le \dim H$,  $(H, \car k, V)$ is in Table \ref{meta.small} or $V$ is the irreducible part of the adjoint module for $H$.

 First consider the case that $V$ is the irreducible part of the adjoint module for $H$. 
 As $H$ acts faithfully, $\car k$ is not special and $H$ is adjoint.  Lemma \ref{adj.sub} provides a contradiction.

Next consider the case that $H=\Sp(W)= \Sp_{2\ell}$ for some $\ell > 2$ and $V= L(\omega_2)$.  
Thus, $\dim V = \ell(2\ell -1) - \delta$ with $\delta =1$ or $2$.   Suppose that $G$ is not irreducible
on $W$.   Let $X$  be a $G$-composition factor of $W$ of maximal dimension $e$.   
 So $e \le 2\ell - 2$.   If $e = 2$, then the largest composition factor of $G$ on $\wedge^2 W$ is $4$-dimensional,
 a contradiction.   Otherwise, 
  $G$ has a nontrivial composition factor on $V$ of dimension at most $\dim \wedge^2X =  (\ell -2)(2\ell - 3) < \dim V$,
whence $G$ is not irreducible on $V$, a contradiction. 
 
  Also,  $GH_v$ is dense in $H$
 so $\dim G \ge \dim H - 3 \ell$, since $H_v$ is generically of dimension $3\ell$.   Now
apply Lemma \ref{l:codimsp} to deduce that $(G, H, V, \car k)$ are as in the 2nd or 4th lines of Table \ref{same.table}.
 
In the remaining cases from Table \ref{meta.small}, we have $d= \dim k[V]^H \le 2$.   If
$d=1$, then inspection of Tables \ref{meta.small}, \ref{meta.big}, and \ref{meta.few} show \eqref{same.SL}, \eqref{same.Sp}, or \eqref{same.SO} of the theorem hold or 
we are in the case of line 1 of Table \ref{same.table}.   So we may assume that $d =2$.  

 First assume that 
 $\dim V >  \dim G$  and so $d:= \dim k[V]^G = \dim V - \dim G > 0$.
Then by Theorem \ref{kVG}
one of the following holds: 
\begin{itemize}
\item  $G=A_1$, $ p \ne 2,3$, $V = L(\omega_4)$ with $\dim V = 5$; 
\item  $G=A_2$, $p \ne 2,3$, $V = L(3\omega_1)$ with $\dim V =10$; or
\item  $G=B_2$, $p=5$,  $V=L(\omega_1 + \omega_2)$ with $\dim V =12$.
\end{itemize}
  The possibilities for
$H$ with $\dim k[V]^H = 2$ are also given in Theorem \ref{kVG} and we see that
there are no examples.

Next consider the case that $\dim V \le \dim G$.   If $V$ is the nontrivial composition
factor of the adjoint module (and $\car k$ is not special for $G$) and $\dim k[V]^G \le 2$,
then $(G, p)$ appears in Table \ref{adj.small.table}.
Again, the possibilities for $H$ are all given in the tables and we see the only
examples are captured in the 2nd and 4th rows of Table \ref{same.table}.

Finally assume that $\dim V < \dim G$ and and we are not in the case
of the adjoint module.  Thus, $G$ and $H$ both occur in Table \ref{meta.small} and we
see that there are no containments (when $d=2$). 
\end{proof}

An immediate consequence of Theorem \ref{same.thm} is:

\begin{cor}
Under the hypotheses of Theorem \ref{same.thm}: If $G_v$ is finite for generic $v \in V$
or $\dim V > \dim G$, 
then $\car k \ne 0$ and $(G, H, V, \car k)$ are as in the last row of Table \ref{same.table}.
 \end{cor}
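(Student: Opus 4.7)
My plan is to deduce this from Theorem \ref{same.thm} by a case-by-case sieve. By the last sentence of Theorem \ref{summ.thm}, the conditions ``$G_v$ finite for generic $v$'' and ``$\dim V > \dim G$'' are equivalent for a faithful irreducible representation of a simple group, so it suffices to treat the second hypothesis alone.

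Applying Theorem \ref{same.thm}, I would rule out cases \eqref{same.SL}, \eqref{same.Sp}, and \eqref{same.SO} one by one. Case \eqref{same.SL} has $k[V]^G = k$, i.e., a dense $G$-orbit in $V$, which forces $\dim V \le \dim G$ and so contradicts the hypothesis. Case \eqref{same.Sp} exhibits $G = G_2$ with $\dim V = 6 < 14 = \dim G$. In case \eqref{same.SO}, $k[V]^G = k[q]$ gives $\dim k[V]^G = 1$, while $\dim V > \dim G$ forces $\dim k[V]^G = \dim V - \dim G$ by Theorem \ref{summ.thm}\eqref{summ.big}; combining these, $\dim V = \dim G + 1$. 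An inspection of Tables C and D of \cite{GG:simple} (equivalently, of the entries of Table \ref{meta.big} with $\dim V - \dim G = 1$ and $k[V]^G$ polynomial in a single degree-$2$ generator) shows that no such instance occurs.

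It then remains to examine the six rows of Table \ref{same.table}. Direct dimension counts dispose of rows 1--5: $\PGL_3 < G_2$ gives $\dim V = 7 < 8 = \dim G$; $G_2 < \Sp_6$ on $L(\omega_2)$ gives $14 = 14$; $\Spin_{11} < \HSpin_{12}$ on the half-spin module gives $32 < 55$; for $\SO_{2n} < \Sp_{2n}$ with $n \ge 3$, $\dim V \le 2n^2 - n - 1 < 2n^2 - n = \dim G$; and for $\SO_8$ or $\Sp_8$ inside $F_4$ on the $26$-dimensional module, $26 < 28 \le \dim G$. Only the final row, $\SL_n < \SL_n \otimes \SL_n$ acting on $k^n \otimes k^n$, satisfies $\dim V = n^2 > n^2 - 1 = \dim G$, and this row already carries the required restriction $\car k \ne 0$.

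The one step requiring more than routine arithmetic is case \eqref{same.SO}, whose exclusion rests on the external classification from \cite{GG:simple}; every other verification is immediate from the tables in this paper.
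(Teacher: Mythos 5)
Your overall strategy---reduce to the single hypothesis $\dim V > \dim G$ via the last sentence of Theorem \ref{summ.thm}, then sieve the cases of Theorem \ref{same.thm} by dimension counts---is the natural one; the paper offers no argument at all (it declares the corollary an immediate consequence of Theorem \ref{same.thm}), and your treatment of cases \eqref{same.SL} and \eqref{same.Sp} and of rows 1--5 of Table \ref{same.table} is correct.

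However, your disposal of case \eqref{same.SO} rests on a false assertion. There \emph{is} an instance of $G < \SO(V)$ with $k[V]^G = k[q]$ for a quadratic form $q$ and $\dim V = \dim G + 1$: take $G = \SL_2$ acting on $V = L(\omega_1 + p^e\omega_1) = k^2 \otimes (k^2)^{[p^e]}$, so $\dim V = 4$ and $\dim G = 3$. Identifying $V$ with $2\times 2$ matrices, the determinant is a nondegenerate $G$-invariant quadratic form, $G \le \SL_2 \otimes \SL_2 = \SO_4$, and $k[V]^G = k[\det]$. This representation is listed in Table \ref{meta.big} (see the caption of Table \ref{same.table}) and is precisely the $n=2$ instance of the last row of Table \ref{same.table}, whose ``degrees'' entry is $n=2$, i.e., a single quadratic generator. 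So the inspection you invoke would in fact turn this case up; the point is that cases \eqref{same.SL}--\eqref{same.SO} of Theorem \ref{same.thm} are not mutually exclusive with case (4), and case \eqref{same.SO} overlaps the last row when $n=2$. The corollary's conclusion survives because every instance of case \eqref{same.SO} with $\dim V > \dim G$ coincides with such an $n=2$ instance, but your argument must establish this rather than claim the case is vacuous. Concretely: $\dim V = \dim G + 1$ together with self-duality of $V$ forces type $A_1$ and $V = W \otimes W^{[p^e]}$ (for $\SL_{\ell+1}$ with $\ell \ge 2$ the modules $W \otimes W^{[q]}$ and $W \otimes (W^*)^{[q]}$ are not self-dual, and a check of the remaining types shows no faithful irreducible module of dimension $\dim G + 1$ at all), which lands you back in the last row of Table \ref{same.table} with $\car k \ne 0$, as required.
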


\begin{cor}  Let $V$ be a faithful and irreducible representation of a simple algebraic group $G$.
Let $e$ be the greatest common divisor of the degrees of the homogeneous elements of $k[V]^G$.  
Assume that $(G,V)$ is not given (up to twist) in the statement of Theorem \ref{same.thm}.   If $m$ is a sufficiently
large multiple of $e$, then for almost all homogeneous $f \in k[V]^G$ of degree $m$,  $G$ is the identity
component of the stabilizer of $f$.
\end{cor}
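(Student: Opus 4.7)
For a nonzero homogeneous $f \in k[V]^G$, let $H_f$ denote the identity component of the stabilizer of $f$ in $\SL(V)$; then $G \subseteq H_f$, and since $G$ acts irreducibly on $V$ so does $H_f$.  A Lie--Kolchin argument applied to the normal unipotent radical $R_u(H_f)$ (its nonzero fixed subspace in $V$ is $H_f$-stable, hence all of $V$) shows that $R_u(H_f) = 1$, so $H_f$ is connected reductive.

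The plan is to prove that the ``bad'' locus
\[
B_m := \{f \in (k[V]^G)_m : H_f \supsetneq G\}
\]
is a proper closed subset of $(k[V]^G)_m$ whenever $m$ is a sufficiently large multiple of $e$.  The first step will be to establish finiteness: the connected reductive subgroups of $\SL(V)$ containing $G$ form a finite set.  Indeed, any such subgroup is semisimple and acts irreducibly on $V$, so it is a central product of simple factors whose faithful irreducible modules tensor to $V$; the bound on $\dim V$ leaves only finitely many isomorphism types, each determining a single $\SL(V)$-conjugacy class, and within each class only finitely many overgroups of $G$ appear by rigidity of embeddings of reductive groups.  Enumerate the connected reductive subgroups of $\SL(V)$ strictly containing $G$ as $H^{(1)}, \dots, H^{(N)}$.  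For each $i$, since $(G,V)$ is not in the exceptional list, Theorem \ref{same.thm} gives $\dim k[V]^{H^{(i)}} < \dim k[V]^G$, and a Hilbert series comparison for finitely generated graded rings yields $\dim (k[V]^{H^{(i)}})_m < \dim (k[V]^G)_m$ for all $m$ sufficiently large.  Because each $H^{(i)}$ is connected, $H_f \supseteq H^{(i)}$ iff $f \in (k[V]^{H^{(i)}})_m$; hence
\[
B_m = \bigcup_{i=1}^N (k[V]^{H^{(i)}})_m
\]
is a finite union of proper linear subspaces of $(k[V]^G)_m$, hence a proper closed subset.  For $m$ a sufficiently large multiple of $e$, $(k[V]^G)_m$ is nonzero, and any $f$ in the dense open complement $(k[V]^G)_m \setminus B_m$ satisfies $H_f = G$, as $H_f$ is then a connected reductive subgroup containing $G$ that is not a proper overgroup.

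The hard part will be the finiteness claim.  The tensor-decomposition argument for $\SL(V)$-conjugacy classes of irreducibly acting connected reductive subgroups is standard, but bounding the number of overgroups of $G$ within a fixed conjugacy class requires rigidity of the embeddings $G \hookrightarrow H^{(i)}$, which in bad characteristic may demand careful analysis of the Hom scheme $\Hom(G, H^{(i)})$ or recourse to detailed subgroup classification results.
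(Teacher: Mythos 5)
Your proof has exactly the same skeleton as the paper's: reduce to showing that the bad locus in degree $m$ is a finite union of the subspaces $(k[V]^{H})_m$ for the finitely many closed connected overgroups $H$ of $G$ in $\SL(V)$, use Theorem \ref{same.thm} to get $\dim k[V]^{H} < \dim k[V]^{G}$ for each such $H$, and conclude by a growth comparison of graded pieces that each $(k[V]^{H})_m$ is a proper subspace once $m$ is a large multiple of $e$. The one genuine issue is the finiteness claim, which you correctly flag as the hard part but whose sketch does not hold up: it is not true that each isomorphism type of irreducibly acting semisimple group determines a single $\SL(V)$-conjugacy class (a group can have several inequivalent irreducible modules of the same dimension), and, more seriously, ``rigidity of embeddings of reductive groups'' does not by itself bound the number of overgroups of $G$ lying in a fixed conjugacy class --- that set is a quotient of $\{x \in \SL(V) : G \subseteq xHx^{-1}\}$ by $N_{\SL(V)}(H)$, and its finiteness is precisely the nontrivial content. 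You do not need a Hom-scheme analysis here: the statement that a simple algebraic group acting irreducibly on $V$ lies in only finitely many closed connected subgroups of $\SL(V)$ is a theorem of Liebeck and Testerman \cite{LiebeckTest} (see also \cite[Prop.~9.2]{GG:simple}), valid in all characteristics, and this is exactly what the paper cites. With that citation substituted for your finiteness sketch, your argument is complete; your preliminary observation that $H_f$ is reductive is correct but not needed once connectedness suffices to invoke that reference.
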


\begin{proof}  There are only finitely many closed subgroups $H$ of $\SL(V)$ containing $G$
(see \cite{LiebeckTest} or  \cite[Prop.~9.2]{GG:simple}) and by the previous result $\dim k[V]^H < \dim k[V]^G$.
Thus, for $m$ sufficiently large (and a multiple of $e$),  the set of $f \in k[V]^G$ that
are homogeneous whose
stabilizer has connected component strictly containing $G$ is a finite union of proper subspaces
of the degree $m$ invariants of $G$, whence the result.
\end{proof}

In fact using results of Seitz and Testerman and others (see \cite{seitzmem}, \cite{BurnGMT, BurnGT, BurnMT}), for most
$(G,V)$ it is the case that $G$ is maximal in the corresponding classical group ($\SO(V), \Sp(V)$,
or $\SL(V)$) and so any homogeneous  $G$-invariant (other than a scalar times a power of
the invariant quadratic form in the case $G < \SO(V)$) has stabilizer whose connected component
is $G$.   See \cite{GG:simple} for many examples of this, e.g., if $G=E_8$ and $V$ is the adjoint module.


\providecommand{\bysame}{\leavevmode\hbox to3em{\hrulefill}\thinspace}
\providecommand{\MR}{\relax\ifhmode\unskip\space\fi MR }
\providecommand{\MRhref}[2]{%
  \href{http://www.ams.org/mathscinet-getitem?mr=#1}{#2}
}
\providecommand{\href}[2]{#2}


\begin{thebibliography}{KMRT98}

\bibitem[ABS90]{ABS}
H.~Azad, M.~Barry, and G.~Seitz, \emph{On the structure of parabolic
  subgroups}, Comm. Algebra \textbf{18} (1990), no.~2, 551--562.

\bibitem[Agr08]{Agricola}
I.~Agricola, \emph{Old and new on the exceptional group ${G}_2$}, Notices of
  the AMS \textbf{55} (2008), no.~8, 922--929.

\bibitem[AP71]{AndreevPopov}
E.M. Andreev and V.L. Popov, \emph{Stationary subgroups of points of general
  position in the representation space of a semisimple {L}ie group}, Functional
  Anal. Appl. \textbf{5} (1971), no.~4, 265--271.

\bibitem[BGL14]{BGL}
H.~Bermudez, S.~Garibaldi, and V.~Larsen, \emph{Linear preservers and
  representations with a 1-dimensional ring of invariants}, Trans. Amer. Math.
  Soc. \textbf{366} (2014), no.~9, 4755--4780.

\bibitem[BGMT15]{BurnGMT}
T.~Burness, S.~Ghandour, C.~Marion, and D.~Testerman, \emph{Irreducible almost
  simple subgroups of classical algebraic groups}, Mem. Amer. Math. Soc.
  \textbf{236} (2015), no.~1114.

\bibitem[BGS17]{BGS}
T.~Burness, R.~Guralnick, and J.~Saxl, \emph{On base sizes for algebraic
  groups}, J. Eur. Math. Soc. (JEMS) \textbf{19} (2017), no.~8, 2269--2341.

\bibitem[BGT16]{BurnGT}
T.~Burness, S.~Ghandour, and D.~Testerman, \emph{Irreducible geometric subgroup
  of classical algebraic groups}, Mem. Amer. Math. Soc. \textbf{239} (2016),
  no.~1130.

\bibitem[BMT17]{BurnMT}
T.~Burness, C.~Marion, and D.~Testerman, \emph{On irreducible subgroups of
  simple algebraic groups}, Math. Ann. \textbf{367} (2017), no.~3--4,
  1259--1309.

\bibitem[Bou02]{Bou:g4}
N.~Bourbaki, \emph{Lie groups and {L}ie algebras: Chapters 4--6},
  Springer-Verlag, Berlin, 2002.

\bibitem[Bou05]{Bou:g7}
\bysame, \emph{Lie groups and {L}ie algebras: Chapters 7--9}, Springer-Verlag,
  Berlin, 2005.

\bibitem[BT72]{BoTi:c}
A.~Borel and J.~Tits, \emph{Compl\'{e}ments \`{a} l'article: Groupes
  r\'{e}ductifs}, Publ. Math. IHES \textbf{41} (1972), 253--276.

\bibitem[CGP15]{CGP2}
B.~Conrad, O.~Gabber, and G.~Prasad, \emph{Pseudo-reductive groups}, 2nd ed.,
  Cambridge University Press, 2015.

\bibitem[CR10]{ChaputRomagny}
P.-E. Chaput and M.~Romagny, \emph{On the adjoint quotient of {C}hevalley
  groups over arbitrary base schemes}, J. Inst. Math. Jussieu \textbf{9}
  (2010), no.~4, 673--704.

\bibitem[CRE16]{CRElduque}
A.~Castillo-Ramirez and A.~Elduque, \emph{Some special features of {C}ayley
  algebras, and ${G}_2$, in low characteristics}, J. Pure Appl. Algebra
  \textbf{220} (2016), 1188--1205.

\bibitem[CS50]{ChevSchaf}
C.~Chevalley and R.D. Schafer, \emph{The exceptional simple {L}ie algebras
  {$F_4$} and {$E_6$}}, Proc. Nat. Acad. Sci. U. S. A. \textbf{36} (1950),
  137--141. \MR{0034378 (11,577b)}

\bibitem[Dem73]{Dem:inv}
M.~Demazure, \emph{Invariants sym{\'e}triques entiers des groupes de {W}eyl et
  torsion}, Invent. Math. \textbf{21} (1973), 287--301.

\bibitem[DG70]{SGA3.2}
M.~Demazure and A.~Grothendieck, \emph{Sch{\'{e}}mas en groupes {II}: {G}roupes
  de type multiplicatif, et structure des sch{\'{e}}mas en groupes
  g{\'{e}}n{\'{e}}raux}, Lecture Notes in Mathematics, vol. 152, Springer,
  1970.

\bibitem[DK15]{DerksenKemper}
H.~Derksen and G.~Kemper, \emph{Computational invariant theory}, 2nd ed.,
  Encyclopedia of Mathematical Sciences, no. 130, Springer, 2015.

\bibitem[Eng00]{Engel}
F.~Engel, \emph{Ein neues, dem linearen {K}omplexe analoges {G}ebilde},
  Berichte {\"{u}}ber die Verhandlungen der K{\"{o}}niglich S{\"{a}}chsischen
  Gesellschaft der Wissenschaften zu Leipzig. Mathematisch-Physische Klasse
  \textbf{52} (1900), 63--76, 220--239.

\bibitem[FH91]{FH}
W.~Fulton and J.~Harris, \emph{Representation theory: a first course}, Graduate
  texts in mathematics, vol. 129, Springer, 1991.

\bibitem[Gar09a]{G:lens}
S.~Garibaldi, \emph{Cohomological invariants: exceptional groups and spin
  groups}, Memoirs Amer. Math. Soc., no. 937, Amer. Math. Soc., Providence, RI,
  2009, with an appendix by Detlev W. Hoffmann.

\bibitem[Gar09b]{G:vanish}
\bysame, \emph{Vanishing of trace forms in low characteristic}, Algebra \&
  Number Theory \textbf{3} (2009), no.~5, 543--566, with an appendix by A.
  Premet.

\bibitem[GG07]{GoGu}
D.~Goldstein and R.~Guralnick, \emph{Alternating forms and self-adjoint
  operators}, J. Algebra \textbf{308} (2007), no.~1, 330--349.

\bibitem[GG15]{GG:simple}
S.~Garibaldi and R.M. Guralnick, \emph{Simple groups stabilizing polynomials},
  Forum of Mathematics: Pi \textbf{3} (2015), e3 (41 pages).

\bibitem[GG16]{GG:edp}
\bysame, \emph{Essential dimension of algebraic groups, including bad
  characteristic}, Arch. Math. \textbf{107} (2016), 101--119.

\bibitem[GG17]{GG:spin}
\bysame, \emph{Spinors and essential dimension}, Compositio Math. \textbf{153}
  (2017), 535--556, with an appendix by A. Premet.

\bibitem[GG20a]{GG:large}
\bysame, \emph{Generically free representations {I}: Large representations},
  Algebra \& Number Theory \textbf{14} (2020), no.~6, 1577--1611.

\bibitem[GG20b]{GG:irred}
\bysame, \emph{Generically free representations {II}: {I}rreducible
  representations}, Transform. Groups \textbf{25} (2020), 793--817.

\bibitem[GG20c]{GG:special}
\bysame, \emph{Generically free representations {III}: {E}xtremely bad
  characteristic}, Transform. Groups \textbf{25} (2020), 819--841.

\bibitem[GL]{GurLawther}
R.M. Guralnick and R.~Lawther, \emph{Generic stabilizers in actions of simple
  algebraic groups}, combination of arxiv:1904.13375 and arxiv:1904.13382.

\bibitem[GN16]{GN}
S.~Garibaldi and D.K. Nakano, \emph{Bilinear and quadratic forms on rational
  modules of split reductive groups}, Canad. J. Math. \textbf{68} (2016),
  no.~2, 395--421.

\bibitem[Gur64]{Gurevich}
G.B. Gurevich, \emph{Foundations of the theory of algebraic invariants}, P.
  Noordhoff Ltd., Groningen, 1964, Translated by J.R.M. Radok and A.J.M.
  Spencer.

\bibitem[His84]{Hiss}
G.~Hiss, \emph{Die adjungierten {D}arstellungen der {C}hevalley-{G}ruppen},
  Arch. Math. (Basel) \textbf{42} (1984), 408--416.

\bibitem[Hog82]{Hogeweij}
G.M.D. Hogeweij, \emph{Almost-classical {L}ie algebras. {I}, {II}}, Nederl.
  Akad. Wetensch. Indag. Math. \textbf{44} (1982), no.~4, 441--460.

\bibitem[How95]{Howe:wmf}
R.~Howe, \emph{Perspectives on invariant theory}, The Schur Lectures (1992),
  Bar-Ilan University, 1995.

\bibitem[Hum67]{Hum:p}
J.E. Humphreys, \emph{Algebraic groups and modular {L}ie algebras}, Memoirs of
  the American Mathematical Society, No. 71, American Mathematical Society,
  Providence, R.I., 1967.

\bibitem[Hum81]{Hum:LAG}
\bysame, \emph{Linear algebraic groups}, second ed., Graduate Texts in
  Mathematics, vol.~21, Springer, 1981.

\bibitem[Jac68]{Jac:J}
N.~Jacobson, \emph{Structure and representations of {J}ordan algebras}, Coll.\
  Pub., vol.~39, Amer. Math. Soc., Providence, RI, 1968.

\bibitem[Jac71]{Jac:ex}
\bysame, \emph{Exceptional {L}ie algebras}, Lecture notes in pure and applied
  mathematics, vol.~1, Marcel-Dekker, New York, 1971.

\bibitem[Jan03]{Jantzen}
J.C. Jantzen, \emph{Representations of algebraic groups}, second ed., Math.
  Surveys and Monographs, vol. 107, Amer. Math. Soc., 2003.

\bibitem[Jan04]{Jantzen:nil}
\bysame, \emph{Nilpotent orbits in representation theory}, Lie theory, Progr.
  Math., vol. 228, Birkh\"{a}user Boston, Boston, MA, 2004, pp.~1--211.

\bibitem[Kem80]{Kempf:some}
G.~Kempf, \emph{Some quotient surfaces are smooth}, Michigan Math. J.
  \textbf{27} (1980), no.~3, 295--299.

\bibitem[KM97]{KemperMalle}
G.~Kemper and G.~Malle, \emph{The finite irreducible linear groups with
  polynomial ring of invariants}, Transform. Groups \textbf{2} (1997), no.~1,
  57--89.

\bibitem[KMRT98]{KMRT}
M.-A. Knus, A.S. Merkurjev, M.~Rost, and J.-P. Tignol, \emph{The book of
  involutions}, Colloquium Publications, vol.~44, Amer.\ Math.\ Soc., 1998.

\bibitem[KPV76]{KPV}
V.G. Kac, V.L. Popov, and E.B. Vinberg, \emph{Sur les groupes lin\'eaires
  alg\'ebriques dont l'alg\`ebre des invariants est libre}, C. R. Acad. Sci.
  Paris S\'er. A-B \textbf{283} (1976), no.~12, Ai, A875--A878.

\bibitem[Lev09]{Levy:Vinberg}
P.~Levy, \emph{Vinberg's {$\theta$}-groups in positive characteristic and
  {K}ostant-{W}eierstrass slices}, Transform. Groups \textbf{14} (2009), no.~2,
  417--461.

\bibitem[LM17]{LoetscherMacD}
R.~L\"{o}tscher and M.~MacDonald, \emph{The slice method for ${G}$-torsors},
  Adv. Math. \textbf{320} (2017), 329--360.

\bibitem[L{\"{o}}t15]{Loetscher:edsep}
R.~L{\"{o}}tscher, \emph{Essential dimension of separable algebras embedding in
  a fixed central simple algebra}, Documenta Math. (2015), 443--459, Extra
  volume dedicated to A. Merkurjev.

\bibitem[LPS90]{LPS:fact}
M.W. Liebeck, C.E. Praeger, and J.~Saxl, \emph{The maximal factorizations of
  the finite simple groups and their automorphism groups}, Mem. Amer. Math.
  Soc. \textbf{86} (1990), no.~432, iv+151.

\bibitem[LR79]{LunaRichardson}
D.~Luna and R.W. Richardson, \emph{A generalization of the {C}hevalley
  restriction theorem}, Duke Math. J. \textbf{46} (1979), no.~3, 487--496.

\bibitem[LT04]{LiebeckTest}
M.W. Liebeck and D.~Testerman, \emph{Irreducible subgroups of algebraic
  groups}, Q. J. Math. \textbf{55} (2004), no.~1, 47--55.

\bibitem[L{\"u}b01]{luebeck}
F.~L{\"u}beck, \emph{Small degree representations of finite {C}hevalley groups
  in defining characteristic}, LMS J. Comput. Math. \textbf{4} (2001),
  135--169.

\bibitem[McC70]{McC:FSTrev}
K.~McCrimmon, \emph{The {F}reudenthal-{S}pringer-{T}its constructions
  revisited}, Trans. Amer. Math. Soc. \textbf{148} (1970), 293--314.

\bibitem[Mil17]{Milne:AG}
J.S. Milne, \emph{Algebraic groups: the theory of group schemes of finite type
  over a field}, Cambridge studies in Advanced Math., vol. 170, Cambridge
  University Press, 2017.

\bibitem[Nak79]{Nakajima}
H.~Nakajima, \emph{Invariants of finite groups generated by pseudo-reflections
  in positive characteristic}, Tsukuba J. Math. \textbf{3} (1979), no.~1,
  109--122.

\bibitem[Pet15]{Ptr:embed}
H.P. Petersson, \emph{An embedding theorem for reduced {A}lbert algebras over
  arbitrary fields}, Comm. Alg. \textbf{43} (2015), 2062--2088.

\bibitem[Pet18]{Ptr:surv}
\bysame, \emph{A survey on {A}lbert algebras}, Transf. Groups \textbf{24}
  (2018), no.~1, 219--278.

\bibitem[Pop72]{Popov:stability}
V.L. Popov, \emph{On the stability of the action of an algebraic group on an
  algebraic variety}, Math. USSR Izv. \textbf{6} (1972), no.~2, 367--379,
  [Russian original: Izv. Akad. Nauk SSSR Ser. Mat. {\bf 36} (1972)].

\bibitem[Pop80]{Popov:14}
\bysame, \emph{Classification of spinors of dimension 14}, Trans. Moscow Math.
  Soc. (1980), no.~1, 181--232.

\bibitem[Pop94]{Popov:sec}
\bysame, \emph{Sections in invariant theory}, The Sophus Lie Memorial
  Conference (Oslo, 1992), Scand. Univ. Press, Oslo, 1994, pp.~315--361.

\bibitem[PS83]{PremetSup}
A.A. Premet and I.D. Suprunenko, \emph{The {W}eyl modules and the irreducible
  representations of the symplectic group with the fundamental highest
  weights}, Comm. Algebra \textbf{11} (1983), no.~12, 1309--1342.

\bibitem[PV94]{PoV}
V.L. Popov and E.B. Vinberg, \emph{Invariant theory}, Encyclopedia of
  Mathematical Sciences, vol.~55, pp.~123--284, Springer-Verlag, 1994.

\bibitem[Ric72]{Richardson:prin}
R.W. Richardson, \emph{Principal orbit types for algebraic transformation
  spaces in characteristic zero}, Invent. Math. \textbf{16} (1972), 6--14.

\bibitem[Ric77]{Richardson:Matsushima}
\bysame, \emph{Affine coset spaces of reductive algebraic groups}, Bull. London
  Math. Soc. \textbf{9} (1977), no.~1, 38--41.

\bibitem[RLYG12]{RLYG}
M.~Reeder, P.~Levy, J.-K. Yu, and B.H. Gross, \emph{Gradings of positive rank
  on simple {L}ie algebras}, Transform. Groups \textbf{17} (2012), no.~4,
  1123--1190.

\bibitem[R{\"o}h93]{Roe:extra}
G.~R{\"o}hrle, \emph{On extraspecial parabolic subgroups}, Linear algebraic
  groups and their representations (Los Angeles, CA, 1992), Contemp. Math.,
  vol. 153, Amer. Math. Soc., Providence, RI, 1993, pp.~143--155.

\bibitem[Sei87]{seitzmem}
G.M. Seitz, \emph{The maximal subgroups of classical algebraic groups}, Mem.
  Amer. Math. Soc. \textbf{67} (1987), no.~365.

\bibitem[Ses77]{Sesh:GR}
C.S. Seshadri, \emph{Geometric reductivity over arbitrary base}, Adv. Math.
  \textbf{26} (1977), 225--274.

\bibitem[SK77]{SK}
M.~Sato and T.~Kimura, \emph{A classification of irreducible prehomogeneous
  vector spaces and their relative invariants}, Nagoya Math. J. \textbf{65}
  (1977), 1--155.

\bibitem[SS70]{SpSt}
T.A. Springer and R.~Steinberg, \emph{Conjugacy classes}, Seminar on Algebraic
  Groups and Related Finite Groups (The Institute for Advanced Study,
  Princeton, N.J., 1968/69), Lecture Notes in Math., vol. 131, Springer,
  Berlin, 1970, [= \emph{Robert Steinberg: collected papers}, pp.~293--394],
  pp.~167--266.

\bibitem[Ste63]{St:rep}
R.~Steinberg, \emph{Representations of algebraic groups}, Nagoya Math. J.
  \textbf{22} (1963), 33--56, [= Collected Papers, pp.~149--172].

\bibitem[Ste75]{St:tor}
\bysame, \emph{Torsion in reductive groups}, Adv. Math. \textbf{15} (1975),
  no.~1, 63--92, [= Collected Papers, pp.~415--444].

\bibitem[Ste16]{Stewart:min}
D.I. Stewart, \emph{On the minimal modules for exceptional {L}ie algebras:
  {J}ordan blocks and stabilizers}, LMS J. Comput. Math. \textbf{19} (2016),
  no.~1, 235--258.

\bibitem[SV00]{Sp:ex}
T.A. Springer and F.D. Veldkamp, \emph{Octonions, {J}ordan algebras and
  exceptional groups}, Springer-Verlag, Berlin, 2000.

\bibitem[TZ15]{TestermanZalesski}
D.M. Testerman and A.E. Zalesski, \emph{Subgroups of simple algebraic groups
  containing regular tori, and irreducible representations with multiplicity 1
  non-zero weights}, Transform. Groups \textbf{20} (2015), no.~3, 831--861.

\bibitem[Vin76]{Vinberg:weyl}
E.B. Vinberg, \emph{The {W}eyl group of a graded {L}ie algebra}, Math. USSR
  Izv. \textbf{10} (1976), 463--495.

\bibitem[ZS87]{ZalesskiSuprunenko}
A.E. Zalesski\u{\i} and I.D. Suprunenko, \emph{Representations of dimension
  {$(p^n\pm 1)/2$} of the symplectic group of degree {$2n$} over a field of
  characteristic {$p$}}, Vests\={\i} Akad. Navuk BSSR Ser. F\={\i}z.-Mat. Navuk
  (1987), no.~6, 9--15, 123.

\end{thebibliography}
\end{document}